\DeclareMathOperator{\T}{T}
\DeclareMathOperator{\rk}{rk}
\DeclareMathOperator{\Aut}{Aut}
\DeclareMathOperator{\id}{id}
\DeclareMathOperator{\Pic}{Pic}
\DeclareMathOperator{\NS}{NS}
\DeclareMathOperator{\MW}{MW}
\DeclareMathOperator{\TMW}{TMW}
\DeclareMathOperator{\Tors}{Tors}
\DeclareMathOperator{\Or}{O}
\title{K3 Surfaces of zero entropy admitting an elliptic fibration with only irreducible fibers}
\author{Giacomo Mezzedimi}
\address{Institut f\"ur Algebraische Geometrie, Leibniz Universit\"at Hannover, Welfengarten 1, 30167 Hannover, Germany.}
\email{mezzedimi@math.uni-hannover.de}
\date{\today}
\begin{document}

\newcommand{\der}{\partial}
\newcommand{\derc}{\overline{\partial}}
\newcommand{\Q}{\mathbb{Q}}
\newcommand{\R}{\mathbb{R}}
\newcommand{\Z}{\mathbb{Z}}
\newcommand{\C}{\mathbb{C}}
\newcommand{\Hyp}{\mathbb{H}}
\newcommand{\N}{\mathbb{N}}
\newcommand{\A}{\mathbb{A}}
\newcommand{\F}{\mathbb{F}}
\newcommand{\Prj}{\mathbb{P}}
\newcommand{\Aff}{\mathbb{A}}
\newcommand{\eps}{\varepsilon}
\newcommand{\Sc}{\mathscr{S}}
\newcommand{\Ac}{\mathscr{A}}
\newcommand{\Lc}{\mathcal{L}}
\newcommand{\Rc}{\mathcal{R}}
\newcommand{\Fc}{\mathscr{F}}
\newcommand{\Qc}{\mathscr{Q}}
\newcommand{\Ec}{\mathscr{E}}
\newcommand{\Gc}{\mathscr{G}}
\newcommand{\Jcc}{\mathscr{J}}
\newcommand{\Uc}{\mathcal{U}}
\newcommand{\Mvar}{\mathcal{M}}
\newcommand{\Cc}{\mathcal{C}}
\newcommand{\Ccc}{\mathscr{C}}
\newcommand{\Hc}{\mathscr{H}}
\newcommand{\Kc}{\mathcal{K}}
\newcommand{\Ic}{\mathcal{I}}
\newcommand{\Jc}{\mathcal{J}}
\newcommand{\Icc}{\mathscr{I}}
\newcommand{\mf}{\mathfrak{m}}
\newcommand{\Ol}{\mathcal{O}}
\newcommand{\El}{\mathcal{E}}
\newcommand{\smallvdots}{\vphantom{\int\limits^x}\smash{\vdots}}
\newcommand{\smallddots}{\vphantom{\int\limits^x}\smash{\ddots}}
\newcommand*{\longhookrightarrow}{\ensuremath{\lhook\joinrel\relbar\joinrel\rightarrow}}
\newcommand*\circled[1]{\tikz[baseline=(char.base)]{
            \node[shape=circle,draw,solid,inner sep=2pt] (char) {#1};}}
\newcommand{\rsim}{\rotatebox[origin=c]{-90}{$\sim$}}
\newcommand{\sdots}{\reflectbox{\ddots}}
\newcommand{\Mod}[1]{\ (\mathrm{mod}\ #1)}

\newtheorem{teo}{Theorem}[section]
\newtheorem{prop}[teo]{Proposition}
\newtheorem{lemma}[teo]{Lemma}
\newtheorem*{lemma*}{Lemma}
\newtheorem{cor}[teo]{Corollary}
\newtheorem*{esercizio}{Esercizio}
\newtheorem{prob}[teo]{Problem}
\theoremstyle{definition}
\newtheorem*{ack}{\textbf{Ackowledgments}}
\newtheorem*{conv}{\textbf{Conventions}}
\newtheorem{defn}[teo]{Definition}
\newtheorem{alg}[teo]{Algorithm}
\theoremstyle{remark}
\newtheorem{oss}[teo]{Remark}
\newtheorem{question}[teo]{Question}
\newtheorem{ossi}[teo]{Remarks}
\newtheorem{esempio}[teo]{Example}
\newtheorem{esempi}[teo]{Examples}

\maketitle

\begin{abstract}
We classify complex K3 surfaces of zero entropy admitting an elliptic fibration with only irreducible fibers. These surfaces are characterized by the fact that they admit a unique elliptic fibration with infinite automorphism group. We furnish an explicit list of $32$ N\'eron-Severi lattices corresponding to such surfaces. Incidentally, we are able to decide which of these $32$ classes of surfaces admit a unique elliptic pencil. Finally, we prove that all K3 surfaces with Picard rank $\ge 19$ and infinite automorphism group have positive entropy. 
\end{abstract}

\section*{Introduction}

Let $X$ be a smooth projective K3 surface over an algebraically closed field. The study of the group $\Aut(X)$ of automorphisms of $X$ is a central topic at the intersection of algebraic, arithmetic and differential geometry. Since the early works by Nikulin \cite{nikulin10}, Kond\=o \cite{kondo} and Vinberg \cite{vinberg}, many have tried to understand explicitly the structure of the group $\Aut(X)$ using very different approaches. A very successful approach in the last 20 years has been via complex dynamics and entropy, pioneered by Cantat \cite{cantat3} and McMullen \cite{mcmullen1}. Our aim is to combine this with the huge lattice-theoretical machinery classically used to study K3 surfaces.\newline

The first step towards the understanding of the group $\Aut(X)$ was made by Nikulin \cite{nikulin10}, Vinberg \cite{vinberg07} and Kond\=o \cite{kondo}: Nikulin classified the N\'eron-Severi lattices of complex K3 surfaces with a finite automorphism group for Picard rank $\rho(X)\ne 4$, and Vinberg completed the classification in Picard rank $4$. Their work relies on the theory of lattices developed by Nikulin in the 70's. However, when the automorphism group becomes infinite, very little is known. For example, we can describe the full automorphism group only of some K3 surfaces (see Vinberg's examples \cite{vinberg} or Shimada's recent algorithm \cite{shimada1}).

Our goal is to identify a class of complex K3 surfaces with an infinite but simple automorphism group. Let $C\subseteq X$ be an elliptic curve. Then we can consider the subgroup $\Aut(X,|C|)<\Aut(X)$ of automorphisms of $X$ preserving the elliptic pencil $|C|$. $\Aut(X,|C|)$ can be related to the group of automorphisms of the generic member $C_\eta$ of the elliptic pencil $|C|$, and is in general very well understood. Hence a natural approach is to relate $\Aut(X)$ to the groups $\Aut(X,|C|)$, with $C$ varying among the elliptic curves on $X$. This can be rephrased in terms of \textit{entropy} of automorphisms: a result by Cantat \cite{cantat2} shows that the ``most regular'' automorphisms (said of \textit{zero entropy}) of a K3 surface are either the periodic ones or those preserving some elliptic pencil. Hence $\Aut(X)$ can be understood from the groups $\Aut(X,|C|)$ when all automorphisms of $X$ have zero entropy. A K3 surface with this property is said \textit{of zero entropy}, otherwise $X$ is said to have \textit{positive entropy}, and it is believed that K3 surfaces with positive entropy have a much more complicated automorphism group.

A more precise description than the one above is the following:

\begin{teo}[\cite{oguiso1}] \label{teo:introfac}
A smooth complex projective K3 surface with $|\Aut(X)|=\infty$ has zero entropy if and only if $\Aut(X)$ coincides with $\Aut(X,|C|)$ for a certain elliptic curve $C\subseteq X$.
\end{teo}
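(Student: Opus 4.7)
The plan is to translate the statement to the action of $\Aut(X)$ on the N\'eron--Severi lattice via Gromov--Yomdin, which identifies the topological entropy of $f\in\Aut(X)$ with $\log\lambda(f^*)$, where $\lambda(\cdot)$ is the spectral radius of $f^*$ on $\NS(X)_\R$. Because $\NS(X)$ has Lorentzian signature $(1,\rho(X)-1)$, each infinite-order isometry falls into exactly one of two classes: \emph{parabolic} (spectral radius $1$, unipotent, with a unique fixed isotropic ray) or \emph{loxodromic} (spectral radius $>1$, with two isotropic eigenlines). Consequently, an infinite-order automorphism has zero entropy if and only if $f^*$ is parabolic, and its fixed isotropic ray is then spanned by a primitive integral class which, via Riemann--Roch and the standard analysis of effective isotropic classes on a K3 surface, can be represented by a smooth elliptic curve $C\subset X$ defining an elliptic pencil $|C|$.

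For the easy direction, assume $\Aut(X)=\Aut(X,|C|)$. Every $h\in\Aut(X)$ then satisfies $h^*[C]=[C]$, so $h^*$ fixes a nonzero isotropic vector with eigenvalue $1$. A loxodromic isometry of a Lorentzian lattice has eigenvalues $\mu$ and $\mu^{-1}\neq 1$ on its two isotropic eigenlines, so $h^*$ cannot be loxodromic; hence $\lambda(h^*)=1$ and every $h$ has zero entropy.

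For the converse, assume $X$ has zero entropy and $|\Aut(X)|=\infty$. Since the kernel of $\Aut(X)\to O(\NS(X))$ is finite (global Torelli) and an infinite subgroup of $\GL_n(\Z)$ contains an element of infinite order (Selberg's lemma), we pick some $f\in\Aut(X)$ of infinite order. By the dichotomy above $f^*$ is parabolic, with unique fixed isotropic ray $\R_{\ge 0}[C]$ for a primitive elliptic fiber class $[C]$. The heart of the argument is to show that every $h\in\Aut(X)$ satisfies $h^*[C]=[C]$. First, for any infinite-order $g\in\Aut(X)$, the action $g^*$ is parabolic with fixed ray $\R_{\ge 0}[D]$; if $[D]$ were not proportional to $[C]$, a Tits--ping-pong argument applied to sufficiently high iterates of $f^*$ and $g^*$ would produce a word in these isometries whose action is loxodromic, contradicting the zero-entropy hypothesis on $X$. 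Thus all infinite-order automorphisms share the ray $\R_{\ge 0}[C]$. For an arbitrary $h\in\Aut(X)$, the conjugate $hfh^{-1}$ is infinite-order and parabolic, so its fixed ray $\R_{\ge 0}h^*[C]$ must coincide with $\R_{\ge 0}[C]$; primitivity of $[C]$ forces $h^*[C]=[C]$, hence $h$ preserves the pencil $|C|$, giving $\Aut(X)=\Aut(X,|C|)$.

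The main obstacle is the ping-pong claim: two parabolic Lorentzian isometries with distinct fixed isotropic rays must generate a subgroup containing a loxodromic element. The cleanest approach is to pass to the hyperbolic space $\Hyp^{\rho(X)-1}$ associated with the positive cone of $\NS(X)_\R$, where parabolics are exactly the isometries fixing a unique boundary point. Sufficiently high iterates of two parabolics with different boundary fixed points act as contractions toward disjoint horoball neighborhoods of those boundary points, so Tits's ping-pong lemma applies and produces loxodromic elements; making this quantitative reduces to elementary estimates on Jordan blocks of the unipotent parts of $f^*$ and $g^*$. Once this step is established, identifying the fixed ray with an elliptic fibration class (via Riemann--Roch) and extending the conclusion to finite-order automorphisms are routine.
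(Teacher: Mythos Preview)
The paper does not give its own proof of this theorem; it is quoted from Oguiso (both in the introduction and again in Section~2, as Theorem~1.4 of that reference) without argument. The paper does supply the preliminary ingredients you use---Cantat's trichotomy (elliptic/parabolic/hyperbolic) and the identification of entropy with $\log\lambda(f^*|_{\NS(X)})$---but the step showing that all parabolics share a common fixed isotropic ray is left entirely to the cited reference.

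Your sketch is correct and is essentially Oguiso's argument. The easy direction is immediate, and for the hard direction the key point is exactly the one you isolate: two parabolic isometries of $\Hyp^{\rho-1}$ with distinct boundary fixed points generate a subgroup containing a loxodromic element. One small caveat on your write-up: the ping-pong phrasing is a bit awkward for parabolics, since a parabolic does not contract the boundary sphere toward its fixed point the way a loxodromic does (horoballs are preserved, not nested). The cleaner route---and the one usually taken---is either to show directly that some product $g^m f^n$ has spectral radius $>1$ (a short computation on the plane spanned by the two isotropic classes $[C],[D]$), or to invoke the classical dichotomy that a discrete subgroup of $\mathrm{Isom}(\Hyp^n)$ without loxodromics is elementary and hence globally fixes the boundary fixed point of any of its parabolics. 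Either way your outline goes through, and the conjugation trick extending the conclusion from infinite-order to all automorphisms is fine.
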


A corollary of Theorem \ref{teo:introfac} states that $X$ has zero entropy if and only if it admits a unique elliptic pencil $|C|$ with $|\Aut(X,|C|)|=\infty$. The advantage of this characterization is that it is purely lattice-theoretical, as the uniqueness can be read off the N\'eron-Severi lattice of $X$. A classification of complex K3 surfaces admitting a unique elliptic pencil with infinite automorphism group was asked for by Nikulin in \cite{nikulin1}.\newline

We address this classification problem in the case when $X$ satisfies a technical condition. More precisely, we want $X$ to admit an elliptic fibration (i.e. an elliptic pencil with a section) with only irreducible fibers, i.e. with only nodal or cuspidal singular fibers. This forces the Picard rank $\rho(X)\ge 3$, since every K3 surface admitting an elliptic fibration and with Picard rank at most $2$ has a finite automorphism group. Notice that not all K3 surfaces with $\rho(X)\ge 3$ satisfy this technical condition, but there are only finitely many N\'eron-Severi lattices of K3 surfaces not satisfying it.

Our main result is the following:

\begin{teo} \label{teo:intro}
Let $X$ be a smooth complex projective K3 surface with $|\Aut(X)|=\infty$. Suppose that $X$ admits an elliptic fibration with only irreducible fibers. Then $X$ has zero entropy if and only if the N\'eron-Severi lattice $\NS(X)$ belongs to an explicit list of $32$ lattices.
\end{teo}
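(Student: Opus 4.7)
The plan is to reduce the statement to a finite lattice-enumeration problem via the corollary of Theorem~\ref{teo:introfac}, and then verify the zero-entropy dichotomy case by case.

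\textbf{Step 1: Lattice reformulation.}
The assumption that $X$ admits an elliptic fibration with only irreducible fibers is equivalent, via Shioda--Tate and the bijection between elliptic fibrations and primitive embeddings $U\hookrightarrow\NS(X)$ (sending the isotropic basis vector to the fiber class), to a splitting $\NS(X)\cong U\oplus L$, where $L$ is an even negative-definite \emph{root-free} lattice; the splitting is automatic since $U$ is unimodular, and the absence of $(-2)$-classes in $L=U^\perp$ is exactly the absence of reducible fibers. By the corollary to Theorem~\ref{teo:introfac}, zero entropy is equivalent to the existence of a unique elliptic pencil $|C|$ with $|\Aut(X,|C|)|=\infty$. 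This condition is purely lattice-theoretic: elliptic pencils on $X$ correspond bijectively to $W(\NS(X))$-orbits of primitive isotropic vectors, and for each such class $e$ the finiteness of $\Aut(X,|C_e|)$ is detected from the Mordell--Weil rank of the associated Jacobian fibration, computed via Shioda--Tate from the trivial lattice of $\pi_e$.

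\textbf{Step 2: Bounding the Picard rank and producing candidates.}
A root-free negative-definite $L$ of too large rank or discriminant admits many nonequivalent primitive isotropic classes in $U\oplus L$ with positive Mordell--Weil rank, which contradicts the uniqueness of the zero-entropy pencil. Combined with the classifications of Nikulin and Kond\=o of K3 surfaces with finite automorphism group, this restricts $\rho(X)$ and the discriminant of $L$ to a finite range (and in fact forces $\rho(X)\le 18$, consistent with the abstract's statement that $\rho\ge 19$ always yields positive entropy). I would enumerate the resulting candidate lattices using Nikulin's theory of discriminant forms and Kneser's genus--neighbour techniques, and confirm that each $U\oplus L$ in the list is realized as the N\'eron--Severi lattice of some complex K3 surface via Nikulin's surjectivity criterion for primitive embeddings into the K3 lattice.

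\textbf{Step 3: Case-by-case verification and main obstacle.}
For each candidate $\NS(X)=U\oplus L$, I would determine the $W(\NS(X))$-orbits of primitive isotropic vectors using a Vinberg-style analysis of the fundamental chamber of the $(-2)$-reflection group inside the positive cone of $\NS(X)$; for each orbit I would compute the associated trivial lattice $T(\pi_e)$, and thus $\rk\MW(\pi_e)=\rho(X)-\rk T(\pi_e)$, and conclude whether $\Aut(X,|C_e|)$ is infinite. A candidate survives exactly when precisely one orbit yields an infinite such group, leaving the claimed list of $30$ lattices. The principal obstacle is this orbit analysis: $W(\NS(X))$ is typically of infinite index in $\Or(\NS(X))$ and its fundamental polyhedron can be combinatorially intricate, so uniformly controlling its orbits on isotropic vectors across all candidate lattices is the delicate step of the argument, and will likely require a combination of explicit lattice-theoretic calculations (reading off fiber types from the orthogonal decomposition around each orbit representative) with computer-assisted verification as $\rho(X)$ grows.
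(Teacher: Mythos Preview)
Your Step~1 is fine and matches the paper. The gap is in Step~2, and it is the heart of the matter: you assert that ``too large rank or discriminant'' forces multiple infinite-automorphism pencils, but supply no mechanism for this bound. Invoking Nikulin--Kond\=o does not help here (that classification concerns \emph{finite} $\Aut(X)$, the opposite regime), and your guess $\rho(X)\le 18$ is far off---the actual bound under the irreducible-fibers hypothesis is $\rho(X)\le 7$. Without an a~priori finiteness result, your Step~3 Vinberg analysis faces infinitely many lattices, and Vinberg's algorithm gives no uniform control across a family like $U\oplus\langle -2k\rangle$ as $k\to\infty$.

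The paper's route is quite different and rests on a reformulation you are missing: an elliptic fibration on $X$ corresponds to a primitive $U\hookrightarrow\NS(X)$, and its orthogonal complement always lies in the \emph{genus} of $L$. Hence elliptic fibrations up to $\Aut(X)$ are parametrized by the genus of $L$ together with the cokernel of $\Or(L)\to\Or(A_L)$; and a fibration has finite Mordell--Weil group iff its complement is a root-overlattice. So zero entropy forces $L$ to be the unique non-root-overlattice in its genus. Watson's classification says definite lattices unique in their genus have rank $\le 10$ and form an explicit finite list of primitive lattices; this is the finiteness engine. Multiples of Watson's lattices are then ruled out via a recursive ``extension'' lemma (if a corank-one primitive sublattice $L'\subset L$ already gives positive entropy and $|\det L|\ge 2|\det L'|$, so does $L$), combined with the rank-$3$ base case, which is settled by a direct congruence argument on $k$ rather than by chamber geometry. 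Finally, for $\rho>10$ one must show that whenever a root-overlattice lies in the genus of $L$, so does a \emph{second} non-root-overlattice; this is a separate computer-assisted verification over finitely many root-overlattice genera. None of these four ingredients---the genus reformulation, Watson's theorem, the recursive extension lemma, and the root-overlattice genus analysis---appears in your outline, and they are precisely what makes the problem finite.
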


The reader can find this list in Theorem \ref{thm:list0}. Incidentally, we also classify which of these $32$ classes of K3 surfaces admit other elliptic pencils during the proof of Theorem \ref{teo:intro}.

The classification in Theorem \ref{teo:intro} is obtained in three steps; we are going to outline the main ideas of each of them. By above we can consider $\rho(X)\ge 3$. If $X$ is a K3 surface admitting an elliptic fibration, then the sublattice of $\NS(X)$ generated by the elliptic curve $F$ and its zero section $S_0$ induces an orthogonal decomposition $\NS(X)=U\oplus L$. $U$ is a hyperbolic plane generated by $F$ and $S_0$, and $L=U^\bot$ is an even, negative definite lattice that describes the structure of the elliptic fibration, for example its reducible fibers and the group of its sections. When $\rho(X)=3$, the rank of $L$ is $1$, hence the intersection form on $\NS(X)$ is completely governed by a unique number, which coincides with the determinant of $\NS(X)$. Since automorphisms preserve the nef cone, we can rephrase our problem in terms of the nef cone of such surfaces. We then show that the structure of the nef cone can be understood by solving some congruences involving the determinant of $\NS(X)$. This allows us to show that $X$ has zero entropy if and only if $\det(\NS(X))$ satisfies a certain arithmetic property (cf. Theorem \ref{teo:finale3}).

When $\rho(X)\ge 4$, the intersection form on $\NS(X)$ depends on a lattice of rank $\rho(X)-2\ge 2$, hence it is impracticable to generalize the previous approach. However, a classical tool of lattice theory comes to our help. Consider again the orthogonal decomposition $\NS(X)=U\oplus L$ introduced above. The \textit{genus} of $L$ is a finite set of even, negative definite lattices parametrizing the structure of possible elliptic fibrations on $X$. If the genus of $L$ contains only $L$, we say that the genus is trivial, and this implies that all elliptic fibrations on $X$ are isomorphic (i.e. they all have the same structure).

An old result by Watson \cite{watson1}, recently corrected by Lorch and Kirschmer \cite{nebe}, completely classifies even, negative definite lattices with a trivial genus, and furnishes us with an explicit list. Our second step towards the classification in Theorem \ref{teo:intro} consists in proving that if $X$ satisfies the assumptions of Theorem \ref{teo:intro} and has zero entropy, then its N\'eron-Severi lattice must decompose as $\NS(X)=U\oplus L$, with $L$ having trivial genus. A priori it could happen that a K3 surface has many elliptic fibrations, but a unique one with infinite automorphism group. We rule this out by proving that if a K3 surface admits one elliptic fibration with only irreducible fibers and another elliptic fibration with finite automorphism group, then it admits a third intermediate elliptic fibration with infinite automorphism group. We obtain this result by studying the genera of root lattices.

Our third and final step amounts to studying the lattices in Watson's list. This list is infinite, as it contains all the multiples of some lattices. Using a recursive argument and the classification in Picard rank $3$ obtained previously we bound the determinant of the N\'eron-Severi lattice of a K3 surface of zero entropy. This allows us to restrict to a finite number of cases, and the classification is then completed by checking individually these remaining lattices.\newline

It is natural to ask what happens if we remove the technical condition in Theorem \ref{teo:intro}. If $\rho(X)=20$ is maximal, the K3 surface is said \textit{singular}, and in this case Oguiso \cite{oguiso1} has proven that $X$ always has positive entropy. Using the techniques introduced above, we are able to generalize his result to Picard rank $19$:

\begin{teo} \label{teo:introsing}
All smooth complex projective K3 surfaces with Picard rank $\ge 19$ and infinite automorphism group have positive entropy.
\end{teo}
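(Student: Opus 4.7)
By Oguiso's theorem \cite{oguiso1} the case $\rho(X)=20$ is settled, so we may assume $\rho(X)=19$. Suppose for contradiction that $X$ has zero entropy and infinite $\Aut(X)$. By Theorem \ref{teo:introfac} there is a \emph{unique} elliptic pencil $|C|$ with $\Aut(X,|C|)=\Aut(X)$ infinite. We may further assume $|C|$ admits a section $S_0$ (the case without a section can be handled analogously using a scaled hyperbolic summand $U(n)$). Write $\NS(X)=U\oplus L$ with $U=\langle F,S_0\rangle$ and $L$ negative definite of rank $17$. By Shioda--Tate, the infiniteness of $\Aut(X,|C|)$ is equivalent to the positivity of the Mordell--Weil rank, i.e.\ to the root sublattice $R\subseteq L$ generated by the components of reducible fibers satisfying $\rk R\leq 16$.

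The proof then splits into two cases according to $R$. If $R=0$ then $|C|$ is an elliptic fibration with only irreducible fibers, so Theorem \ref{teo:intro} applies and $\NS(X)$ must lie in the explicit list of $30$ lattices from Theorems \ref{teo:finale3}--\ref{teo:finale4}. A direct inspection of this list shows that none of its lattices has rank $19$, which already yields the desired contradiction. If $R\neq 0$, I would instead produce a second elliptic pencil $|C'|\neq|C|$ with $\Aut(X,|C'|)$ also infinite, contradicting the uniqueness of $|C|$. The construction follows the ``intermediate fibration'' strategy of the second step of the proof of Theorem \ref{teo:intro}: the components of reducible fibers of $|C|$, combined with sections of positive height (which exist because $\rk \MW\geq 1$), provide a primitive isotropic class $F'\in\NS(X)$ distinct from $F$ and orthogonal to a proper root subsystem of $R$. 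Applying Shioda--Tate to the pencil $|C'|$ induced by $F'$ then expresses its Mordell--Weil rank as $17$ minus the rank of its own reducible-fiber root sublattice $R'$.

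The main technical obstacle is precisely controlling $R'$ in this second case: one must show $\rk R'\leq 16$, since otherwise $\Aut(X,|C'|)$ could collapse to a finite group and no contradiction would ensue. This reduces to a finite lattice-theoretic analysis of negative definite rank-$17$ lattices containing a root system of rank $\leq 16$, precisely the kind of setting in which the genus-theoretic machinery of step two of the proof of Theorem \ref{teo:intro} operates; combined with the rank-$3$ classification given by Theorem \ref{teo:finale3} used as a base case for a recursion on determinants, this should rule out all remaining configurations. The hope is that the argument is in fact cleaner than in the main theorem: at rank $17$ the genus of $L$ is generically nontrivial, so genuinely distinct isotropic classes are abundant, and the contradiction should be forced in every configuration compatible with $\rho(X)=19$.
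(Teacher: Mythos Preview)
Your case split is reasonable and the $R=0$ branch is fine: when the unique infinite-automorphism pencil has only irreducible fibers, Theorem~\ref{teo:intro} applies and the list in Theorems~\ref{teo:finale3}--\ref{teo:finale4} contains nothing of rank $19$. But the $R\neq 0$ branch is not a proof; you say so yourself with ``should rule out'' and ``the hope is''. Two concrete problems:

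First, the ``intermediate fibration'' machinery you invoke (step two of Theorem~\ref{teo:intro}, i.e.\ Theorem~\ref{teo:rho10} and Corollary~\ref{cor:bello}) is proved under the standing hypothesis that some fibration has \emph{only irreducible fibers}. That hypothesis is exactly $R=0$, so in your $R\neq 0$ case you cannot cite it; you would need to redo the genus analysis from scratch for rank-$17$ lattices $L$ with nontrivial root part, and you give no argument for why the required non-root-overlattice neighbor exists. Your proposed construction of $F'$ (``orthogonal to a proper root subsystem of $R$'') does not by itself bound $\rk R'$: the new fibration can pick up entirely new reducible fibers supported away from $R$, and nothing you wrote excludes $\rk R'=17$.

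Second, you are missing the structural reduction that makes the paper's proof work. Because $\rho(X)=19$, the rank-$3$ transcendental lattice embeds into $U^2\oplus E_8$, which forces $\NS(X)\cong U\oplus E_8\oplus L$ with $L$ negative definite of rank~$9$. This $E_8$ splitting is the crux: it brings the genus analysis down from rank~$17$ to rank~$9$, where Watson's list and a finite enumeration of root-overlattice genera (there are $53$) become feasible. The paper then checks, partly by hand and partly with \texttt{Magma}, that for every such $L$ the lattice $E_8\oplus L$ admits at least two non-isometric non-root-overlattices in its genus (or reduces to the finite-automorphism case $L\cong E_8\oplus A_1$). Without this reduction your rank-$17$ approach faces a genus problem that is neither finite nor controlled by the results already proved in the paper.
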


As previously recalled, we have a complete list of N\'eron-Severi lattices of K3 surfaces with finite automorphism group (cf. \cite{nikulin10}). A quick inspection of such list shows that there exists a unique such N\'eron-Severi lattice of rank $\ge 19$, which is $U\oplus E_8\oplus E_8\oplus A_1$. This, combined with Theorem \ref{teo:introsing}, shows that any K3 surface $X$ of Picard rank $\ge 19$ and $\NS(X)\not\cong U\oplus E_8\oplus E_8 \oplus A_1$ has positive entropy. \newline

The outline of the paper follows closely the previous discussion. In Section 1 we give an overview on automorphisms on K3 surfaces and the basics of lattice theory. In Section 2 we recall the definition of entropy, the classification of automorphisms of K3 surfaces due to \cite{cantat2}, and Oguiso's Theorem \ref{teo:introfac}. In Section 3 we lay the groundwork to prove the main result. More precisely, we use Nikulin's theory of lattices to find sufficient conditions for a K3 surface to have positive entropy. In Section 4,5,6 we explain the three steps discussed above, in order to obtain the classification in Theorem \ref{teo:intro}. Finally, in Section 7 we prove Theorem \ref{teo:introsing}.

\begin{conv}
Throughout the paper we will always work over $\C$. We have used the software \texttt{Magma} to implement all the algorithms. The interested reader can find these algorithms and some computational data in the ancillary folder on ArXiv (\textit{ArXiv:1912.08583}).
\end{conv}

\begin{ack}
First af all, I want to thank my advisor Matthias Schütt for suggesting the problem and supervising the progress of this paper. I am grateful to Serge Cantat, Simon Brandhorst, Alberto Cattaneo and Mauro Fortuna for the many useful discussions, and to Edgar Ayala for carefully reading this manuscript. I thank Keiji Oguiso for pointing out important references, and Viacheslav Nikulin for finding a mistake in an earlier draft. I also thank the anonymous referee for the the useful comments. Finally, I am indebted to Victor Lozovanu for helping me improve the structure of this paper.
\end{ack}

\section{Setup}

\subsection{Automorphisms of projective smooth K3 surfaces}

Let $X$ be a smooth projective K3 surface over $\C$. The reference for this section is \cite{huybrechts1}. $H^2(X,\Z)$ is naturally endowed with a unimodular intersection pairing, making it isomorphic to the \textit{K3 lattice}
$$\Lambda_{K3}=U^{3}\oplus E_8^{ 2},$$
where $U$ is the hyperbolic plane and $E_8$ is the unique (up to isometry) even unimodular negative definite lattice of rank $8$. In particular the signature of $H^2(X,\Z)$ is $(3,19)$. Since the canonical bundle of $X$ is trivial, there exists a unique (up to scalars) nowhere-vanishing $(2,0)$-form $\omega_X$ on $X$.

The Néron-Severi group $\NS(X)=\Pic(X)$ is a hyperbolic sublattice of $H^2(X,\Z)$, i.e. it has signature $(1,\rho(X)-1)$, where $\rho(X)=\rk{\NS(X)}$ is the \textit{Picard rank} of $X$. The \textit{transcendental lattice} $\T(X)=\NS(X)^\bot\subset H^2(X,\Z)$ is the orthogonal complement of $\NS(X)$ in $H^2(X,\Z)$, and its complexification $\T(X)_\C=\T(X)\otimes \C$ contains the $(2,0)$-form $\omega_X$.

A peculiarity of K3 surfaces is that we can study their group of automorphisms as a subgroup of the group of isometries $\Or(H^2(X,\Z))$. Indeed, any automorphism $f\in \Aut(X)$ acts naturally as an isometry $f^*$ on $H^2(X,\Z)\cong \Lambda_{K3}$, and the map 
$$\Aut(X)\longrightarrow \Or(H^2(X,\Z))$$
sending $f$ to $f^*$ turns out to be injective. Similarly we have a map
$$\Aut(X)\longrightarrow \Or^+(\NS(X)),$$
where $\Or^+(\NS(X))$ is the group of isometries of $\NS(X)$ preserving the positive cone $\mathcal{C}_X$. There exists a chamber decomposition of the positive cone $\mathcal{C}_X$, and the Weyl subgroup $W<\Or^+(\NS(X))$ acts transitively on the set of chambers. Recall that $W$ is the subgroup generated by reflections across smooth $(-2)$-curves, i.e. generated by the reflections $s_\delta:C\mapsto C+(C\cdot \delta)\delta$ for all $\delta\in \NS(X)$ corresponding to smooth rational curves. The transitivity above can be rephrased in geometric terms: if $\alpha\in \mathcal{C}_X$ is an element in the positive cone, either it is nef, or there exists a smooth rational curve $\delta$ such that $\alpha\cdot \delta<0$ (cf. \cite{huybrechts1}, Corollary 8.1.7). Then $\alpha':=s_\delta(\alpha)$ has positive intersection with $\delta$, and we can repeat the process with $\alpha'$. After a \textit{finite} number of reflections, the element $\alpha$ becomes nef. Equivalently, there exists a unique nef element in the orbit $W\alpha$.

Therefore the quotient $\Or^+(\NS(X))/W$ can be viewed as the subgroup of $\Or^+(\NS(X))$ preserving the nef cone. Since automorphisms also preserve the nef cone, there is a strict interplay between these two groups, which is made explicit by the following theorem. If $L$ is an even lattice, we denote by $A_L=L^\vee/L$ the \textit{discriminant group} of the lattice, endowed with the induced quadratic form $q_L$ with values in $\Q/2\Z$. Moreover, we denote by $\Or_{\Delta^+}(\NS(X))$ the subgroup of $\Or^+(\NS(X))$ of isometries preserving the set of smooth curves $\Delta^+=\{ C \mid C \cong \Prj^1 \}\subseteq \NS(X)$.

\begin{prop}[\cite{PSS}, Section 7 - \cite{huybrechts1}, Chapter 15] \label{prop:tutto}
Let $X$ be a smooth complex projective K3 surface. Then:
\begin{enumerate}
\item The homomorphism
$$\Aut(X) \longrightarrow \Or^+(\NS(X))/W$$
has finite kernel and cokernel.
\item The group $\Aut(X)$ is isomorphic to the group
$$\{(\alpha,\beta) \in \Or_{\Delta^+}(\NS(X))\times \Or(\T(X)) \mid \overline{\alpha}=\overline{\beta}\in \Or(A_{\NS(X)})=\Or(A_{\T(X)})\},$$
where $\overline{\alpha}$ and $\overline{\beta}$ are the induced isometries of the isometric discriminant groups $A_{\NS(X)}=A_{\T(X)}$.
\end{enumerate}
\end{prop}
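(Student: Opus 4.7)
The plan is to derive both statements from the strong Torelli theorem for K3 surfaces combined with Nikulin's discriminant-gluing technique for overlattices. I will treat part (2) first, as its content drives part (1).

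For part (2), the strong Torelli theorem identifies $\Aut(X)$ with the group of Hodge isometries of $H^2(X,\Z)$ preserving the K\"ahler cone. Given such a Hodge isometry $g$, I set $\alpha := g|_{\NS(X)}$ and $\beta := g|_{\T(X)}$. Since $\NS(X)\oplus \T(X)$ sits as a finite-index sublattice in the unimodular lattice $H^2(X,\Z)$, Nikulin's gluing theory describes the overlattice as the graph of an anti-isometry $A_{\NS(X)} \xrightarrow{\sim} A_{\T(X)}$ of discriminant forms. Consequently, a pair $(\alpha,\beta) \in \Or(\NS(X)) \times \Or(\T(X))$ extends to an isometry of $H^2(X,\Z)$ if and only if $\overline{\alpha} = \overline{\beta}$ on the shared discriminant group. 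The condition that $g$ preserves the K\"ahler cone translates, via the characterization of the nef cone recalled just before the proposition, into $\alpha \in \Or_{\Delta^+}(\NS(X))$. The condition that $g$ is a Hodge isometry amounts to $\beta$ preserving the line $\C\omega_X \subset \T(X)_\C$, which I read as the intended meaning of $\Or(\T(X))$ in the author's notation. Assembling these pieces yields the isomorphism of (2).

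For part (1), the kernel consists of $f \in \Aut(X)$ with $f^*|_{\NS(X)} \in W$. Since $f^*$ preserves the nef chamber and $W$ acts simply transitively on the chambers of the positive cone, $f^*$ must act trivially on $\NS(X)$. By part (2), such an $f$ is then determined by $\beta := f^*|_{\T(X)}$, and $\beta$ must induce the identity on $A_{\T(X)}$; since the group of Hodge isometries of $\T(X)$ is finite (each acts on $\C\omega_X$ by a root of unity of bounded order, by Nikulin), the kernel is finite. For the cokernel, any class in $\Or^+(\NS(X))/W$ admits a representative $\alpha$ preserving the nef cone, and by part (2) such an $\alpha$ lifts to $\Aut(X)$ iff $\overline{\alpha}$ belongs to the image of the restriction map from the finite group of Hodge isometries of $\T(X)$ into $\Or(A_{\T(X)})$. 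This image is a finite subgroup of $\Or(A_{\T(X)}) = \Or(A_{\NS(X)})$, and the obstruction to lifting lives in the corresponding finite quotient, proving that the cokernel is finite.

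The main hurdle I anticipate is the precise matching between the geometric condition of preserving the nef cone and the lattice-theoretic condition $\alpha \in \Or_{\Delta^+}(\NS(X))$: one direction is immediate from the fact that walls of the nef cone correspond to classes in $\Delta^+$, and the converse uses the chamber-transitivity of $W$ together with the description of nef classes recalled in the excerpt.
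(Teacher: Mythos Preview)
The paper does not supply a proof of this proposition: it is stated as a known result, with attribution to \cite{PSS} and \cite{huybrechts1}, and used as a black box throughout. Your sketch is essentially the standard argument from those references, deriving both parts from the strong Torelli theorem together with Nikulin's discriminant-gluing description of overlattices, and it is correct.

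One remark worth recording: you rightly flag the ambiguity in the symbol $\Or(\T(X))$ appearing in part~(2). The paper does elsewhere distinguish $\Or_{Hdg}(\T(X))$ from $\Or(\T(X))$ (see Lemma~\ref{lemma:aut}(3) and Remark~\ref{oss:general}), so the statement as written is imprecise; your reading---that Hodge isometries are intended---is the only one under which the isomorphism holds, and it is consistent with how the proposition is actually invoked later in the paper. Your handling of the equivalence between preserving the nef cone and lying in $\Or_{\Delta^+}(\NS(X))$ is a bit terse but captures the point: the nef cone is a fundamental chamber for $W$, and its walls are exactly the hyperplanes orthogonal to classes of smooth rational curves.
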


Another classical result that we are going to need is the following:

\begin{teo}[\cite{huybrechts1}, Corollary 3.3.4, 3.3.5] \label{teo:huy2}
Let $X$ be a smooth projective K3 surface over $\C$, and $f\in \Aut(X)$. Then there exists an $n \in \N$ such that $(f^*)^n=\id$ on $\T(X)$. If moreover the Picard rank of $X$ is odd, then $f^*=\pm \id$ on $\T(X)$.
\end{teo}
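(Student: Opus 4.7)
The plan is to treat the two claims separately, both exploiting the fact that $f^*$ preserves the Hodge decomposition on $\T(X)_\C$ and acts by integers on $\T(X)$. For the first claim, I would observe that the Hodge decomposition $\T(X)_\C=H^{2,0}(X)\oplus (\T(X)_\C\cap H^{1,1}(X))\oplus H^{0,2}(X)$ is preserved by $f^*$ since $f$ is holomorphic. The real subspace underlying $H^{2,0}\oplus H^{0,2}$ carries a positive-definite intersection form (following from $\int \omega_X\wedge \bar\omega_X>0$), while the real subspace $\T(X)_\R\cap H^{1,1}$ is negative definite: indeed $H^{1,1}_\R$ has signature $(1,19)$ and $\NS(X)_\R\subset H^{1,1}_\R$ already absorbs the positive direction, having signature $(1,\rho(X)-1)$. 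Hence $f^*$ restricts to an isometry of each of these two real definite pieces, and such isometries are automatically diagonalizable over $\C$ with eigenvalues of modulus $1$. Since the characteristic polynomial of $f^*|_{\T(X)}$ lies in $\Z[x]$, all eigenvalues are algebraic integers on the unit circle, so by Kronecker's theorem each is a root of unity. Taking $n$ to be the least common multiple of their orders yields $(f^*)^n=\id$ on $\T(X)$.

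For the second claim, let $\lambda$ be the eigenvalue of $f^*$ on $H^{2,0}(X)$; by the first part it is a primitive $k$-th root of unity for some $k$. The decisive input is the irreducibility of $\T(X)_\Q$ as a rational Hodge structure: any nonzero $\Q$-sub-Hodge-structure $V\subset \T(X)_\Q$ must satisfy $V^{2,0}=H^{2,0}(X)$ (else $V$ would be purely of type $(1,1)$, hence contained in $\NS(X)_\Q\cap \T(X)_\Q=0$ by the Lefschetz $(1,1)$-theorem), and then a symmetric argument applied to the orthogonal complement $V^\bot\subset \T(X)_\Q$ forces $V=\T(X)_\Q$. Applying this to $\ker \Phi_k(f^*)\subset \T(X)_\Q$, a sub-Hodge-structure whose complexification contains $\omega_X$, yields $\Phi_k(f^*)=0$ on $\T(X)_\Q$. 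Therefore the characteristic polynomial of $f^*|_{\T(X)}$ is $\Phi_k^{a}$ for some $a\geq 1$, and $\rk \T(X)=a\cdot\phi(k)$. When $\rho(X)$ is odd, $\rk \T(X)=22-\rho(X)$ is odd, so $\phi(k)$ must be odd, which forces $k\in\{1,2\}$ and hence $\lambda=\pm 1$. The identity $\Phi_k(f^*)=0$ with $k\in\{1,2\}$ then directly gives $f^*=\pm\id$ on $\T(X)$.

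The step I expect to be most delicate is the irreducibility of $\T(X)_\Q$ as a rational Hodge structure, which is the pivot that collapses the second part to the alternative $\lambda=\pm 1$. It rests on the Lefschetz $(1,1)$-theorem to identify rational $(1,1)$-classes on a K3 surface with $\NS(X)_\Q$; once this is in place, the rest of the argument is a formal combination of the Hodge decomposition, Kronecker's theorem and the elementary fact that $\phi(k)$ is odd only for $k\in\{1,2\}$.
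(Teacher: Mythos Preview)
Your argument is correct and is essentially the standard proof found in the cited reference \cite{huybrechts1}; note that the paper itself does not prove this statement but merely quotes it from Huybrechts. The only minor remark is that for the first claim Huybrechts proceeds via the irreducibility of $\T(X)_\Q$ from the outset (showing that the minimal polynomial of $f^*|_{\T(X)}$ is the cyclotomic polynomial $\Phi_k$), whereas you first establish finiteness of the order by the more elementary signature/Kronecker argument and only invoke irreducibility for the second claim---but both routes are valid and lead to the same conclusion.
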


We now recall some basic facts about elliptic fibrations on K3 surfaces.

\begin{defn}
Let $X$ be a smooth projective surface. A \textit{genus $1$ fibration} on $X$ is a proper and flat morphism $\pi: X \rightarrow C$ to a smooth projective curve $C$ such that its generic fiber is smooth of genus $1$. If $\pi$ admits a section, we will say that $\pi$ is an \textit{elliptic fibration}, since every smooth fiber inherits the structure of an elliptic curve.
\end{defn}

When $X$ is a K3 surface, genus $1$ fibrations are in bijection with primitive nef elements $0\ne F\in \NS(X)$ with $F^2=0$. Indeed, it is easy to check that the linear system $|F|$ associated to any such $F$ induces a genus $1$ fibration $|F|:X\rightarrow \Prj^1$. 

If $|F|$ is an elliptic fibration on $X$, i.e. there exists an irreducible $(-2)$-curve $S$ with $FS=1$, then we will denote by $$\MW(F)=\{S\in \NS(X) \text{ irreducible $(-2)$-curve with $FS=1$}\}$$
the \textit{Mordell-Weil} group of the fibration. It has a natural group structure, induced by the group structure on the generic fiber of the fibration (after the choice of an $S_0\in \MW(F)$). Clearly $\rk(\MW(F))\le \rho(X)-2$, and equality can fail depending on the singular fibers of the elliptic fibration $|F|$ (cf. the Shioda-Tate formula, \cite{shioda2}, Corollary 1.5). Equality holds if and only if $|F|$ has only irreducible fibers (i.e. only nodal or cuspidal singular fibers), and in this case we will say that $|F|$ has \textit{maximal rank}.

There exists an embedding
$$\MW(F) \longhookrightarrow \Aut(X)$$
sending a section $S$ to the associated translation $\tau_S$. More precisely, if $S\in \MW(F)$ corresponds to a point $x$ in the generic fiber $F_\eta$, the translation by $x$ induces an automorphism of the genus $1$ curve $F_\eta$, which can be extended to an automorphism $\tau_S$ of the whole $X$. Notice that $\tau_S$ acts on a smooth fiber $F_0$ simply by the translation by $S \cap F_0$.

We can refine the previous embedding by underlining that $\MW(F)$ embeds into the subgroup $\Aut(X,|F|)<\Aut(X)$ of automorphisms $f$ of $X$ preserving the fibration $|F|$, i.e. such that $f^*F=F\in \NS(X)$. The group $\Aut(X,|F|)$ contains the group of automorphisms of the generic fiber $F_\eta$. $\Aut(F_\eta)$ is in turn generated by $\MW(F)$ (via the usual correspondence between rational points of $F_\eta$ and sections of $F$) and by the finitely many automorphisms respecting the origin of the group law on $F_\eta$. The quotient $\Aut(X,|F|)/\Aut(F_\eta)$, which corresponds to the induced action of $\Aut(X,|F|)$ on the base curve $\Prj^1$, is finite, since it has to permute the critical values of the fibration $|F|$, and it is easy to show that $|F|$ contains at least $3$ singular curves. Indeed, if $F_i$ are the singular fibers, $e_i$ is the Euler characteristic of $F_i$ and $r_i$ is the number of irreducible components in $F_i$ not meeting the zero section, we have $\sum_i{r_i}\le \rk(F^\bot/\langle F\rangle)\le 18$, $\sum_i{e_i}=e(X)=24$, and $e_i-r_i\in \{0,1,2\}$ (see for instance \cite{miranda1}, Lemma IV.3.2), hence there are at least $3$ singular fibers.

\subsection{Lattices and dense sphere packings}

In this section we will recall the basics of dense sphere packings, for ease of reference. We normally have to deal with negative definite lattices, but in order to be coherent with the huge literature, we will also deal with positive definite lattices. Our main reference is \cite{CS}.\newline

If $L$ is any even definite lattice of rank $r$, we denote
$$\min(L)=\min\{ |\|v\|_L| : v \in \Z^r \backslash\{0\} \}.$$

A \textit{root} of $L$ is a vector of norm ($\pm$)$2$. The \textit{root part} of $L$ is the sublattice $L_{root}$ of $L$ generated by its roots. $M$ is an \textit{overlattice} of $L$ if $L$ is a sublattice of $M$ of finite index.

\begin{defn}
A \textit{root lattice} is a lattice that coincides with its root part. A \textit{root-overlattice} is a lattice that is an overlattice of its root part, i.e. $\rk{L}=\rk{L_{root}}$. For the sake of simplicity, we consider the root lattices as a special case of root-overlattices.
\end{defn}

The root lattices are simply the lattices that can be obtained as a direct sum of the lattices $A_n,D_n,E_n$. We can easily compute their discriminant groups:

\begin{center}
\begin{tabular}{ c|c } 
$L$ & $A_L$\\
 \hline
$A_n$ &$\Z/(n+1)\Z$\\
$D_{2n}$ &$\Z/2\Z \times \Z/2\Z$\\
$D_{2n+1} $ &$\Z/4\Z$\\
$E_6$ &$\Z/3\Z$\\
$E_7$ &$\Z/2\Z$\\
$E_8$ &$\{0\}$\\

\end{tabular}
\end{center}

\begin{oss}
The maximal determinant of a root-overlattice of rank $r$ is $2^r$. Indeed, since root-overlattices are overlattices of root lattices, the maximal determinant must be attained at a root lattice (recall that if $M$ is an overlattice of $L$, then $\det(M)=\det(L)/[M:L]^2$). It is immediate to notice that the maximal determinant corresponds to the lattice $A_1^r$, of determinant $2^r$.
\end{oss}

We can also ask the converse question: if $L$ has no roots, can we bound its determinant $\det(L)$ from below? The following is one of the main theorems of dense sphere packings:

\begin{teo}[\cite{CS}, Table 1.2] \label{teo:deltar}
Let $L$ be an even positive definite lattice of rank $r$, with $m=\min(L)$. Then there exists a number $\delta_r >0$ such that
$$\frac{(m/4)^{(r/2)}}{\sqrt{\det(L)}}\le \delta_r.$$
In other words, there exists a constant $\Delta_{m,r}$ depending on $r$ and $m=\min(L)$ such that $\det(L)\ge \Delta_{m,r}$.
\end{teo}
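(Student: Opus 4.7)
The plan is to view $L$ as a lattice packing of $\R^r$ and then invoke the classical upper bound on sphere packing densities. First I would fix an embedding $L\hookrightarrow \R^r$ as a full rank lattice, so that the covolume of the fundamental domain equals $\sqrt{\det(L)}$. By definition of $m=\min(L)$, any two distinct points of $L$ have Euclidean distance $\ge \sqrt{m}$, so the open balls of radius $\sqrt{m}/2$ centered at the points of $L$ are pairwise disjoint and form a lattice packing of $\R^r$.

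Next I would compute the center density $\delta(L)$ of this packing, which by definition is the ratio between the volume of one such ball divided by the volume $V_r$ of the unit ball, and the covolume of the lattice:
$$\delta(L)\;=\;\frac{(\sqrt{m}/2)^{r}}{\sqrt{\det(L)}}\;=\;\frac{(m/4)^{r/2}}{\sqrt{\det(L)}}.$$
So the inequality in the statement is exactly the statement that the center density of $L$ is bounded above by some constant $\delta_r$ depending only on $r$.

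The hard part — really the only nontrivial input — is that such an upper bound $\delta_r$ on the center density exists. This is the classical theorem from the theory of sphere packings, and it is what we quote from \cite{CS}, Table 1.2: there is a dimension-dependent constant $\delta_r>0$ bounding the center density of any lattice packing in $\R^r$. I would simply appeal to this. (A crude self-contained alternative would be the trivial volume/Minkowski argument: a ball of radius $\sqrt{m}$ in $\R^r$ contains no nonzero lattice point, so by Minkowski's convex body theorem $2^r V_r (\sqrt m)^r \le 2^r \sqrt{\det(L)}$, giving the weaker bound $\delta(L)\le 2^{-r}$; this already suffices for the statement.)

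Finally I would rearrange the inequality $\delta(L)\le \delta_r$ to isolate $\det(L)$, obtaining
$$\det(L)\;\ge\;\frac{(m/4)^{r}}{\delta_r^{\,2}}\;=:\;\Delta_{m,r},$$
which is the second formulation in the statement. Both parts of the theorem then follow with the same constant $\delta_r$ and the derived constant $\Delta_{m,r}$ depending only on $m$ and $r$.
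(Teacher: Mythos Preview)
The paper does not give its own proof of this statement; it is simply cited from \cite{CS}, Table~1.2. Your proposal is therefore not being compared against a paper proof but against a bare citation, and your explanation that the displayed inequality is precisely the bound on the center density of the lattice sphere packing associated to $L$ is correct and is exactly the content being quoted from \cite{CS}.

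One small correction to your parenthetical Minkowski aside: the inequality you wrote, $2^r V_r(\sqrt{m})^r\le 2^r\sqrt{\det(L)}$, is not what Minkowski gives. Minkowski's theorem applied to the open ball of radius $\sqrt{m}$ yields $V_r\,m^{r/2}\le 2^r\sqrt{\det(L)}$, hence $\delta(L)=(m/4)^{r/2}/\sqrt{\det(L)}\le 1/V_r$, not $\delta(L)\le 2^{-r}$. This still suffices for the existence of a constant $\delta_r$ (take $\delta_r=1/V_r$), so your main argument is unaffected; only the stated numerical bound in the aside is off.
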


If $L$ has no roots, then $\min(L)\ge 4$. Table 1.2 in \cite{CS} provides some lower bounds for $\det(L)$:

\begin{table}[H]
    \centering

\begin{tabular}{c c c}

\begin{tabular}{ c|c } 
$r=\rk{L}$ & $\Delta_r$\\
 \hline
$1$ &$4$\\
$2$ &$12$\\
$3$ &$32$\\
$4$ &$64$\\
$5$ &$128$\\
$6$ &$192$

\end{tabular}

\begin{tabular}{ c|c } 
$r=\rk{L}$ & $\Delta_r$\\
 \hline
$7$ &$256$\\
$8$ &$256$\\
$9$ &$278$\\
$10$ &$283$\\
$11$ &$266$\\
$12$ &$233$

\end{tabular}

\begin{tabular}{ c|c } 
$r=\rk{L}$ & $\Delta_r$\\
 \hline
$13$ &$191$\\
$14$ &$146$\\
$15$ &$106$\\
$16$ &$73$\\
$17$ &$47$\\
$18$ &$29$

\end{tabular}
\end{tabular}
\caption{Lower bounds for the determinant of definite even lattices without roots.}
\label{table:bound}
\end{table}

Notice that $\Delta_r > 2^r$ if $r\le 7$, and $\Delta_r=2^r$ if $r=8$. 

\begin{defn}
Two even, positive (or negative) definite lattices $L,L'$ of the same rank are \textit{in the same genus} (or equivalently $L'$ is \textit{in the genus of $L$}) if their discriminant groups are isometric (i.e. the two quadratic forms $q_L,q_{L'}$ on $A_L \cong A_{L'}$ are isomorphic). The \textit{genus} of $L$ is the set of lattices in the genus of $L$ up to isometry.
\end{defn}

\begin{prop} \label{prop:rootgenus}
If $r\le 8$, then any lattice $L$ of rank $r$ in the genus of a root-overlattice has $\min(L)=2$. Equivalently, if $r \le 8$, there are no root-overlattices in the genus of a lattice with no roots.
\end{prop}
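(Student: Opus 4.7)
The plan is to play two complementary determinant bounds against one another. Suppose $L$ is a root-overlattice of rank $r \le 8$ and $L'$ lies in its genus. Members of a genus share the same discriminant group, and in particular the same determinant, so $\det(L') = \det(L)$. On the one hand, by the remark before Theorem \ref{teo:deltar}, every rank-$r$ root-overlattice satisfies $\det(L) \le 2^r$, with equality only for $L \cong A_1^r$. On the other hand, if $L'$ has no roots then $\min(L') \ge 4$ (since $L'$ is even), so Table \ref{table:bound} gives $\det(L') \ge \Delta_r$. For $r \le 7$ the entry of the table shows $\Delta_r > 2^r$, yielding an immediate contradiction.

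The genuinely delicate case is $r = 8$, where $\Delta_8 = 2^8$. Here the two inequalities only force $\det(L) = \det(L') = 256$, which pins down $L \cong A_1^8$. To finish, I would argue that $L'$ cannot lie in the genus of $A_1^8$ at all. Such an $L'$ is a positive definite even rank-$8$ lattice with $\min(L') = 4$ and $\det(L') = 256$, and this saturates Hermite's constant $\gamma_8 = 2$. By the classical uniqueness of $E_8$ as the similarity class attaining $\gamma_8$, it follows that $L'$ is isometric to the rescaling $E_8(2) = \sqrt{2}\, E_8$. A short comparison of discriminant forms then separates $A_1^8$ from $E_8(2)$: on $A_{A_1^8} \cong (\Z/2\Z)^8$ the form takes the fractional value $\tfrac{1}{2} \in \Q/2\Z$ on each standard generator, while on $A_{E_8(2)} \cong E_8/2E_8$ it is given by $q([y]) = \tfrac{1}{2}\langle y, y\rangle_{E_8} \pmod{2\Z}$, which takes only the integer values $\{0,1\}$ because $E_8$ is even. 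This contradiction closes the case $r = 8$.

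The main obstacle is precisely the boundary case $r = 8$: the generic determinant comparison becomes an equality and ceases to be decisive. I would navigate past it by combining the extremality of $E_8$ for Hermite's constant with the explicit discriminant-form calculation sketched above, so that the gap left by the borderline table entry is closed by structural information about rank-$8$ extremal lattices.
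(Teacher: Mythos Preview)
Your proof is correct. For $r\le 7$ it coincides with the paper's argument word for word: both use that genus-mates share the determinant, that a root-overlattice of rank $r$ has $\det\le 2^r$, and that Table~\ref{table:bound} gives $\Delta_r>2^r$.

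The difference is in the boundary case $r=8$. The paper simply observes that the determinant comparison pins down $L\cong A_1^8$ and then invokes a \texttt{Magma} check that $A_1^8$ is alone in its genus. You instead argue by hand: a rootless even genus-mate $L'$ of $A_1^8$ would have $\min(L')\ge 4$ and $\det(L')=256$, hence $\min(L')/\det(L')^{1/8}\ge 2=\gamma_8$, so $L'$ saturates Hermite's constant in dimension $8$; by the classical uniqueness of $E_8$ in that role, $L'\cong E_8(2)$. Your discriminant-form comparison is clean and correct: on $A_{A_1^8}\cong(\Z/2\Z)^8$ the form hits $\tfrac12\in\Q/2\Z$, whereas on $A_{E_8(2)}\cong E_8/2E_8$ one has $q([y])=\tfrac12\langle y,y\rangle_{E_8}\in\Z$ since $E_8$ is even. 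So $E_8(2)$ is not in the genus of $A_1^8$, and you are done.

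What each approach buys: the paper's \texttt{Magma} verification gives the slightly stronger fact that the genus of $A_1^8$ is a singleton, not merely that it contains no rootless member; this stronger statement is not needed here. Your route is computer-free, self-contained, and pinpoints exactly why the $r=8$ borderline fails --- it reduces to the extremality and rigidity of $E_8$. The only external input you rely on is the (well-known) uniqueness of $E_8$ up to similarity among rank-$8$ lattices attaining $\gamma_8$; it would be worth adding a precise citation (e.g.\ to \cite{CS}) for that fact.
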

\begin{proof}
This is obvious if $r\le 7$ as noticed above, since two lattices in the same genus have the same determinant. If $r=8$, we only have to check the claim for $A_1^8$, which is the unique root-overlattice of rank $8$ with determinant $2^8=256$. An easy check with \texttt{Magma} reveals however that $A_1^8$ is unique in its genus (up to isometry). Alternatively, one can use the Siegel mass formula \cite{CS88} and deduce that $A_1^8$ is unique in its genus from the equality
$$m(q)=\frac{1}{10321920}=\frac{1}{|\Or(A_1^8)|},$$
where $q$ is the quadratic form on the discriminant group of $A_1^8$.
\end{proof}

\begin{oss}
The previous result is in general not true if $r>8$. For instance, there exists a lattice with minimum $4$ in the genus of $A_1^{12}$. We will see other similar examples in the following, arising from more geometric constructions.
\end{oss}

\section{Entropy on K3 surfaces}

In this section we are going to recall first the main results on the entropy of automorphisms on complex K3 surfaces, and then we will prove a general criterion to decide whether a K3 surface has zero entropy.\newline

Let $X$ be a complex projective K3 surface. The cohomology group $H^{1,1}(X,\R)$ is a vector space of dimension $20$, endowed with a hyperbolic nondegenerate metric $q_X$. Hence the sheet
$$\mathbb{H}_X=\{c \in H^{1,1}(X,\R) \mid q_X(c)=1\}^+$$
intersecting the K\"{a}hler cone of $X$ is a model for the hyperbolic space $\mathbb{H}^{19}$. Since the automorphism group $\Aut(X)$ of the surface acts as an isometry on $H^2(X,\R)$ and preserves the K\"{a}hler cone, we have a natural map
$$\Aut(X) \longrightarrow \Or(\mathbb{H}_X).$$
Moreover $\Aut(X)$ can be seen as a discrete subgroup of isometries of $H^2(X,\R)$, since it embeds into the isometry group $\Or(H^2(X,\Z))$ of the lattice $H^2(X,\Z)\subseteq H^2(X,\R)$.\newline

The standard theory of hyperbolic geometry classifies isometries of $\mathbb{H}_X$ into three types: \textit{elliptic}, \textit{parabolic} or \textit{hyperbolic}. Recall that $\phi \in \Or(\mathbb{H}_X)$ is
\begin{itemize}
    \item \textit{elliptic}, if $\phi$ fixes an inner point $x\in \mathbb{H}_X \backslash \partial \mathbb{H}_X$;
    \item \textit{parabolic}, if $\phi$ is not elliptic and fixes a unique point in the boundary $\partial \mathbb{H}_X$;
    \item \textit{hyperbolic}, if $\phi$ fixes two points in the boundary $\partial \mathbb{H}_X$.
\end{itemize}

The next theorem interprets the geometric behaviour of automorphisms of $X$ with respect to this classification:

\begin{teo}[\cite{cantat2}] \label{teo:cantat}
Let $f\in Aut(X)$, and denote by $f^*\in \Or(\mathbb{H}_X)$ the induced isometry on the hyperbolic space $\mathbb{H}_X$.
\begin{itemize}
    \item $f^*$ is elliptic if and only if $f$ is periodic (i.e. it has finite order).
    \item $f^*$ is parabolic if and only if $f$ is not periodic and it respects a genus $1$ fibration on $X$ (i.e. there exists a primitive, nef element $F\in \NS(X)$ with $F^2=0$ such that $f^*F=F$). In this case, all eigenvalues of $\phi^*$ have norm $1$.
    \item $f^*$ is hyperbolic otherwise. In this case there exists a Salem number $\lambda>1$ such that $\{\lambda,\frac{1}{\lambda}\}$ is the list of eigenvalues of $f^*$ with norm different from $1$. 
\end{itemize}
\end{teo}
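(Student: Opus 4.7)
The three types elliptic/parabolic/hyperbolic partition the isometries of $\mathbb{H}_X$, and the three geometric conditions on $f$ (periodic; non-periodic and respecting a genus $1$ fibration; neither) are also mutually exclusive and exhaustive, so it suffices to prove the three forward implications. The key ingredients will be: the injectivity of $\Aut(X)\hookrightarrow \Or(H^2(X,\Z))$, so $f$ is periodic iff $f^*$ has finite order; Theorem \ref{teo:huy2}, which gives $(f^*)^n=\id$ on $\T(X)$ for some $n$; and the $f^*$-invariant orthogonal decomposition $H^{1,1}(X,\R)=\NS(X)_\R\oplus(\T(X)_\R\cap H^{1,1})$, whose second summand is negative definite and hence carries $f^*$ with finite order. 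This reduces the hyperbolic-geometric analysis of $f^*$ on $\mathbb{H}_X$ to the analysis of $f^*|_{\NS(X)_\R}$.

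For the elliptic case, if $f^*$ fixes a positive-norm class $v\in \mathbb{H}_X$, the stabilizer of $v$ in $\Or(H^2(X,\R))$ is $\Or(v^\bot)\cong \Or(21)$, which is compact since $v^\bot$ is negative definite. Since $\Or(H^2(X,\Z))$ is discrete in $\Or(H^2(X,\R))$, the stabilizer of $v$ in $\Or(H^2(X,\Z))$ is finite, so $f^*$ has finite order and $f$ is periodic. Conversely, if $f$ is periodic, averaging a K\"ahler class over the finite orbit of $f^*$ yields an interior fixed point, hence $f^*$ is elliptic.

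For the parabolic case, assume $f^*$ is parabolic. Then $f^*|_{\NS(X)_\R}$ is parabolic with a unique fixed null ray $\R_{>0}F$ in the boundary of the positive cone, and $f^*F=F$ (any scalar $\lambda\ne 1$ would split off a pair $\lambda,\lambda^{-1}$ and thus produce two fixed boundary points, which is the hyperbolic case). Let $V=\ker(f^*-\id)\subseteq \NS(X)$: this is a rational subspace containing $F$. The normal form of a parabolic isometry on $\R^{1,n}$ forces $q|_V$ to be negative semidefinite with $1$-dimensional kernel equal to $\R F$. Since the kernel of a rational bilinear form is a rational subspace, $\R F$ is rational, and we may take $F\in \NS(X)$ primitive. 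Iterating a K\"ahler class under $f^*$ produces nef classes whose projective images converge to the fixed boundary point, so $\R F$ lies in the closure of the K\"ahler cone and $F$ is nef; hence $|F|$ is a genus $1$ fibration preserved by $f$. The claim that all eigenvalues of $f^*$ have modulus $1$ follows from the parabolic Jordan normal form combined with Theorem \ref{teo:huy2}.

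In the remaining, hyperbolic case, $f^*$ fixes two distinct boundary null directions $v_\pm\in \NS(X)_\R$ with eigenvalues $\lambda,\lambda^{-1}$, $\lambda>1$. All other eigenvalues of $f^*$ on $H^2(X,\Z)$ lie on the unit circle, by Theorem \ref{teo:huy2} on $\T(X)$ and by the finite-order action on the negative definite complement of $\langle v_+,v_-\rangle$ in $\NS(X)_\R$. The characteristic polynomial of $f^*$ is a reciprocal integer polynomial; together with the integrality of the constant term of the minimal polynomial $m_\lambda(x)\in\Z[x]$ of $\lambda$, a short argument shows $m_\lambda$ is itself reciprocal with $\lambda,\lambda^{-1}$ as its only roots off the unit circle, which is the Salem property. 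The main obstacle I anticipate is the parabolic case, and specifically the passage from the abstract fixed null direction in $\mathbb{H}_X$ to a \emph{rational, primitive, nef} $F\in\NS(X)$ with $f^*F=F$: this combines the rationality of $V=\ker(f^*-\id)$, the parabolic normal form forcing $q|_V$ to be semidefinite with a $1$-dimensional null kernel, and the limit-of-K\"ahler-classes argument placing this line in the closure of the K\"ahler cone.
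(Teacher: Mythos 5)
The paper does not prove this statement: it is quoted from \cite{cantat2} and used as a black box, so there is no internal proof to compare against. Judged on its own, your proposal follows the standard route (reduce to $\NS(X)_\R$ via Theorem \ref{teo:huy2} and the negative-definiteness of the transcendental part of $H^{1,1}$, then analyze each isometry type) and the overall architecture is sound; in particular the parabolic case, which you correctly identify as the delicate one, is handled properly: rationality of $\ker(f^*-\id)$, the semidefiniteness of $q$ on the fixed space with one-dimensional radical $\R F$, and the limit-of-K\"ahler-classes argument for nefness are exactly the right three ingredients.

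There is one concrete slip you should repair. In the elliptic case you claim the stabilizer of the fixed class $v$ is $\Or(v^\bot)\cong\Or(21)$ and is compact "since $v^\bot$ is negative definite." Inside $H^2(X,\R)$, which has signature $(3,19)$, the complement of a positive vector has signature $(2,19)$ and its orthogonal group is not compact, so the argument as written fails. The fix is the decomposition you already set up: write the fixed point as $v=v_1+v_2$ with $v_1\in\NS(X)_\R$ and $v_2$ in the negative definite complement; then $q(v_1)\ge q(v)>0$, the stabilizer of $v_1$ in $\Or(\NS(X)_\R)$ is compact because $v_1^\bot\cap\NS(X)_\R$ is negative definite, and discreteness of $\Or(\NS(X))$ gives finite order on $\NS(X)$, which combines with Theorem \ref{teo:huy2} on $\T(X)$ to give finite order on all of $H^2(X,\Z)$. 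Two smaller points: (i) to close the partition argument you still need the (one-line) fact that a hyperbolic $f^*$ fixes no nonzero isotropic class --- the eigenvalue-$1$ eigenspace is orthogonal to the $\lambda$- and $\lambda^{-1}$-eigenspaces and hence negative definite --- otherwise "hyperbolic $\Rightarrow$ neither" is not established; (ii) in the hyperbolic case the restriction of $f^*$ to $\langle v_+,v_-\rangle^\bot$ need not have finite order, since that subspace is generally not rational; what you actually need, and what compactness of the relevant orthogonal group gives, is only that its eigenvalues have modulus $1$.
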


The concept of \textit{entropy} of automorphisms on K3 surfaces is closely related to this classification. The entropy can be defined in much more generality, but we restrict ourselves to the case when $Y$ is a complex projective variety and $g$ an automorphism of $Y$:

\begin{defn}
Let $Y$ be a complex projective variety and $g\in \Aut(Y)$. The \textit{entropy} of $g$ is defined as the quantity $h(g)=\log{\lambda(g^*)}$, where $\lambda(g^*)$ is the spectral radius of the pull-back map $g^*:H^*(Y,\C)\rightarrow H^*(Y,\C)$ on singular cohomology, i.e. the maximum norm of its eigenvalues.
\end{defn}

\begin{oss}
In this case there exists an equivalent, more topological, definition of the entropy, measuring how fast the iterates of $g$ create distinct orbits. See \cite{cantat1} for a nice introduction, and \cite{gromov1}, \cite{yomdin1} for the equivalence of the two definitions. If instead the variety is defined on a field of positive characteristic, there exists a similar definition of the entropy that uses \'etale cohomology; the interested reader can consult \cite{esnault1}.
\end{oss}

If $X$ is a K3 surface as above and $f\in \Aut(X)$, then the pull-back $f^*$ acts as the identity on $H^0(X,\C)\oplus H^4(X,\C)$. Moreover Theorem \ref{teo:huy2} shows that $f^*$ acts with finite order on the complexification $\T(X)_\C$ of the trascendental lattice, so the entropy of $f$ coincides with $\log{\lambda(f^*|_{\NS(X)_\C})}$, where $f^*|_{\NS(X)_\C}$ is the restriction of the pull-back to $\NS(X)_\C\subseteq H^2(X,\C)$. Hence Theorem \ref{teo:cantat} implies immediately:

\begin{cor} \label{cor:zero}
Let $f\in \Aut(X)$ be an automorphism of the K3 surface $X$. Then $f$ has zero entropy if and only if $f^*$ is either elliptic or parabolic. In other words, $h(f)=0$ if and only if $f$ is either periodic or it respects a genus $1$ fibration on $X$.
\end{cor}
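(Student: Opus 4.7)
The plan is to combine Theorem \ref{teo:cantat} with the observation, already made in the paragraph preceding the corollary, that only the action of $f^*$ on $\NS(X)_\C$ can contribute to the entropy. More precisely, I would first argue that
$$h(f)=\log \lambda(f^*|_{\NS(X)_\C}),$$
which reduces the problem to computing the spectral radius on the Néron--Severi part. For this, $f^*$ acts trivially on $H^0(X,\C)\oplus H^4(X,\C)$, and by Theorem \ref{teo:huy2} some power of $f^*$ is the identity on $\T(X)$, so its eigenvalues on $\T(X)_\C$ are roots of unity and contribute norm $1$. Since $H^{1,1}(X,\C)=\NS(X)_\C\oplus \T(X)_\C^{(1,1)}$ up to this finite-order part, the spectral radius of $f^*$ on the full cohomology agrees with that of $f^*|_{\NS(X)_\C}$.

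Next I would translate the trichotomy of Theorem \ref{teo:cantat} into a statement about $\lambda(f^*|_{\NS(X)_\C})$. If $f^*$ is elliptic, $f$ is periodic, so $f^*$ has finite order on $H^2(X,\Z)$; all its eigenvalues are roots of unity, giving $\lambda(f^*|_{\NS(X)_\C})=1$ and $h(f)=0$. If $f^*$ is parabolic, Theorem \ref{teo:cantat} states directly that every eigenvalue of $f^*$ has norm $1$, so again $\lambda(f^*|_{\NS(X)_\C})=1$ and $h(f)=0$. If $f^*$ is hyperbolic, the same theorem produces a Salem number $\lambda>1$ occurring as an eigenvalue of $f^*$, and since the two eigenvalues $\lambda,1/\lambda$ of modulus $\neq 1$ are realized on a hyperbolic two-plane inside $\NS(X)_\R$ (the span of the two fixed boundary points of $\mathbb{H}_X$), we get $\lambda(f^*|_{\NS(X)_\C})=\lambda>1$ and $h(f)=\log\lambda>0$.

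Putting these three cases together, $h(f)=0$ iff $f^*$ is elliptic or parabolic, and by Theorem \ref{teo:cantat} this is exactly the condition that $f$ is periodic or preserves a genus $1$ fibration. The only delicate point is checking that the hyperbolic Salem eigenvalues genuinely live in $\NS(X)_\C$ rather than in $\T(X)_\C$: this is where Theorem \ref{teo:huy2} is essential, since it forces all non-unit-modulus eigenvalues of $f^*$ away from the transcendental part, and this is the step I would be most careful about when writing out the details.
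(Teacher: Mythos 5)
Your proposal is correct and follows essentially the same route as the paper: the text preceding the corollary reduces $h(f)$ to $\log\lambda(f^*|_{\NS(X)_\C})$ via the trivial action on $H^0\oplus H^4$ and the finite order of $f^*$ on $\T(X)$ from Theorem \ref{teo:huy2}, and then reads off the conclusion from the trichotomy of Theorem \ref{teo:cantat}. Your extra care about locating the Salem eigenvalues in $\NS(X)_\C$ rather than $\T(X)_\C$ is exactly the point the paper leaves implicit, and it is handled correctly.
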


Recall that, if $C$ is any elliptic curve on $X$, we have defined the subgroup $\Aut(X,|C|)<\Aut(X)$ of automorphisms of $X$ preserving the genus $1$ fibration induced by $C$ (or equivalently, preserving the elliptic pencil $|C|$). We will call $\Aut(X,|C|)$ the \textit{automorphism group of the fibration $|C|$}. The previous result then shows that an automorphism $f\in \Aut(X)$ of infinite order has zero entropy if and only if $f\in \Aut(X,|C|)$ for some elliptic curve $C\subseteq X$.\\
The groups $\Aut(X,|C|)$ are quite easy to study: for instance, if $C$ induces an elliptic fibration on $X$, then $\Aut(X,|C|)$ coincides up to finite groups with the group $\MW(C)$ of sections of $|C|$, which in turn coincides up to torsion with some $\Z^s$. 

\begin{oss}
If $|F|$ is an elliptic fibration on $X$ and $S \in \MW(F)$ is a section, then the automorphism $\tau_S \in \Aut(X)$ induced by the section $S$ is elliptic (resp. parabolic) if and only if $S$ has finite (resp. infinite) order in $\MW(F)$. 
\end{oss}

We are finally able to indroduce the main protagonist of our paper:

\begin{defn}
A K3 surface $X$ is said to have \textit{zero entropy} (and we write $h(X)=0$) if all of its automorphisms have zero entropy. Otherwise $X$ is said to have \textit{positive entropy}, and we write $h(X)>0$.
\end{defn}

If $X$ has a finite automorphism group, then every $f\in \Aut(X)$ is elliptic, hence $X$ has zero entropy. K3 surfaces with a finite automorphism group have been widely studied by Nikulin (see for instance \cite{nikulin3}, \cite{nikulin4}, \cite{nikulin2}, \cite{nikulin5}, \cite{nikulin6}, \cite{nikulin7}, \cite{nikulin8}); we have in fact a complete classification of Néron-Severi lattices of complex K3 surfaces with a finite automorphism group (see also \cite{kondo} for a description of such automorphism groups). Therefore we are interested in studying K3 surfaces with an \textit{infinite} automorphism group.

If $X$ is a K3 surface with zero entropy, then studying its automorphism group amounts to studying the groups $\Aut(X,|C|)$, with $C$ varying among the elliptic curves $C\subseteq X$. Therefore automorphism groups of K3 surfaces of zero entropy are in some sense the ``easiest'' to understand.\newline

Our goal is to completely classify complex K3 surfaces with an infinite automorphism group and zero entropy. Notice that such surfaces must admit a genus $1$ fibration $|C|$ with infinite $\Aut(X,|C|)$. For, since $\Aut(X)$ is infinite and finitely generated (cf. \cite{sterk}), it must admit an element of infinite order, which has zero entropy by assumption.

\begin{oss} \label{oss:entropyneron}
Having zero entropy really depends only on the N\'eron-Severi lattice $\NS(X)$: indeed $\Aut(X)$ coincides with $\Or^+(\NS(X))/W$ up to finite groups, and an isometry in $\Or^+(\NS(X))$ has zero entropy if and only if one of its powers has zero entropy.
\end{oss}

The following is the main characterization of K3 surfaces of zero entropy:

\begin{teo}[\cite{oguiso1}, Theorem 1.4] \label{thm:og}
Let $X$ be a smooth, projective K3 surface with an infinite automorphism group. Then $X$ has zero entropy if and only if $\Aut(X)=\Aut(X,|C|)$ for a certain elliptic curve $C\subseteq X$. Equivalently, $X$ has zero entropy if and only if there exists a unique genus $1$ fibration $|C|$ on $X$ with infinite automorphism group.
\end{teo}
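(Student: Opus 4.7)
The plan is to prove the two equivalences in turn. First I would handle the easier equivalence: $\Aut(X)=\Aut(X,|C|)$ for some elliptic curve $C$ if and only if $|C|$ is the unique genus $1$ fibration on $X$ with infinite automorphism group. In one direction, if $\Aut(X)=\Aut(X,|C|)$ and $|C'|$ were another pencil with $|\Aut(X,|C'|)|=\infty$, then every element of $\Aut(X,|C'|)$ would lie in $\Aut(X,|C|)\cap \Aut(X,|C'|)$; any such automorphism $f$ fixes the two distinct isotropic classes $F,F'\in \NS(X)$, so $f^*$ fixes two boundary points of $\mathbb{H}_X$ while preserving the nef cone, which by Theorem \ref{teo:cantat} forces $f$ to be periodic; the intersection is then finite (a discrete subgroup of a compact orthogonal group), contradiction. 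Conversely, if $|C|$ is the unique pencil with infinite automorphism group and $f\in \Aut(X)$ satisfied $f^*F\ne F$, conjugation would give $f\Aut(X,|C|)f^{-1}=\Aut(X,|f^*C|)$, producing a second pencil with infinite automorphism group, a contradiction. Hence every automorphism lies in $\Aut(X,|C|)$.

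Next I would prove the easy implication: if $\Aut(X)=\Aut(X,|C|)$ with $|C|$ an elliptic pencil, then $h(X)=0$. Every $f\in \Aut(X)$ fixes the isotropic class $F=[C]\in \NS(X)$, so $f^*$ fixes the corresponding point on $\partial \mathbb{H}_X$. By Theorem \ref{teo:cantat}, $f^*$ is therefore elliptic or parabolic (never hyperbolic, since a hyperbolic isometry's boundary fixed points are irrational directions, whereas $F$ is rational and primitive). Corollary \ref{cor:zero} then gives $h(f)=0$ for every $f\in \Aut(X)$.

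The main work lies in the converse: assuming $h(X)=0$ and $|\Aut(X)|=\infty$, produce an elliptic pencil $|C|$ with $\Aut(X)=\Aut(X,|C|)$. Since $\Aut(X)$ is finitely generated by Sterk's theorem, it contains an element $f$ of infinite order; by $h(X)=0$ and Corollary \ref{cor:zero}, $f^*$ is parabolic and fixes a primitive isotropic $F\in \NS(X)$, which yields a genus $1$ fibration $|C|$ with $f\in \Aut(X,|C|)$. I would then show $\Aut(X)\subseteq \Aut(X,|C|)$ by contradiction: if $g\in \Aut(X)$ satisfies $g^*F\ne F$, then $gfg^{-1}$ is parabolic and fixes the distinct boundary point $[g^*F]$. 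The key geometric input is the classical ping-pong fact that two parabolic isometries of a real hyperbolic space with distinct boundary fixed points generate a subgroup containing hyperbolic elements (e.g.\ $f^N (gfg^{-1})^N$ is hyperbolic for $N\gg 0$). Such a hyperbolic element would have positive entropy by Theorem \ref{teo:cantat} and Corollary \ref{cor:zero}, contradicting $h(X)=0$.

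The main obstacle is this last step, the ping-pong argument: one must verify that the hyperbolic element produced by combining two parabolics acts on $\NS(X)$ (not merely on $H^{1,1}(X,\R)$) with spectral radius $>1$, and that its action preserves the nef cone so that it really comes from $\Aut(X)$. Both follow because $f$ and $gfg^{-1}$ are genuine automorphisms, their $*$-actions respect $\NS(X)\subseteq H^{1,1}(X,\R)$ and preserve the nef cone, and the hyperbolic $*$-action on $\mathbb{H}_X$ automatically yields a spectral radius $>1$ on $\NS(X)_\R$ because the relevant Salem eigenvalue in Theorem \ref{teo:cantat} is already a root of a polynomial with integer coefficients acting on $\NS(X)$. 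Combining the two equivalences completes the proof.
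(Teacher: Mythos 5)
The paper does not actually prove this statement: it is quoted verbatim from Oguiso (\cite{oguiso1}, Theorem 1.4) with no argument supplied, so there is no in-paper proof to compare against. Your proposal is a essentially correct reconstruction of the standard Cantat--Oguiso argument: reduce to the action on the hyperbolic space $\mathbb{H}_X$, use Theorem \ref{teo:cantat} and Corollary \ref{cor:zero} to translate ``zero entropy'' into ``no hyperbolic isometries'', extract a parabolic element of infinite order via Sterk's finite generation (this is exactly the reduction the paper itself performs just before stating the theorem), and then use the elementary-group/ping-pong fact that two parabolics with distinct boundary fixed points generate a hyperbolic element. All the main ingredients are in place, including the point you rightly single out as the crux (that the hyperbolic element lives on $\NS(X)$ and genuinely has spectral radius $>1$).

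One step is stated imprecisely and, as written, is not a valid deduction: in the first equivalence you claim that an automorphism $f$ whose isometry $f^*$ ``fixes two boundary points of $\mathbb{H}_X$'' must be periodic by Theorem \ref{teo:cantat}. That cannot be the reason, since hyperbolic isometries also fix two boundary points. What saves the argument is that $f\in\Aut(X,|C|)\cap\Aut(X,|C'|)$ fixes the primitive isotropic nef classes $F,F'$ \emph{as vectors} (eigenvalue $1$), not merely as rays. Since $F\ne F'$ are both nef and a hyperbolic lattice has no $2$-dimensional isotropic subspace, $F\cdot F'>0$, so $f^*$ fixes $F+F'$, a class of positive square, i.e.\ an interior point of the positive cone; hence $f^*$ is elliptic and the whole intersection is a discrete subgroup of a compact stabilizer, hence finite. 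With that repair (and granting the classical hyperbolic-geometry fact about products of powers of two parabolics, which you correctly cite rather than reprove), the proof is complete.
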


\begin{cor}
Let $X$ be a smooth, projective K3 surface admitting an elliptic fibration $|F|$ with infinitely many sections. Then $X$ has zero entropy if and only if $|F|$ is the unique elliptic fibration on $X$ with infinitely many sections. 
\end{cor}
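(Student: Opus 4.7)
The plan is to deduce the corollary directly from the preceding theorem via the standard dictionary between sections of an elliptic fibration and its automorphism group. The discussion at the end of Section~2 (the finite action of $\Aut(X,|F|)$ on the base $\mathbb{P}^1$, together with the fact that $\Aut(F_\eta)$ differs from $\MW(F)$ by a finite group of origin-fixing automorphisms) yields, for every elliptic fibration $|F|$ on a K3 surface,
\begin{equation*}
|\Aut(X,|F|)| = \infty \iff |\MW(F)| = \infty \iff |F| \text{ admits infinitely many sections}.
\end{equation*}
Modulo this translation, the corollary is essentially a restatement of the theorem.

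For the forward direction, I would assume $h(X) = 0$. Since $|F|$ has infinitely many sections, the dictionary gives $|\Aut(X)| \ge |\Aut(X,|F|)| = \infty$, so the theorem applies and singles out $|F|$ as the unique genus~$1$ fibration on $X$ with infinite automorphism group. Any second elliptic fibration with infinitely many sections would then yield, via the dictionary, a second genus~$1$ fibration with infinite $\Aut$---a contradiction.

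For the converse, I would assume $|F|$ is the unique elliptic fibration on $X$ with infinitely many sections. Then $|\Aut(X)| = \infty$, and the theorem gives $h(X) = 0$ as soon as $|F|$ is the unique genus~$1$ fibration on $X$ with infinite $\Aut$. The dictionary together with the uniqueness hypothesis rules out any other elliptic fibration with infinite $\Aut$.

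The main obstacle is the remaining case of a genus~$1$ fibration $|C|$ on $X$ without a section but with $|\Aut(X,|C|)| = \infty$, where the dictionary no longer applies verbatim. I would rule this case out by a hyperbolic-geometry argument: a non-torsion section of $|F|$ produces a parabolic automorphism of $X$ fixing $[F] \in \partial \mathbb{H}_X$, while an infinite-order element of $\Aut(X,|C|)$ is also parabolic by Corollary~\ref{cor:zero} and Theorem~\ref{teo:cantat}, fixing the distinct class $[C] \in \partial \mathbb{H}_X$. The standard dynamics of two such parabolic isometries of $\mathbb{H}^{19}$ with distinct boundary fixed points produces, after taking suitable compositions of high enough powers, a hyperbolic isometry and hence a positive-entropy automorphism of $X$. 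A final application of the theorem and the dictionary to this enriched configuration then forces the existence of a second elliptic fibration on $X$ with infinitely many sections, contradicting the uniqueness hypothesis and completing the converse direction.
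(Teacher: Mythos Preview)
The paper states this corollary without proof, so there is no argument to compare against; the intended reasoning is that it is immediate from Oguiso's theorem. Your forward direction is fine.

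Your converse direction, however, has a genuine gap in its final step. After producing a hyperbolic automorphism from the two parabolics fixing $[F]$ and $[C]$, you write that ``a final application of the theorem and the dictionary then forces the existence of a second elliptic fibration on $X$ with infinitely many sections''. But the theorem only tells you that positive entropy implies the existence of at least two genus~$1$ fibrations with infinite automorphism group---and you already have two such fibrations, namely $|F|$ and $|C|$. Nothing in the theorem or in your dictionary promotes $|C|$ to a fibration \emph{with a section}, so the contradiction with the uniqueness hypothesis is not reached.

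The fix is short and in fact makes the whole case split unnecessary. Argue the contrapositive directly: if $h(X)>0$, pick a hyperbolic $g\in\Aut(X)$. By Cantat's classification, $g$ cannot preserve any genus~$1$ fibration (otherwise $g$ would be parabolic or elliptic), so $g^*F\neq F$ in $\NS(X)$. Then $|g(F)|$ is a second elliptic fibration on $X$, and it has infinitely many sections, since $g$ carries each section of $|F|$ to a section of $|g(F)|$. This contradicts the uniqueness hypothesis and finishes the converse without ever needing to discuss genus~$1$ fibrations without section.
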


\begin{oss}
Nikulin proves in \cite{nikulin1} (cf. Theorem 10) that if $\rho(X)\ge 3$ and $X$ admits at least $2$ genus $1$ fibrations with infinite automorphism group, then it must admit an infinite number of such fibrations.
\end{oss}

In this paper we will consider the case when $X$ admits an elliptic fibration with infinitely many sections; under this assumption we must have $\rho(X)\ge 3$, as it is easy to check that, if $\rho(X) \le 2$, $X$ admits either no elliptic fibrations at all, or every elliptic fibration on $X$ has trivial Mordell-Weil group.

\section{Elliptic K3 surfaces of zero entropy}

This section is the core of the paper: here we prove the first generalities about K3 surfaces of zero entropy, and we introduce the methods that will be used later on to classify such surfaces.\newline

From now on $X$ will always be a smooth complex projective elliptic K3 surface, with Picard rank $\rho(X)\ge 3$. For the moment we assume that $X$ admits an elliptic fibration $|F|$ of maximal rank, i.e. with only irreducible fibers. If $S_0$ is the zero section of the fibration, the unimodularity of the trivial lattice $\langle F,S_0\rangle \cong U$ induces an orthogonal decomposition
$$\NS(X) = \langle F,S_0 \rangle \oplus L,$$
where $L$ is an even negative definite lattice of rank $r=\rho(X)-2$. Notice that $L$ has no roots, because the elliptic fibration $|F|$ has only irreducible fibers.

We will denote in the following by $[x,y,z]\in \NS(X)$ the divisor written with respect to the basis $\{F,S_0,\mathcal{B}\}$ of $\NS(X)$, where $\mathcal{B}$ is a basis of $L$, fixed once and for all.

\begin{oss}
Let $0 \ne D=[x,y,z]\in \NS(X)$ be a divisor (not necessarily irreducible nor reduced) such that $D^2\ge -2$. By Riemann-Roch one of $D$ and $-D$ is effective, and since $FD=y$, we have that $D$ is effective if $y> 0$, while $-D$ is effective if $y<0$. This leads to the following useful characterization:
\end{oss}

\begin{lemma}
Let $X$ be a K3 surface admitting an elliptic fibration with only irreducible fibers. Let $A\in \NS(X)$ be a divisor with $A^2\ge 0$ and $FA\ge 0$. Then $A$ is nef if and only if, for all divisors $D=[x,y,z]\in \NS(X)$ with $D^2=-2$ and $y>0$, the inequality $AD\ge 0$ holds.
\end{lemma}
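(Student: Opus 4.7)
The plan is to handle the two directions separately. The forward implication is immediate: the preceding Remark forces any $D$ with $D^{2}=-2$ and $y>0$ to be effective, so $A\cdot D\ge 0$ whenever $A$ is nef.

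For the converse I rely on the standard characterization of the nef cone on a K3 surface: a class $A$ with $A^{2}\ge 0$ is nef if and only if $A$ lies in the closure $\overline{\mathcal{C}_X}$ of the positive cone and $A\cdot R\ge 0$ for every $(-2)$-curve $R\subseteq X$. I verify these two conditions in turn.

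First, because every fiber of $|F|$ is irreducible with self-intersection $0$, no $(-2)$-curve $R$ can satisfy $F\cdot R=0$; hence $F\cdot R>0$ and the hypothesis applies directly to each $(-2)$-curve, giving $A\cdot R\ge 0$ (and in particular $A\cdot S_0\ge 0$). Second, to place $A$ in $\overline{\mathcal{C}_X}$, I first derive $A\cdot F\ge 0$. Pick any nonzero $\ell\in L$ and write $\ell^{2}=-2m$; since $L$ has no roots, $m\ge 2$. For each integer $n\ge 1$ the classes
$$D_n^{\pm}=m n^{2} F + S_0 \pm n\ell$$
satisfy $(D_n^{\pm})^{2}=-2$ and $y=1$, so adding the two inequalities $A\cdot D_n^{\pm}\ge 0$ yields $m n^{2}(A\cdot F)+A\cdot S_0\ge 0$ for every $n$. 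Letting $n\to\infty$ forces $A\cdot F\ge 0$. Combined with $A\cdot S_0\ge 0$, this produces $A\cdot(S_0+3F)\ge 0$; since $S_0+3F$ is effective with $(S_0+3F)^{2}=4>0$, it sits in the interior of the positive cone, and the Hodge index theorem then places $A$ on the same side, yielding $A\in\overline{\mathcal{C}_X}$.

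The step I expect to be the main obstacle is producing $A\cdot F\ge 0$: the hypothesis never pairs $A$ directly against $F$, because $F^{2}=0$ rather than $-2$, so one must exploit the existence of nonzero vectors of arbitrarily negative norm in the nontrivial negative-definite lattice $L$ to build an infinite family of $(-2)$-classes with $y=1$ whose asymptotic behaviour forces the inequality; the remaining steps are routine Hodge-index and chamber-decomposition arguments on K3 surfaces.
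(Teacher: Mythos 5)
Your proof is correct; the forward direction matches the paper's, but your converse takes a genuinely different route. The paper argues by contraposition: a non-nef $A$ with $A^2\ge 0$ must meet some irreducible $(-2)$-curve negatively, and any such curve is horizontal ($y>0$) because all fibers are irreducible. That argument tacitly invokes the chamber description of the nef cone, which produces such a curve only when $A$ lies in $\overline{\mathcal{C}_X}$ --- a condition the lemma does not assume (harmless in the paper, where the lemma is only ever applied to effective classes). You instead verify nefness directly as membership in $\overline{\mathcal{C}_X}$ together with nonnegativity against all $(-2)$-curves. The genuinely new ingredient is your derivation of $A\cdot F\ge 0$ from the auxiliary classes $D_n^{\pm}=mn^2F+S_0\pm n\ell$; this is legitimate because the hypothesis quantifies over \emph{all} integral classes with $D^2=-2$ and $y>0$, not only effective irreducible ones, and it is precisely where the standing assumption $\rho(X)\ge 3$ (hence $L\ne 0$) enters --- without it the statement actually fails (e.g.\ $A=-(2F+S_0)$ in $\NS(X)=U$ satisfies the hypothesis but is anti-nef). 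The remaining steps ($(D_n^{\pm})^2=-2$, every $(-2)$-curve being horizontal, and the Hodge-index argument placing $A$ in $\overline{\mathcal{C}_X}$ from $A\cdot(S_0+3F)\ge 0$) all check out. What your approach buys is a proof valid for arbitrary $A$ with $A^2\ge 0$, closing a point the paper leaves implicit, at the cost of a few extra lines.
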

\begin{proof}
The divisor $A$ is nef if and only if it has non-negative intersection with all smooth rational curves on $X$ (cf. \cite{huybrechts1}, Corollary 8.1.4, 8.1.7). Suppose that there exists $D=[x,y,z]\in \NS(X)$ with $D^2=-2$ and $y>0$ such that $AD<0$. Then by the above remark $D$ is effective, and, since $D^2=-2$, it is forced to split into the sum of some irreducible $(-2)$-curves. The inequality $AD<0$ implies that there exists an irreducible $(-2)$-curve $C$ (which is a summand of $D$) such that $AC<0$, contradicting the nefness of $A$.\\
Conversely, if $A$ is not nef, there exists an effective $(-2)$-curve $C=[x,y,z]$ such that $AC<0$. As above $y \ge 0$, but $y=0$ only if $C$ is contained in a fiber of the elliptic fibration, hence $y>0$ since by assumption there are no reducible fibers.
\end{proof}

\begin{lemma} \label{lemma:int}
Let $X$ be a K3 surface admitting an elliptic fibration $|F|$ with only irreducible fibers. Let $F\ne E=[\alpha,\beta,\gamma],C=[x,y,z]\in \NS(X)$ be effective, primitive divisors such that $E^2=0$ and $C^2=-2$. Then the equation $EC=m$ can be equivalently written as
$$-\frac{1}{2}\|v\|_L=\beta(\beta+my),$$
where $v=y\gamma  - \beta z$. In particular $E$ is nef if and only if 
$$-\frac{1}{2}\|v\|_L - \beta^2 \ge 0$$
for any such $C$, and $E$ induces a genus $1$ fibration with only irreducible fibers if and only if
$$-\frac{1}{2}\|v\|_L - \beta^2 > 0$$
for any such $C$.
\end{lemma}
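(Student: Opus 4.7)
The plan is a direct bilinear computation, combined with the previous lemma. First I would expand the conditions $E^2 = 0$, $C^2 = -2$ and $E \cdot C = m$ in the orthogonal decomposition $\NS(X) = \langle F, S_0\rangle \oplus L$ (using $F^2 = 0$, $S_0^2 = -2$, $F \cdot S_0 = 1$, and orthogonality to $L$). This yields
\[
\|\gamma\|_L = 2\beta(\beta - \alpha), \qquad \|z\|_L = -2 - 2y(x-y), \qquad \gamma \cdot z = m - \alpha y - \beta x + 2\beta y.
\]
Then I would expand $\|v\|_L = \|y\gamma - \beta z\|_L = y^2\|\gamma\|_L - 2\beta y(\gamma \cdot z) + \beta^2\|z\|_L$ by bilinearity and substitute the three identities above. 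After substitution, every term involving $\alpha$ and every term involving $xy$ cancels, leaving exactly $-2\beta^2 - 2\beta y m$; dividing by $-2$ gives the claimed identity.

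For the nefness and maximal rank assertions, the preliminary observation is that $\beta > 0$. Indeed, $E$ is effective so $\beta = F \cdot E \ge 0$, and $\beta = 0$ would force $\|\gamma\|_L = 0$, which in the negative definite lattice $L$ implies $\gamma = 0$ and hence $E = \alpha F$; primitivity then forces $E = F$, contradicting the hypothesis $E \ne F$. Combined with $y > 0$, this gives $\beta y > 0$, so from the identity $m \ge 0$ (resp.\ $m > 0$) is equivalent to $-\tfrac{1}{2}\|v\|_L - \beta^2 \ge 0$ (resp.\ $> 0$). The nefness characterization then follows directly from the preceding lemma, which reduces nefness of $E$ to the condition $E \cdot C \ge 0$ on all $C = [x,y,z]$ with $C^2 = -2$ and $y > 0$.

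For the maximal rank statement I would argue that $|E|$ has only irreducible fibers if and only if no smooth rational curve $C$ satisfies $E \cdot C = 0$: such a curve cannot be an entire fiber, because $C^2 = -2 \ne 0 = E^2$, so it would necessarily be a proper component of a reducible fiber, and conversely the components of a reducible fiber are smooth rational curves orthogonal to $E$. By Riemann--Roch and the same effectivity reduction used in the previous lemma, this translates into $E \cdot C > 0$ for every class $C = [x,y,z]$ with $C^2 = -2$ and $y > 0$, which via the identity is exactly the strict inequality in the statement. The only slightly subtle point, and the main thing I would be careful about, is this last equivalence between ``only irreducible fibers'' and strict positivity of intersections with $(-2)$-classes of positive $F$-degree; the identity itself and the cancellations in its proof are robust.
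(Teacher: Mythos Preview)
Your proof is correct and essentially the same as the paper's: both extract the three relations from $E^2=0$, $C^2=-2$, and $E\cdot C=m$ in the $U\oplus L$ decomposition and then reduce to the displayed identity by a direct bilinear computation (the paper solves for $\alpha$ and $x$ and substitutes into $E\cdot C$, you expand $\|y\gamma-\beta z\|_L$ and substitute the three relations --- these are the same manipulation). Your treatment is in fact more thorough than the paper's, which omits the justification of the ``In particular'' clauses; your arguments that $\beta>0$, that the previous lemma gives the nefness criterion, and that the strict inequality corresponds to absence of $(-2)$-curves orthogonal to $E$ (hence only irreducible fibers) are all correct and fill in what the paper leaves implicit.
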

\begin{proof}
This is a straightforward computation. The self-intersections of $E,C$ force
\begin{equation} \label{eq:div}
\begin{cases}
\alpha= \beta + \frac{-\frac{1}{2}\|\gamma\|_L}{\beta}\\
x= y + \frac{-\frac{1}{2}\|z\|_L-1}{y}
\end{cases}.
\end{equation}
Notice that $\beta \ne 0$ since $E\ne F$, and $y \ne 0$ since we are assuming that $L$ has no roots. Substituting these expressions into the equation
$$m=EC=\alpha y + \beta x -2\beta y + \langle \gamma,z\rangle_L,$$
we obtain easily the desired equation.
\end{proof}

\begin{oss}
If $E\in \NS(X)$ is such that $E^2=0$ and $EF=1$, then it cannot be nef. Indeed, the divisor $E-F$ has self-intersection $-2$ and $(E-F)F=1$, hence $E-F$ is effective and $(E-F)E=-1$.
\end{oss}

The next proposition highlights a certain ``periodicity'' of elliptic curves on $X$; notice that this highly depends on the assumption that $X$ admits an elliptic fibration with only irreducible fibers.

\begin{prop} \label{prop:periodic}
Let $X$ be a K3 surface admitting an elliptic fibration with only irreducible fibers. Let $E=[\alpha,\beta,\gamma], E'=[\alpha',\beta,\gamma']\in \NS(X)$ be primitive elements with $E^2=E'^2=0$ and $\gamma'\equiv \gamma \pmod{\beta}$ (i.e. all the entries are congruent modulo $\beta$). Then $E$ induces a genus $1$ fibration (resp. an elliptic fibration) on $X$ if and only if $E'$ does so.
\end{prop}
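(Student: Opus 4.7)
The plan is to construct an explicit bijection $\Phi$ on the set of $(-2)$-classes $C=[x,y,z]\in\NS(X)$ with $y>0$, depending on $w:=(\gamma'-\gamma)/\beta\in L$, and to show that $EC=E'\Phi(C)$ for every such $C$. Combined with Lemma~\ref{lemma:int}, this will force nefness of $E$ and of $E'$ to be equivalent, handling the genus~$1$ fibration case; transferring a section via $\Phi$ will then handle the elliptic fibration case.

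Concretely, I would set
$$\Phi([x,y,z])\;:=\;\bigl[x-\langle z,w\rangle_L-\tfrac{y}{2}\|w\|_L,\ y,\ z+yw\bigr].$$
A direct check using the evenness of $L$ (so that $\|w\|_L/2\in\Z$) shows that the first coordinate is integral, and that $\Phi$ preserves both the self-intersection $-2$ and the value of the second coordinate; replacing $w$ with $-w$ gives the inverse, so $\Phi$ is indeed a bijection on this set. The key identity, verified by direct substitution, is
$$y\gamma'-\beta(z+yw)=y\gamma-\beta z,$$
so Lemma~\ref{lemma:int} applied to both pairs $(E,C)$ and $(E',\Phi(C))$ yields the same integer $m$ in the equation $-\tfrac12\|v\|_L=\beta(\beta+my)$, hence $EC=E'\Phi(C)$.

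One may assume $\beta>0$, since $\beta=0$ together with primitivity forces $E=\pm F=E'$ and the statement is trivial. Under this assumption, Lemma~\ref{lemma:int}'s criterion states that $E$ is nef if and only if $EC\ge 0$ for every such $C$; since $\Phi$ is a bijection on the relevant set and preserves the pairing, $E$ is nef if and only if $E'$ is nef, which settles the genus~$1$ fibration equivalence.

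For the elliptic fibration refinement, suppose $|E|$ admits an irreducible $(-2)$-curve $S$ as a section, so $ES=1$. Writing $S=[x,y,z]$, necessarily $y>0$: otherwise $S$ would lie inside a fiber of $|F|$, but by assumption every such fiber is irreducible of class $F$ with $F^2=0\ne-2$. Then $\Phi(S)$ is an effective $(-2)$-class with $E'\cdot\Phi(S)=1$; decomposing it into irreducible components $\sum n_iC_i$ and using the nefness of $E'$, exactly one $C_j$ appears with $n_j=E'C_j=1$, and this $C_j$ dominates the base of $|E'|$. The genus formula then forces $C_j$ to be a smooth rational curve, hence a section of $|E'|$. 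The converse follows by the symmetric argument with $\Phi^{-1}$. The main subtle point I anticipate is extracting the irreducible section from $\Phi(S)$ in this last step; everything else is a routine computation centered on the identity $v'=v$.
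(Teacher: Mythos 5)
Your proof is correct and follows essentially the same route as the paper: the paper replaces one coordinate of $z$ at a time by $z_i+y$ and observes that $v=y\gamma-\beta z$ is unchanged, which is exactly your substitution $z\mapsto z+yw$ with $w=(\gamma'-\gamma)/\beta$ packaged as a single bijection $\Phi$. You also spell out the transfer of a section (extracting an irreducible component $C_j$ with $E'C_j=1$), which the paper dismisses with ``analogously''; that added detail is sound.
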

\begin{proof}
Say $\gamma=[\gamma_1,\ldots,\gamma_r]$ and assume $\gamma'=[\gamma_1+\beta,\gamma_2,\ldots,\gamma_r]$; $\alpha'$ is an integer by equation (\ref{eq:div}), since $\gamma',\gamma$ are congruent modulo $\beta$. Then $E$ is nef if and only if $E'$ is nef: indeed, if there exists an effective $C=[x,y,z]$ such that $C^2=-2$ and  $EC=m<0$, by Lemma \ref{lemma:int} we have
$$-\frac{1}{2}\|v\|_L=\beta(\beta+my),$$
where $v=y\gamma-\beta z$. If we put $z'=[z_1+y,z_2,\ldots,z_r]$, clearly $v=y\gamma'-\beta z'$ doesn't change, so $E'C'=m<0$, where $C'=[x',y,z']$. Analogously, $E$ has a section if and only if $E'$ has one. We conclude repeating the same argument for each coordinate of $\gamma$.
\end{proof}

The following theorem will be one of our main tools to prove that many K3 surfaces have positive entropy. First, let $X$ be an elliptic K3 surface with $\NS(X)=U\oplus L$. For all primitive sublattices $L'$ of $L$ there exists an elliptic K3 surface $X'$ with N\'eron-Severi lattice $\NS(X')=U\oplus L'$. This follows from the surjectivity of the period map for K3 surfaces (cf. \cite{todorov}, Theorem 1), since $U\oplus L' \hookrightarrow U\oplus L \hookrightarrow \Lambda_{K3}$ embeds primitively into the K3 lattice. The goal of the following theorem is to relate the entropy of $X$ to the entropy of $X'$. For simplicity we will say that $\NS(X)$ has positive entropy if $X$ has positive entropy (since having positive entropy only depends on the N\'eron-Severi lattice).

\begin{teo} \label{teo:extension}
Let $X$ be a K3 surface admitting an elliptic fibration with only irreducible fibers, $\NS(X)=U\oplus L$. Assume further that there exists a primitive sublattice $L'$ of $L$ of corank $1$ such that $U\oplus L'$ has positive entropy. Then $X$ has positive entropy if one of the following conditions holds:
\begin{itemize}
    \item $|\det(L)| > 2|\det(L')|$
    \item $|\det(L)| = 2|\det(L')|$ and $\rho(X) \le 10$.
\end{itemize}
\end{teo}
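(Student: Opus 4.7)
The plan is to use Oguiso's criterion (Theorem \ref{teo:introfac}): a K3 surface with infinite automorphism group has positive entropy if and only if it admits at least two distinct genus-$1$ fibrations with infinite automorphism group. The decomposition $\NS(X) = U \oplus L$ with $L$ root-free already provides one such fibration: the class $F = [1,0,0]$ induces an elliptic fibration with only irreducible fibers, and its Mordell-Weil rank equals $\rho(X) - 2 \ge 1$, so $\Aut(X,|F|)$ is infinite. Hence it suffices to produce a second genus-$1$ fibration $|F''|$ on $X$ with infinite automorphism group, which together with $|F|$ would force positive entropy via Theorem \ref{teo:introfac}.

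The positive entropy of $U \oplus L'$ supplies a candidate. Applying Theorem \ref{teo:introfac} to a K3 surface $X'$ with $\NS(X') = U \oplus L'$, one obtains a primitive isotropic class $F' = [\alpha, \beta, \gamma'] \in U \oplus L'$, distinct from $F$, which is nef on $X'$ and induces a genus-$1$ fibration with infinite $\Aut(X',|F'|)$. Via the primitive embedding $U \oplus L' \hookrightarrow U \oplus L$ the class $F'$ stays primitive and isotropic in $\NS(X)$, though it may lose nefness or have a smaller automorphism group there. I would then use Proposition \ref{prop:periodic} to modify the last coordinate by an element of $\beta L$, producing a primitive isotropic class $F''$ that still induces a genus-$1$ fibration on $X$. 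The freedom to perturb into the orthogonal direction $N = (L')^\bot_L$ is exactly what the determinant hypotheses control: writing $e$ for a generator of $N$ with $e^2 = -2n$ and $k = [L : L' \oplus N]$ for the glue index, the relation $2n \,|\det(L')| = k^2 \,|\det(L)|$ translates $|\det(L)| > 2|\det(L')|$ into $n > k^2$ and $|\det(L)| = 2|\det(L')|$ into $n = k^2$.

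Using Lemma \ref{lemma:int}, both the nefness of $F''$ on $X$ and the existence of a section for $|F''|$ can be reformulated as norm conditions involving $n$, $k$, and the embedded lattice $L$; in the strict-inequality regime there is enough slack to arrange these directly. The main obstacle is proving that $\Aut(X,|F''|)$ is infinite, equivalently that the orthogonal complement $(F'')^\bot_{\NS(X)}/\Z F''$ is not a full-rank root-overlattice (otherwise the Mordell-Weil rank of $|F''|$ would vanish). In the borderline case $|\det(L)| = 2|\det(L')|$ this is where the rank hypothesis enters: the orthogonal complement has rank $\rho(X) - 2 \le 8$, so Proposition \ref{prop:rootgenus} (combined with the bounds of Theorem \ref{teo:deltar}) forces any lattice in the genus of a root-overlattice to itself be a root-overlattice, ruling out the parasitic collapse. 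I expect the main technical difficulty to be coordinating the periodicity perturbation with the simultaneous demands of nefness of $F''$ and positivity of its Mordell-Weil rank, and the two determinant regimes are precisely the thresholds at which this coordination becomes possible.
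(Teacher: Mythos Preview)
Your overall strategy matches the paper's: take a second nef isotropic class $E=[\alpha,\beta,\gamma,0]$ on $X'$ (with $\NS(X')=U\oplus L'$) coming from the positive entropy hypothesis, push it into $\NS(X)=U\oplus L$, and verify that it still gives an elliptic fibration with positive Mordell--Weil rank. Your treatment of the borderline case via Proposition~\ref{prop:rootgenus} is also correct and is exactly what the paper does.

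The gap is in the central step. You write that $F'$ ``may lose nefness'' on $X$ and propose to repair this by perturbing the last coordinate via Proposition~\ref{prop:periodic}. But that proposition says precisely that nefness is \emph{invariant} under such $\beta$-periodic shifts: if $F'$ is not nef on $X$, no translate $F''=F'+\beta v$ will be nef either. So the perturbation cannot produce a nef class from a non-nef one, and the phrase ``there is enough slack to arrange these directly'' is where the argument actually has to happen but does not.

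The paper's resolution is that no perturbation is needed: the class $E=[\alpha,\beta,\gamma,0]$ is \emph{already} nef on $X$. One writes $L$ in block form with $L'$ as the top-left minor, an off-diagonal column $M$, and bottom-right entry $-2k$; then for an effective $(-2)$-class $C=[x,y,z,w]$ with $w\ne 0$, Lemma~\ref{lemma:int} reduces $EC\ge 0$ to a quadratic in $(z,w)$. Minimising over $z$ leaves the inequality $\tfrac12 {}^tM(L')^{-1}M+k-1/w^2\ge 0$, which is exactly the Schur-complement condition that the matrix $P=L+2e_{r}e_{r}^{t}$ be negative semidefinite; and $\det(P)=\det(L)+2\det(L')$ shows this is equivalent to $|\det(L)|\ge 2|\det(L')|$. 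Strict inequality makes $P$ negative definite, so the inequality is strict and $|E|$ has only irreducible fibers; in the equality case $|E|$ may acquire reducible fibers, but since $\langle E,S_E\rangle^\perp$ lies in the genus of $L$ and $\rk L\le 8$, Proposition~\ref{prop:rootgenus} forbids it from being a root-overlattice. Your reformulation via the orthogonal complement $N=(L')^\perp_L$ and the glue index $k$ is correct ($|\det(L)|\ge 2|\det(L')|\Leftrightarrow n\ge k^2$) but is a detour; the Schur-complement computation in the non-orthogonal block basis is what makes the determinant hypothesis bite.
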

\begin{proof}
Let us fix a basis for $L$ and consider $L$ as a matrix. We can write
$$L=\left(\begin{array}{c|c}
L' &M\\
\hline
{^t}M &-2k
\end{array}\right).$$
We denote by $[x,y,z,w]\in \NS(X)$ the divisor with coordinates $x,y$ wrt $U$, $z$ wrt $L'$ and $w$ wrt to $\langle -2k \rangle$. By assumption there exists a primitive, effective divisor $E=[\alpha,\beta,\gamma,0]\in \NS(X)$ with $E^2=0$ such that $EC\ge 0$ for all effective $(-2)$-curves $C=[x,y,z,0]\in \NS(X)$. We want to show that $E$ is actually nef. By Lemma \ref{lemma:int} we have that the intersection of $E$ with an effective $(-2)$-curve $C=[x,y,z,w]$ is (up to a positive constant)
$$-\frac{1}{2}{^t}(y\gamma  - \beta z)L'(y\gamma  -\beta z)+ {^t}M(y\gamma -\beta z)\cdot (\beta w) + k\beta^2w^2-\beta^2. $$
We already know that this number is (strictly) positive if $w=0$, so fix any $w\ne 0$. By standard theory of quadratic forms we know that the minimum of the previous expression is attained at $L'v=\beta w M$, and this minimum is in fact
$$\frac{1}{2}\beta^2 w^2 {^t}ML'^{-1}M+k\beta^2w^2 -\beta^2.$$
After dividing by $\beta^2 w^2$, we want to show that
\begin{equation} \label{eq:schur}
\frac{1}{2} {^t}ML'^{-1}M+k -\frac{1}{w^2}\ge 0.  
\end{equation}
It then suffices to prove this inequality for $w^2=1$. Now consider the matrix
$$P=\left(\begin{array}{c|c}
L' &M\\
\hline
{^t}M &-2k+2
\end{array}\right).$$
$L'$ is negative definite, so $P$ is negative semidefinite if and only if $\det(P)$ is opposite in sign to $\det(L')$ (or $0$). But
$$\det(P)=\det(L)+2\det(L'),$$
and $\det(L)$ is in fact opposite in sign to $\det(L')$, hence by the inequality $|\det(L)|\ge 2|\det(L')|$ also $\det(P)$ is opposite in sign to $\det(L')$, i.e. $P$ is negative semidefinite. From the theory of Schur complement this implies that
$$(-2k+2)-{^t}ML'^{-1}M \le 0,$$
which is the desired inequality (\ref{eq:schur}) in the case $w^2=1$.
$E$ induces an elliptic fibration on $X$, since it already had a section on $U\oplus L'$. However, if $|\det(L)| > 2|\det(L')|$, then as above we can conclude that $P$ is actually negative \textit{definite}, so the minimum of the expression in equation (\ref{eq:schur}) is strictly positive, and $E$ has maximal rank. If instead $|\det(L)| = 2|\det(L')|$, $E$ just induces a primitive embedding $i:U \hookrightarrow U\oplus L$. The unimodularity of $i(U)\cong U$ gives an isomorphism $i(U)\oplus i(U)^\bot\cong U\oplus L$, hence $i(U)^\bot$ is in the genus of $L$. Consequently by Proposition \ref{prop:rootgenus} $i(U)^\bot$ cannot be a root-overlattice, since $\rho(X) \le 10$, so $E$ induces an elliptic fibration with infinitely many sections.
\end{proof}

\begin{oss}
This criterion is unfortunately not sharp, as we will see later (see for instance Proposition \ref{prop:counterex}). However this is sufficient to allow us to work with a finite number of lattices: indeed, we will show in Algorithm \ref{alg:lista} that the condition $|\det(L)|>2|\det(L')|$ for a certain primitive sublattice $L'$ of $L$ is satisfied if the determinant $|\det(L)|$ is big enough.
\end{oss}

We now furnish an effective criterion to prove that a certain K3 surface $X$ has infinitely many elliptic fibrations with an infinite number of sections.

\begin{lemma} \label{lemma:aut}
Let $X$ be an elliptic K3 surface, $|F|$ an elliptic fibration on $X$, and $f\in \Or^+(\NS(X))$.
\begin{enumerate}
    \item If $E=f(F)$ is nef, then there exists $s\in W<\Or^+(\NS(X))$ such that $g=f\circ s$ preserves the nef cone and $g(F)=E$.
    \item If $|F|$ has only irreducible fibers and $E=f(F)$ is nef, then $f$ preserves the nef cone.
    \item If $f$ preserves the nef cone and the set of Hodge isometries $\Or_{Hdg}(\T(X))=\{\pm \id\}$ is trivial, then $f$ corresponds to an automorphism of $X$ if and only if $\pm \id=\overline{f}\in \Or(A_{\NS(X)})$.
\end{enumerate}
\end {lemma}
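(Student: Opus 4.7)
The proof splits naturally into three independent arguments.

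For part (1), the plan is to combine the simple transitivity of $W$ on chambers with the normality of $W$ in $\Or^+(\NS(X))$ (which follows from $g s_\delta g^{-1} = s_{g(\delta)}$). Let $w_0 \in W$ be the unique element with $w_0 \cdot f(\mathrm{Nef}(X)) = \mathrm{Nef}(X)$, and set $s := f^{-1} w_0 f \in W$. Then $f \circ s = w_0 \circ f$ preserves the nef cone by construction. To verify $s(F) = F$ it suffices to show $w_0(E) = E$: both $E = f(F)$ (nef by hypothesis) and $w_0(E)$ (nef, since $w_0 \circ f$ preserves the nef cone and $F$ is nef) lie in the closure of the Kähler cone, and they lie in the same $W$-orbit. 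Uniqueness of the nef representative in each $W$-orbit then forces $w_0(E) = E$, hence $s(F) = f^{-1}w_0(E) = F$.

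For part (2), I would apply (1) to obtain $s \in W$ with $s(F) = F$ and $f \circ s$ preserving the nef cone, and then show $s = \id$ by proving that the stabilizer of $F$ in $W$ is trivial. By a standard Coxeter-theoretic fact, this stabilizer is generated by the reflections $s_\delta$ with $\delta \cdot F = 0$. But the irreducibility of the fibers of $|F|$ ensures that $L = \langle F, S_0\rangle^\perp$ has no $(-2)$-roots, and any $(-2)$-class $\delta = [x,y,z] \in U \oplus L$ with $\delta \cdot F = 0$ satisfies $y = 0$, so $\delta^2 = z^2 = -2$ with $z \in L$ — contradiction. Hence the stabilizer is trivial, $s = \id$, and $f$ itself preserves the nef cone.

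For part (3), the claim is essentially a rephrasing of Proposition \ref{prop:tutto}(2). Since $f$ preserves the nef cone it permutes the extremal rays spanned by smooth rational curves, so $f \in \Or_{\Delta^+}(\NS(X))$. The proposition then identifies lifts of $f$ to $\Aut(X)$ with (Hodge) isometries $\beta \in \Or(\T(X))$ satisfying $\overline{\beta} = \overline{f}$ on the discriminant group. Under the assumption that the only Hodge isometries of $\T(X)$ are $\pm \id$, the only possible values of $\overline{\beta}$ are $\pm \id \in \Or(A_{\NS(X)})$, so the condition becomes $\overline{f} = \pm \id$.

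The main obstacle is the identity $w_0(E) = E$ in part (1). The excerpt states uniqueness of the nef representative in a $W$-orbit only for elements of the open positive cone $\mathcal{C}_X$, whereas $E$ sits on its boundary ($E^2 = 0$). A small supplementary argument is required — either by perturbing $E$ to $E + \varepsilon h$ for $h$ ample and letting $\varepsilon \to 0$, or by appealing to the standard fact that the closure of the Kähler cone is a strict fundamental domain for the $W$-action on $\overline{\mathcal{C}_X}$ — to extend the uniqueness principle to isotropic nef vectors. The remaining Coxeter-group input in part (2) is also a point where one must invoke a result slightly beyond the excerpt, though it is classical.
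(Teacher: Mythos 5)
Your proposal is correct in substance, but it proves parts (1) and (2) by a genuinely different route. The paper's argument is elementary and self-contained: for an effective $(-2)$-curve $C$, Riemann--Roch forces $f^{-1}(C)$ or $-f^{-1}(C)$ to be effective, and if $D=-f^{-1}(C)$ is effective then $0\ge -DF=Cf(F)\ge 0$ shows $DF=0$, so $D$ lies in a reducible fiber of $|F|$; composing $f$ with $s_D$ fixes this $C$ without changing $f(F)$, and the process terminates because $|F|$ has finitely many reducible fibers. Part (2) is then immediate, since with no reducible fibers there are no such $D$ and $f^{-1}$ already preserves effective $(-2)$-curves. Your route instead invokes three pieces of reflection-group machinery: normality of $W$ in $\Or^+(\NS(X))$, the strict fundamental domain property extended to isotropic boundary classes, and the fact that the stabilizer of a point of the closed fundamental chamber is generated by the reflections fixing it. All three are true and classical, and you honestly flag the latter two; but your justification of normality is too quick as written: $g s_\delta g^{-1}=s_{g(\delta)}$ only shows that conjugation sends $s_\delta$ to a reflection in a $(-2)$-class, and $g(\delta)$ need not be the class of an irreducible rational curve, so $s_{g(\delta)}\in W$ is not immediate from the paper's definition of $W$. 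One must first identify $W$ with the group generated by reflections in \emph{all} $(-2)$-classes (which follows from simple transitivity of the full reflection group on chambers together with transitivity of the nodal subgroup). The paper's hands-on argument sidesteps all of these inputs, at the cost of being specific to the K3 situation; your argument is more conceptual and would generalize, but it imports more external theory than the statement requires. Part (3) of your proposal matches the paper's proof essentially verbatim (and your derivation of $f\in\Or_{\Delta^+}(\NS(X))$ directly from preservation of the nef cone is arguably cleaner than the paper's appeal to a power of $f$ being an automorphism).
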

\begin{proof}
\begin{enumerate}
    \item Let $C$ be an effective $(-2)$-curve; $f^{-1}$ is an isometry, so $(f^{-1}(C))^2=-2$, hence by Riemann-Roch either $f^{-1}(C)$ or $-f^{-1}(C)$ is effective. Assume that $D=-f^{-1}(C)$ is effective. Then
    $$0 \ge -DF= f^{-1}(C)F=Cf(F)\ge 0,$$
    so necessarily $DF=0$, or equivalently $D$ is contained in a reducible fiber of the elliptic pencil $|F|$. Composing $f$ with the reflection $s_D$ across $D$ yields $f'=f\circ s_D$ such that $f'(F)=f(F)=E$ and $(f')^{-1}(C)=s_D(-D)=D$. Repeating the process we obtain a $g=f\circ s\in \Or^+(\NS(X))$ such that $g(F)=E$ and $g^{-1}$ preserves the set of effective $(-2)$-curves (the process ends since there are only finitely many reducible fibers in the elliptic pencil $|F|$). Now $g$ preserves the nef cone, since if $D$ is a nef element,
    $$f(D)C=Df^{-1}(C)\ge 0$$
    for all effective $(-2)$-curves, since $f^{-1}(C)$ is effective.
    \item It is a special case of the previous point, since there are no $(-2)$-curves orthogonal to $F$.
    \item By Proposition \ref{prop:tutto} a power of $f$ corresponds to an automorphism of $X$. Then $f$ preserves the set of smooth $(-2)$-curves, since one of its powers does and $f$ preserves the nef cone. Therefore, again by Proposition \ref{prop:tutto}, $f$ corresponds to an automorphism of $X$ if and only if $\overline{f}\in \Or(A_{\NS(X)})$ coincides with the restriction of a $g \in \Or_{Hdg}(\T(X))=\{\pm \id\}$.
\end{enumerate}
\end{proof}

\begin{oss} \label{oss:general}
From \cite{oguiso2}, Lemma 4.1, we know that the assumption $\Or_{Hdg}(\T(X))=\{\pm \id\}$ is always satisfied if $X$ has odd Picard rank. Moreover, if $X$ has even Picard rank and the period $\omega_X\in \T(X)_\C$ is very general, then again $\Or_{Hdg}(\T(X))=\{\pm \id\}$. Indeed, any Hodge isometry of the trascendental lattice $\T(X)$ has $\omega_X$ as an eigenvector, so it suffices to choose $\omega_X$ outside the countable union of lines in $\T(X)_\C$ corresponding to the eigenvectors of isometries in $\Or(\T(X))$.
\end{oss}

\begin{teo} \label{teo:unique}
Let $X$ be a K3 surface, $|F|$ an elliptic fibration on $X$ inducing the decomposition $\NS(X)=U\oplus L$. If the Picard rank $\rho(X)$ is even, assume that the period $\omega_X\in \T(X)_\C$ is very general. Then $X$ admits a unique elliptic fibration up to automorphisms if and only if $L$ is unique in its genus and the restriction map $\Or(L)\rightarrow \Or(A_L)$ is surjective.
\end{teo}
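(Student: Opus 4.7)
The plan is to reduce the statement to a comparison of primitive embeddings $U\hookrightarrow \NS(X)$. Each elliptic fibration $|F_i|$ on $X$, together with a choice of section $S_i$, induces such an embedding whose orthogonal complement $L_i$ is an even negative definite lattice in the genus of $L$. Conversely, classical results of Nikulin on lattices (applied to $\NS(X)=U\oplus L$, which is indefinite of rank $\rho(X)\ge l(A_L)+2$ since $\rk L\ge l(A_L)$) guarantee that $U\oplus L$ is unique in its genus, and therefore $U\oplus L\cong U\oplus L'$ for every $L'$ in the genus of $L$. Combining this with the surjectivity of the period map and a Weyl-chamber correction via Lemma~\ref{lemma:aut}(1), every $L'$ in the genus of $L$ is realized as the orthogonal complement of some primitive $U\hookrightarrow \NS(X)$ coming from an elliptic fibration on $X$.

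Next I would compare two fibrations $|F_1|,|F_2|$ whose frames are both isomorphic to $L$, by picking some $\phi\in\Or(\NS(X))$ with $\phi(F_1)=F_2$ and tracking its induced action $\bar\phi\in\Or(A_{\NS(X)})=\Or(A_L)$. The key computation is that any isometry of $\NS(X)$ fixing $F_2$ preserves $F_2^\perp$, descends modulo $\langle F_2\rangle$ to an isometry of the frame $\cong L$, and acts on $A_L$ precisely via the image of this descended isometry under $\Or(L)\to\Or(A_L)$. Consequently $\bar\phi$ is well-defined modulo $\mathrm{im}(\Or(L)\to\Or(A_L))$, and by Proposition~\ref{prop:tutto}(2) combined with Remark~\ref{oss:general} (which reduces $\Aut(X)$ to isometries acting as $\pm\id$ on $A_{\NS(X)}$), the fibrations $|F_1|$ and $|F_2|$ are $\Aut(X)$-equivalent exactly when $\bar\phi\in \mathrm{im}(\Or(L)\to\Or(A_L))$; the $\pm$-ambiguity is absorbed because $-\id_L\in\Or(L)$. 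Once such a $\phi$ is found with $\bar\phi=\pm\id$, Lemma~\ref{lemma:aut}(1) modifies it to preserve the nef cone without changing $\bar\phi$ (reflections by roots act trivially on the discriminant), and Lemma~\ref{lemma:aut}(3) then produces an actual automorphism of $X$ sending $|F_1|$ to $|F_2|$.

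With this dictionary, both directions follow. For $(\Leftarrow)$, uniqueness in the genus ensures every fibration has frame $\cong L$, while surjectivity of $\Or(L)\to\Or(A_L)$ automatically places $\bar\phi$ in the required image. For $(\Rightarrow)$: if $L$ is not unique in its genus, pick $L'\not\cong L$ in the genus and use the first paragraph to exhibit a fibration with non-isomorphic frame, which cannot be $\Aut(X)$-equivalent to $|F|$; if $\Or(L)\to\Or(A_L)$ is not surjective, pick $\sigma\in\Or(A_L)$ outside its image and invoke Nikulin's surjectivity of $\Or(\NS(X))\to\Or(A_{\NS(X)})$ (valid in the same rank range) to find $\psi\in\Or(\NS(X))$ with $\bar\psi=\sigma$; the fibration induced by $\psi(F)$, after a Weyl correction which leaves $\bar\psi$ unchanged, violates the criterion of the second paragraph and is therefore not $\Aut(X)$-equivalent to $|F|$. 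The main technical obstacle is the descent calculation in the second paragraph, identifying $\mathrm{im}(\mathrm{Stab}_{\Or(\NS(X))}(F_2)\to\Or(A_L))$ with $\mathrm{im}(\Or(L)\to\Or(A_L))$; everything else reduces to citing the Nikulin-type results on genera and to the lemmas already proved in the section.
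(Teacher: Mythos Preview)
Your strategy matches the paper's proof almost step for step: both directions are argued by translating the problem into primitive embeddings $U\hookrightarrow\NS(X)$, invoking Nikulin's results to realize every $L'$ in the genus and to lift any $\sigma\in\Or(A_L)$ to $\Or(\NS(X))$, correcting by Weyl reflections (which act trivially on $A_L$), and then applying Lemma~\ref{lemma:aut} together with the $\Or_{Hdg}(\T(X))=\{\pm\id\}$ hypothesis. Two small remarks: the appeal to ``surjectivity of the period map'' in your first paragraph is unnecessary, since you are working on a fixed $X$ and only need the abstract isometry $U\oplus L\cong U\oplus L'$; and your choice to work with the frame $F^\perp/\langle F\rangle$ rather than $\langle F,S_0\rangle^\perp$ is a clean way to sidestep a point the paper handles explicitly, namely showing (via reflections in vertical $(-2)$-curves) that after the Weyl correction the orthogonal complement of $\langle E,S_E\rangle$ for an \emph{irreducible} section $S_E$ is still isometric to the given $L'$.
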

\begin{proof}
First assume that the restriction map $\Or(L)\rightarrow \Or(A_L)$ is not surjective, and let $\varphi\in \Or(A_L)$ be an isometry of $A_L$ not in the image of the restriction map. By \cite{huybrechts1}, Theorem 14.2.4 we have an $f\in \Or(\NS(X))$ such that $\overline{f}=\varphi \in \Or(A_L)=\Or(A_{\NS(X)})$. Up to composing $f$ with a finite number of elements in $W$, we can assume that $E=f(F)$ is nef, and hence that it induces an elliptic fibration. Notice that the Weyl group $W$ acts trivially on $A_{\NS(X)}$, so we still have that $\overline{f}=\varphi \in \Or(A_L)=\Or(A_{\NS(X)})$. We want to prove that $E$ and $F$ induce distinct elliptic fibrations under the action of $\Aut(X)$. Assume by contradiction that there exists $g\in \Aut(X)$ such that $g^*(F)=E$. Then $h=(g^*)^{-1}\circ f$ preserves the elliptic fibration $|F|$. Up to composing with a translation in $\MW(F)$, we can assume that $h$ preserves the lattice $U$ generated by $F$ and its zero section; hence $h\in \Or(L)$ is an isometry of the orthogonal complement of $U$. By the generality assumption on $\omega_X$, Remark \ref{oss:general} and Lemma \ref{lemma:aut} imply that $\overline{(g^*)^{-1}}=\pm \id \in \Or(A_{\NS(X)})=\Or(A_L)$. Hence $\overline{h}=\pm \overline{f}=\pm \varphi$ does not lift to an isometry of $L$, a contradiction.\\
Secondly assume that $L$ is not unique in its genus, and let $S$ be a lattice in the genus of $L$ not isomorphic to $L$. By \cite{nikulin9}, Proposition 1.5.1, we have an embedding $j:S \hookrightarrow \NS(X)$ such that $j(S)^\bot = U$. Assume that $j(S)^\bot= \langle E,C \rangle$, where $E^2=0$, $C^2=-2$ and $EC=1$. After applying a certain (finite) number of isometries $s_i\in W < \Or(\NS(X))$, we can assume that $E$ is nef, and induces an elliptic fibration on $X$ with respect to which $\NS(X)=\langle E,C \rangle \oplus S$, since the elements in the Weyl subgroup do not change the intersections. $C$ is effective, since $EC=1>0$, so $E$ has at least a section $S_E$ (a certain irreducible component of $C$). We want to prove that $S$ is isometric to the orthogonal complement of $\langle E,S_E\rangle$. Since $C$ is an effective $(-2)$-curve, we can write
$$C=S_E + \sum_{i,j}{C_i^{(j)}},$$
with $C_i^{(j)}$ a vertical $(-2)$-curve for every $i,j$, and $j$ indexing the reducible fibers of the fibration induced by $E$. Since
$$-2=C^2=S_E^2+2S_E \left(\sum_{i,j}{C_i^{(j)}}\right) + \sum_{j}{\left(\sum_i{C_i^{(j)}}\right)^2},$$
and the intersection form restricted to the $(-2)$-curves of a reducible fiber not intersecting $S_E$ is negative definite, we conclude that for all $j$ there exists one and only one $i$ such that $S_E  C_i^{(j)}=1$. Hence applying the reflections $s_{C_i^{(j)}} \in W$ we keep $E$ fixed and we map $S_E$ into $S_E'=S_E+\sum_j{C_i^{(j)}} $. Using the same argument for $S_E'$, we conclude that $C$ and $S_E$ are conjugated under the action of the Weyl group $W$, and so $\langle E,S_E\rangle^\bot\cong \langle E,C\rangle^\bot \cong S$. Certainly $E,F$ are distinct up to automorphism, since the two orthogonal complements $L,S$ are not isometric.\\
Finally we have to prove the converse. So assume that $L$ is unique in its genus and the restriction map $\Or(L)\rightarrow \Or(A_L)$ is surjective. Let $|E|$ be another elliptic fibration on $X$. By the same reasoning as above, $U_E^\bot=\langle E,S_E\rangle^\bot \subseteq \NS(X)$ is in the genus of $L$, hence it is isometric to $L$ by assumption. This gives us an $f\in \Or^+(\NS(X))$ such that $f(F)=E$. The restricted isometry $\overline{f}\in \Or(A_L)$ comes by assumption from a $\varphi\in \Or(L)$, so we obtain $g\in \Or^+(\NS(X))$ such that $g|_{U_E}=\id$ and $g|_L=\varphi$. Now $h=f \circ g^{-1} \in \Or^+(\NS(X))$ is such that $h(F)=E$ and $\overline{h}=\id \in \Or(A_L)$, so by Lemma \ref{lemma:aut} we have a $h'=h\circ s\in \Or^+(\NS(X))$ such that $h'(F)=E$, $h'$ preserves the nef cone and $\overline{h'}=\id \in \Or(A_L)$, so $h'$ is an automorphism of $X$.
\end{proof}

\begin{oss}
Notice that the converse implication does not need the generality assumption on the period.
\end{oss}

In a completely analogous manner we can prove:

\begin{teo} \label{teo:unique2}
Let $X$ be a K3 surface with $\NS(X)=U\oplus L$, where $L$ is not a root-overlattice. If the Picard rank $\rho(X)$ is even, assume that the period $\omega_X\in \T(X)_\C$ is very general. Then $X$ admits a unique elliptic fibration with infinitely many sections up to automorphisms if and only if $L$ is the unique non-root-overlattice in its genus and the restriction map $\Or(L)\rightarrow \Or(A_L)$ is surjective.
\end{teo}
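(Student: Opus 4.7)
The plan is to adapt the proof of Theorem \ref{teo:unique} by restricting attention throughout to elliptic fibrations $|E|$ whose trivial lattice $\langle E,S_E\rangle$ has orthogonal complement that is not a root-overlattice. By the Shioda-Tate formula this is exactly the condition that $|E|$ has infinite Mordell-Weil group, so Theorem \ref{teo:unique2} is the ``infinitely many sections''-version of Theorem \ref{teo:unique}, and the previous argument should transfer with only small modifications at each step.

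For the ``if'' direction, I would start with any elliptic fibration $|E|$ on $X$ with infinitely many sections and zero section $S_E$. The unimodularity of $\langle E,S_E\rangle\cong U$ yields the orthogonal decomposition $\NS(X)=\langle E,S_E\rangle\oplus\langle E,S_E\rangle^\bot$, so $\langle E,S_E\rangle^\bot$ lies in the genus of $L$; moreover it is not a root-overlattice, precisely because $\MW(E)$ has positive rank. By hypothesis it is therefore isometric to $L$, and then the last paragraph of the proof of Theorem \ref{teo:unique} carries over verbatim: produce $f\in\Or^+(\NS(X))$ sending $F$ to $E$, adjust by a lift $\varphi\in\Or(L)$ of $\overline f$ so that the induced isometry on $A_L$ becomes trivial, and apply Lemma \ref{lemma:aut} (after a final Weyl reflection) to promote the resulting isometry to an automorphism of $X$.

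For the ``only if'' direction, I would argue by contrapositive in each of the two cases. First suppose $\Or(L)\to \Or(A_L)$ is not surjective and pick $\varphi\in\Or(A_L)$ not in the image; lift $\varphi$ to $f\in\Or(\NS(X))$ using \cite{huybrechts1}, Theorem 14.2.4, and compose with reflections in the Weyl group $W$ so that $E:=f(F)$ is nef. Then $f(S_0)$ is effective since $E\cdot f(S_0)=1$ and $E$ is nef, so $|E|$ is an elliptic fibration; since $f$ is an isometry, $\langle E,f(S_0)\rangle^\bot\cong L$ is not a root-overlattice, and the further Weyl reflections in vertical $(-2)$-curves that transform $f(S_0)$ into the actual zero section $S_E$ all fix $E$ and preserve the isometry class of the orthogonal complement. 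Hence $|E|$ has infinitely many sections, and the argument in Theorem \ref{teo:unique}, together with Remark \ref{oss:general}, shows that $|E|$ and $|F|$ are inequivalent under $\Aut(X)$, contradicting uniqueness. Next suppose there exists a non root-overlattice $S$ in the genus of $L$ with $S\not\cong L$; by \cite{nikulin9}, Proposition 1.5.1, embed $S$ primitively in $\NS(X)$ with $S^\bot=U=\langle E,C\rangle$, and after reflections assume $E$ is nef. The Weyl-reflection argument of Theorem \ref{teo:unique} identifies $\langle E,S_E\rangle^\bot$ with $S$, so $|E|$ has infinitely many sections (as $S$ is not a root-overlattice) and is inequivalent to $|F|$ under $\Aut(X)$ (since $S\not\cong L$), again contradicting uniqueness.

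The main obstacle will be keeping track, in both sub-arguments of the ``only if'' direction, of the fact that the elliptic fibrations produced genuinely have infinite Mordell-Weil group. This reduces to the isometry invariance of not being a root-overlattice and to the fact that the Weyl reflections used to straighten sections preserve the isometry class of the orthogonal complement of the trivial lattice, both of which are implicit in the analogous step of the proof of Theorem \ref{teo:unique}.
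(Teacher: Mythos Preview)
Your proposal is correct and follows exactly the route the paper intends: the paper's proof of Theorem \ref{teo:unique2} is simply the sentence ``in a completely analogous manner we can prove'', meaning one reruns the proof of Theorem \ref{teo:unique} restricted to fibrations whose frame lattice is not a root-overlattice. You have correctly identified and handled the one extra check needed, namely that the new fibrations produced in the ``only if'' direction still have $\langle E,S_E\rangle^\bot$ not a root-overlattice (hence infinite Mordell--Weil), via the Weyl-reflection argument from the second paragraph of the proof of Theorem \ref{teo:unique}.
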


\begin{oss}
As a corollary we have that, if $X$ is a K3 surface with an elliptic fibration $|F|$ with infinitely many sections, and $\NS(X)=U \oplus L$ is such that the restriction map $\Or(L)\rightarrow \Or(A_L)$ is not surjective, then $X$ has positive entropy (even without the generality assumption). Indeed, having positive entropy only depends on the N\'eron-Severi lattice $\NS(X)$, so this just follows from the previous result.
\end{oss}

We denote
$$N(X)=\#\{\text{elliptic fibrations on $X$}\}/\Aut(X),$$
$$N^{\text{pos}}(X)=\#\{\text{elliptic fibrations on $X$ with infinitely many sections}\}/\Aut(X).$$
Both these numbers are finite, since $N^{\text{pos}}(X) \le N(X)$ and $N(X)$ is always finite by \cite{sterk}, Proposition 2.6. Then Theorem \ref{teo:unique} identifies K3 surfaces with $N(X)=1$, while Theorem \ref{teo:unique2} identifies K3 surfaces with $N^{\text{pos}}(X)=1$. The condition $N^{\text{pos}}(X)>1$ is clearly sufficient for $X$ to have positive entropy. Moreover:

\begin{prop} \label{prop:rho10}
Let $X$ be a K3 surface with $\NS(X)=U\oplus L$, $L$ without roots, and $\rho(X) \le 10$. Then $N^{\text{pos}}(X)=N(X)$. In particular, if $L$ is not unique in its genus, $X$ has positive entropy.
\end{prop}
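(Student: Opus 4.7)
The plan is to show that every elliptic fibration on $X$ has positive Mordell--Weil rank, which gives $N^{\text{pos}}(X)=N(X)$. Let $|E|$ be an arbitrary elliptic fibration on $X$ with zero section $S_E$. Since $\langle E, S_E\rangle \cong U$ is unimodular, it splits off as a direct summand, giving an orthogonal decomposition $\NS(X) = \langle E, S_E\rangle \oplus L_E$ with $L_E$ an even negative definite lattice of rank $\rho(X)-2$. Because $U$ is unimodular, the discriminant form of $L_E$ agrees with that of $\NS(X)$, which in turn agrees with that of $L$; combined with the fact that $L_E$ has the same rank and signature as $L$, this shows that $L_E$ lies in the genus of $L$.

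Since $L$ has no roots by assumption and $\rk L = \rho(X) - 2 \le 8$, Proposition \ref{prop:rootgenus} implies that $L_E$ is not a root-overlattice, i.e.\ $\rk (L_E)_{\text{root}} < \rk L_E$. The sublattice $T_v \subseteq L_E$ generated by the non-identity components of reducible fibers of $|E|$ is a root sublattice, so $\rk T_v \le \rk (L_E)_{\text{root}}$. The Shioda--Tate formula then gives
$$\rk \MW(E) \;=\; \rk L_E - \rk T_v \;\ge\; \rk L_E - \rk (L_E)_{\text{root}} \;>\; 0,$$
so $|E|$ has infinitely many sections. As $|E|$ was arbitrary, $N^{\text{pos}}(X) = N(X)$.

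For the ``in particular'' statement, if $L$ is not unique in its genus then the construction in the ``secondly'' step of the proof of Theorem \ref{teo:unique}, which uses Nikulin's embedding theorem and does not require the generality of the period, produces a second elliptic fibration on $X$ whose orthogonal complement is a lattice in the genus of $L$ not isometric to $L$, hence not in the $\Aut(X)$-orbit of $|F|$. Thus $N(X)\ge 2$, and by the first part $N^{\text{pos}}(X)\ge 2$. By Theorem \ref{teo:introfac} and the corollary stated immediately after it, $X$ has positive entropy. The only step requiring slight care is the inclusion $T_v \subseteq (L_E)_{\text{root}}$, but this is immediate since the non-identity fiber components are themselves $(-2)$-classes orthogonal to both $E$ and $S_E$; everything else is a direct application of results already established in the paper.
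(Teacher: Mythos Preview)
Your proof is correct and follows essentially the same route as the paper's, just with the details spelled out. The paper's proof is a one-liner: it observes that an elliptic fibration $|E|$ has finitely many sections only if $\langle E,S_E\rangle^\bot$ is a root-overlattice, and then invokes Proposition~\ref{prop:rootgenus} to rule this out since $L_E$ lies in the genus of the root-free lattice $L$ of rank $\le 8$. You unpack exactly this argument via the Shioda--Tate formula and the inclusion $T_v\subseteq (L_E)_{\text{root}}$, and your handling of the ``in particular'' clause (appealing to the period-free ``secondly'' construction in Theorem~\ref{teo:unique}) is also what the paper intends, though it leaves this implicit.
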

\begin{proof}
This follows immediately from Proposition \ref{prop:rootgenus}, since elliptic fibrations with a finite number of sections are induced by those $E\in \NS(X)$ such that $\langle E,S_E \rangle^\bot$ is a root-overlattice.
\end{proof}

This becomes particularly powerful in view of the following result. A lattice is said \textit{primitive} if the greatest common divisor of the entries of its intersection matrix (with respect to any basis) is $1$.

\begin{teo}[\cite{watson2}, Theorem 1 - \cite{nebe}] \label{teo:watson}
Let $L$ be a definite lattice of rank $r\ge 2$ (not necessarily even). Assume that $L$ is unique in its genus. Then $r\le 10$, and there exists a complete (and finite) list of all the definite, primitive lattices of rank $2\le r \le 10$ unique in their genus.
\end{teo}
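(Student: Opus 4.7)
The plan is to combine the Minkowski--Siegel mass formula with explicit enumeration via reduction theory. The mass formula asserts that for a genus $\mathcal{G}$ of positive definite lattices of rank $r$ and determinant $d$,
$$\mathrm{Mass}(\mathcal{G}) := \sum_{L\in \mathcal{G}} \frac{1}{|\Or(L)|} = 2\,\pi^{-r(r+1)/4}\,\prod_{k=1}^{r} \Gamma(k/2)\cdot d^{(r+1)/2}\cdot \prod_{p}\beta_p(L),$$
where $\beta_p(L)$ is a local density at the prime $p$ (depending only on the $p$-adic genus of $L$). If $L$ is unique in its genus, the left-hand side equals $1/|\Or(L)|$, so controlling $|\Or(L)|$ from above and the mass from below forces constraints on $r$ and $d$.

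The first step would be to bound $r$. The gamma product $\prod_{k=1}^{r}\Gamma(k/2)$ grows super-exponentially, while $|\Or(L)|$ is bounded by a polynomial in $r$ times the order of the $(\pm 1)$-diagonal group (via Minkowski's bound on $\mathrm{GL}_r(\Z)$-torsion, or a Bieberbach-type estimate). For the primitive assumption, the local densities $\beta_p(L)$ can only be made small by a bounded amount (there is a universal lower bound of the form $\beta_p(L) \ge c_p$ with $\prod_p c_p$ convergent, using that primitive lattices cannot be too degenerate locally). Putting these together, one shows that for $r \ge 11$, $\mathrm{Mass}(\mathcal{G}) > 1/|\Or(L)|$ for every primitive lattice, forcing $|\mathcal{G}| \ge 2$.

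The second step, now that $r \le 10$, would be to bound the determinant $d$. A similar mass estimate shows that, for each fixed $r \in \{2, \ldots, 10\}$, there is an effective constant $D_r$ such that $d > D_r$ implies $\mathrm{Mass}(\mathcal{G}) > 1/|\Or(L)|$ for any $L$ of that determinant, hence $|\mathcal{G}| \ge 2$. This reduces the problem to finitely many pairs $(r,d)$. Within each such pair, one enumerates all genera using local invariants (Hasse symbols, $p$-adic Jordan decompositions) and, for each genus, enumerates Minkowski-reduced representatives; a lattice is unique in its genus precisely when this list has one element up to isometry. This is algorithmically feasible because the Minkowski-reduced region is compact.

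The main obstacle is making the mass-formula estimate effective at primes dividing $d$, where $\beta_p(L)$ can vary significantly with the fine $p$-adic Jordan structure of $L$; the uniform lower bounds $c_p$ above are typically non-trivial to establish and require separating the $p=2$ case from odd primes (because of the extra ``type'' invariants in dyadic Jordan decompositions). A second practical obstacle is the enumeration in rank $r \in \{8,9,10\}$, where the number of genera to inspect is large; this is precisely where Watson's original argument becomes a long case analysis, and where modern implementations in \texttt{Magma} or \texttt{PARI} would replace the bulk of the hand computation.
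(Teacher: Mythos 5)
This statement is Watson's classification theorem, which the paper imports as an external result (citing \cite{watson2} and relying on the published tables at \cite{nebe}); the paper contains no proof of it, so there is no internal argument to compare yours against. Your sketch follows the standard modern route to this result (mass formula plus enumeration, essentially the strategy used by Kirschmer and Lorch when they verified and completed Watson's list), whereas Watson's original argument is different in flavour: he introduced explicit transformations $\lambda_p$ that decrease the determinant at a prime $p$ without increasing the class number, used them to reduce any one-class genus to one of bounded determinant, and then enumerated. Your route is cleaner conceptually but computationally heavier at the local densities.

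As a proof, however, your outline has genuine gaps beyond the ones you flag. First, the claim that $|\Or(L)|$ is ``bounded by a polynomial in $r$ times the order of the $(\pm 1)$-diagonal group'' is false: already $|\Or(\Z^r)| = 2^r\, r!$, and Minkowski's bound on finite subgroups of $\GL_r(\Z)$ is super-exponential in $r$. This matters because the argument needs $\mathrm{Mass}(\mathcal{G}) > 1/|\Or(L)|$ for the \emph{specific} lattice $L$, not merely $\mathrm{Mass}(\mathcal{G}) > 1/2$; a genus can have class number $2$ with total mass far below $1/2$ (e.g.\ the genus of $\Z^{11}$), so the crude version of the argument proves nothing. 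One must therefore beat a lower bound on the mass against an upper bound on $|\Or(L)|$ of size roughly $(2r)!$, and the crossover landing exactly at $r=11$ is numerically delicate --- it depends on the competition between $\prod_k \Gamma(k/2)$ and $\pi^{-r(r+1)/4}$, which only tips in the right direction around $k \approx 2\pi$. Second, the uniform lower bounds $\beta_p(L)\ge c_p$ with convergent product are not a routine fact: the dyadic densities in particular depend on the full Jordan splitting with its type invariants, and establishing uniform bounds over all primitive $p$-adic genera is precisely where the real work lies. As written, the proposal is a correct high-level plan for the theorem but not a proof; since the paper treats the statement as a citation, the honest options are either to cite Watson (or Kirschmer--Lorch) as the paper does, or to carry out the local-density estimates in full.
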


This naturally divides our work into three parts:

\begin{enumerate}
    \item Classify K3 surfaces of zero entropy and Picard rank $3$.
    \item If $4\le \rho(X) \le 10$, the set of N\'eron-Severi lattices of K3 surfaces of zero entropy is a subset of the lattices of the form $U \oplus L$, where $L$ varies among the multiplies of the lattices in Watson's list. The list is available online at \cite{nebe}.
    \item If $\rho(X)>10$ then, except for very few cases in rank $11$ and $12$, every $L$ has many non-isometric lattices in its genus. However, some of them could be root-overlattices.
\end{enumerate}

In the next three sections we will analyze these three cases individually, in order to obtain a list of all the K3 surfaces of zero entropy admitting an elliptic fibration with only irreducible fibers.

\section{K3 surfaces of Picard rank $3$}

As we have already observed, a K3 surface admitting an elliptic fibration with infinitely many sections must have Picard rank at least $3$. \newline

In this section let $X$ be an elliptic K3 surface of Picard rank $\rho(X)=3$, and denote by $|F|$ an elliptic fibration on $X$ of maximal rank. The Néron-Severi lattice has the form
$$\NS(X)=U \oplus \langle -2k \rangle$$
for a certain $k \ge 2$, since the fibration $|F|$ has no reducible fibers. The goal of the section is to find the values of $k$ for which $X$ has zero entropy. Shimada in \cite{shimada1} presents an algorithm to compute the automorphism group of these K3 surfaces; our approach achieves less for a fixed $k$, since in most cases we are not able to describe the whole $\Aut(X)$ completely, but it gives informations about these surfaces for all $k$ at once.\newline

Most results proved in this section are contained in Nikulin's paper \cite{nikulin8}; however, since some of the ideas used will be useful later on, we have decided to include the proofs. Moreover, our approach is rather different from Nikulin's. Of course the classification of K3 surfaces of Picard rank $3$ admitting a unique elliptic (resp. genus $1$) fibration we independently obtain coincides with Nikulin's (cf. Theorem 3 and the subsequent discussion in \cite{nikulin8}).

\begin{lemma} \label{lemma:facile}
There exist isomorphisms
$$\Aut(X,|F|) \cong \MW(F) \rtimes \Z/2\Z \cong \Z/2\Z * \Z/2\Z.$$
In particular $\Aut(X)$ is infinite.
\end{lemma}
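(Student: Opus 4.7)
The plan is to verify the three claimed isomorphisms by combining an explicit geometric construction with a lattice calculation. Since $|F|$ has only irreducible fibers, the trivial lattice of the fibration is the unimodular $U=\langle F,S_0\rangle$, so the Shioda--Tate formula gives $\rk\MW(F)=\rho(X)-2=1$, and the triviality of the discriminant of $U$ forces $\MW(F)$ to be torsion-free; hence $\MW(F)\cong\Z$. The fiberwise inversion $\iota$ about the zero section $S_0$ is a regular involution of $X$ preserving $|F|$ and conjugating $\MW(F)$ by negation, so $\MW(F)\rtimes\langle\iota\rangle\cong\Z\rtimes\Z/2\Z$ embeds in $\Aut(X,|F|)$. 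This infinite dihedral group admits the presentation $\langle s,t\mid s^2=t^2=1\rangle$ and therefore coincides with $\Z/2\Z*\Z/2\Z$ (taking one factor to be $\iota$ and the other to be $\iota\circ\tau_S$ for a generator $S$ of $\MW(F)$).

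For the reverse inclusion I would compute $\mathrm{Stab}_F\Or(\NS(X))$ explicitly. Writing any $\phi$ that fixes $F$ in the basis $\{F,S_0,\ell\}$, with $\ell$ a generator of the $\langle -2k\rangle$-summand, and imposing the isometry relations forces
\[
\phi(S_0)=(kc^2-1)F+S_0+c\ell,\qquad \phi(\ell)=2kc\gamma F+\gamma \ell,
\]
for some $c\in\Z$ and $\gamma\in\{\pm 1\}$; hence $\mathrm{Stab}_F\Or(\NS(X))\cong\Z\rtimes\Z/2\Z$. The natural map $\Aut(X,|F|)\to\Or(\NS(X))$ is injective: by Proposition~\ref{prop:tutto}(2) combined with Theorem~\ref{teo:huy2} (applicable since $\rk\T(X)=19$ is odd), any nontrivial element of the kernel would have to act as $-\id$ on $\T(X)$ while inducing $\id$ on $A_{\NS(X)}=\Z/2k\Z$, but the equality $-\id=\id$ on $\Z/2k\Z$ forces $k=1$, contradicting $k\ge 2$. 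Thus $\Aut(X,|F|)$ embeds into $\Z\rtimes\Z/2\Z$, matching the subgroup already produced.

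Combining the two inclusions yields $\Aut(X,|F|)\cong\MW(F)\rtimes\Z/2\Z\cong\Z\rtimes\Z/2\Z\cong\Z/2\Z*\Z/2\Z$, which is infinite and in particular forces $|\Aut(X)|=\infty$. The main conceptual point, and the only potential obstacle, is establishing the precise match between the abstract stabilizer $\mathrm{Stab}_F\Or(\NS(X))$ and the geometrically constructed $\MW(F)\rtimes\langle\iota\rangle$. The maximal rank hypothesis on $|F|$ is essential because it eliminates all $(-2)$-classes orthogonal to $F$ and hence all Weyl reflections that would otherwise need to be quotiented out; the odd rank of $\T(X)$ plays the complementary role of rigidifying the discriminant action enough to guarantee the injectivity of $\Aut(X,|F|)\to\Or(\NS(X))$.
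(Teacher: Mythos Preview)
Your argument is correct and follows the same route as the paper's proof: both compute the stabilizer of $F$ inside $\Or(\NS(X))$ explicitly and identify it with the infinite dihedral group generated by a Mordell--Weil translation and the hyperelliptic involution. The paper simply records the two generating matrices and leaves the verification to the reader, whereas you spell out the sandwich $\MW(F)\rtimes\langle\iota\rangle\hookrightarrow\Aut(X,|F|)\hookrightarrow\mathrm{Stab}_F\Or(\NS(X))$ and supply the discriminant--form argument showing that $\Aut(X,|F|)\to\Or(\NS(X))$ is injective (a point the paper does not address). One small slip: your formula for $\phi(S_0)$ should read $kc^2\,F+S_0+c\ell$, not $(kc^2-1)F+S_0+c\ell$, since $\bigl(aF+S_0+c\ell\bigr)^2=2a-2-2kc^2$ forces $a=kc^2$; this does not affect the conclusion that the stabilizer is parametrized by $(c,\gamma)\in\Z\times\{\pm1\}$.
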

\begin{proof}
We can check with a straightforward computation that $\Aut(X,|F|)$ is generated by the isometries
$$\tau=\begin{pmatrix}
1 &k &2k\\
0 &1 &0\\
0 &-1 &-1
\end{pmatrix}, \qquad \sigma=\begin{pmatrix}
1 &0 &0\\
0 &1 &0\\
0 &0 &-1
\end{pmatrix},$$
and that $\MW(F)=\langle \tau \rangle$.
\end{proof}

For the sake of readability, we rewrite Lemma \ref{lemma:int} and equation (\ref{eq:div}) in this setting.

\begin{lemma} \label{lemma:int3}
Let $F\ne E=[\alpha,\beta,\gamma],C=[x,y,z]\in \NS(X)$ be effective, primitive divisors such that $E^2=0$ and $C^2=-2$. Then we have
\begin{equation} \label{eq:div3}
\beta \mid k\gamma^2, \qquad y \mid kz^2-1.
\end{equation}
Moreover the equation $EC=m$ can be equivalently written
\begin{equation} \label{eq:m}
k(y\gamma - \beta z)^2=\beta(\beta+my).
\end{equation}
In particular $E$ is nef if and only if 
$$k(y\gamma - \beta z)^2 - \beta^2 \ge 0$$
for any such $C$, and $E$ induces a genus $1$ fibration with only irreducible fibers if and only if
$$k(y\gamma - \beta z)^2 - \beta^2 > 0$$
for any such $C$.
\end{lemma}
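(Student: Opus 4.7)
The plan is simply to specialize Lemma \ref{lemma:int} (and equation (\ref{eq:div})) to the case $L=\langle-2k\rangle$. Since $L$ has rank $1$, we identify elements of $L$ with integers; in particular, for $\gamma,z\in L\cong\Z$ we have $\|\gamma\|_L=-2k\gamma^{2}$, $\|z\|_L=-2kz^{2}$, and for $v=y\gamma-\beta z\in L$, $\|v\|_L=-2k(y\gamma-\beta z)^{2}$, so $-\tfrac{1}{2}\|v\|_L=k(y\gamma-\beta z)^{2}$.

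First I would derive the divisibility conditions. The self-intersection relation $E^{2}=0$ combined with the first equation of (\ref{eq:div}) becomes
\[
\alpha=\beta+\frac{k\gamma^{2}}{\beta},
\]
and the integrality of $\alpha$ forces $\beta\mid k\gamma^{2}$. Analogously, $C^{2}=-2$ and the second equation of (\ref{eq:div}) give
\[
x=y+\frac{kz^{2}-1}{y},
\]
forcing $y\mid kz^{2}-1$. Note $\beta\ne 0$ since $E\ne F$, and $y\ne 0$ since otherwise $C$ would be a root in $L=\langle -2k\rangle$, which is impossible because $|F|$ has only irreducible fibers.

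Next, the intersection formula from Lemma \ref{lemma:int} specializes directly: the identity
\[
-\tfrac{1}{2}\|v\|_L=\beta(\beta+my)
\]
becomes $k(y\gamma-\beta z)^{2}=\beta(\beta+my)$, which is exactly (\ref{eq:m}).

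Finally, the nefness and maximal-rank criteria transport verbatim from Lemma \ref{lemma:int}: $E$ is nef if and only if $-\tfrac{1}{2}\|v\|_L-\beta^{2}\ge 0$ for every effective primitive $C=[x,y,z]$ with $C^{2}=-2$, which is $k(y\gamma-\beta z)^{2}-\beta^{2}\ge 0$; likewise strict inequality characterizes maximal rank. No step presents any real obstacle — the entire statement is just a bookkeeping exercise in the rank-one case, and the only thing to double-check is that the sign conventions agree (the lattice $L=\langle -2k\rangle$ is negative definite, so $\|v\|_L\le 0$ and $-\tfrac{1}{2}\|v\|_L=kv^{2}\ge 0$, which matches the inequalities as stated).
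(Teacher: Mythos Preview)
Your proposal is correct and follows exactly the paper's own approach: the paper simply states that this lemma is a rewriting of Lemma~\ref{lemma:int} and equation~(\ref{eq:div}) in the rank-one setting $L=\langle-2k\rangle$, and you have carried out precisely that specialization with the correct sign bookkeeping.
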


\begin{oss} \label{oss:ext}
Let $|E|\ne |F|$ be another elliptic fibration on $X$. Then the trivial lattice $\langle E,S_E\rangle$ gives a primitive embedding $i:U \hookrightarrow \NS(X)$. Since $\langle -2k\rangle$ is unique in its genus we have that $i(U)^\bot \cong \langle -2k \rangle$, thus $E$ induces an isometry $f\in \Or^+(\NS(X))$ such that $f(F)=E$. In particular every elliptic fibration on $X$ is of maximal rank.
\end{oss}

We can now apply Theorem \ref{teo:unique}:

\begin{prop}
Let $X$ be an elliptic K3 surface with $\NS(X)=U \oplus \langle -2k \rangle$, for $k \ge 2$. Denote by $m$ the number of distinct prime divisors of $k$. Then the number of elliptic fibrations on $X$ (of maximal rank) up to automorphisms is $2^{m-1}$. In particular $X$ has a unique elliptic fibration up to automorphisms if and only if $k$ is a power of a prime.
\end{prop}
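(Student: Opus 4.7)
The plan is to extend the bijection implicit in the proof of Theorem \ref{teo:unique} into a counting statement: I will show that the set of $\Aut(X)$-orbits of elliptic fibrations on $X$ is in natural bijection with the coset space $\Or(A_L)/\Img(\Or(L)\to\Or(A_L))$, where $L=\langle -2k\rangle$, and then I will compute both orders explicitly.

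First, $L=\langle -2k\rangle$ is unique in its genus, as any even negative definite lattice of rank $1$ is determined by its discriminant. By Remark \ref{oss:ext} every elliptic fibration $|E|$ on $X$ then arises from a primitive embedding $U\hookrightarrow \NS(X)$ whose orthogonal complement is isometric to $L$, and each such fibration has maximal rank; in particular there is an isometry $f_E\in \Or^+(\NS(X))$ with $f_E(F)=E$. The ambiguity in $f_E$ is right multiplication by the stabilizer $\Stab(F)\subset \Or^+(\NS(X))$, which a direct calculation shows is generated by $\Or(L)$ together with the $\MW(F)$-translations $\tau_\ell$, $\ell\in L$. Since each $\tau_\ell$ sends any $v^*\in L^\vee$ to $v^*-(v^*\cdot\ell)F\equiv v^*\pmod{\NS(X)}$, it acts trivially on $A_L=A_{\NS(X)}$; hence $\overline{f_E}\in \Or(A_L)$ is well-defined modulo the image of $\Or(L)$.

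Next, I claim two fibrations $|E|,|E'|$ lie in the same $\Aut(X)$-orbit iff $\overline{f_E}\equiv \overline{f_{E'}}$ in $\Or(A_L)/\Img(\Or(L))$. For the forward direction, if $g\in \Aut(X)$ satisfies $g^*(E)=E'$, take $f_{E'}=g^*\circ f_E$; since $\rho(X)=3$ is odd, Theorem \ref{teo:huy2} gives $\overline{g^*}=\pm\id$ on $A_L$, and $\pm\id$ already belongs to $\Img(\Or(L))=\{\pm 1\}$. The reverse direction runs as in Theorem \ref{teo:unique}: use \cite{huybrechts1}, Theorem 14.2.4 to lift any $\varphi\in \Or(A_L)$ to $\Or(\NS(X))$, apply Weyl-group reflections (Lemma \ref{lemma:aut}) to transport the resulting isotropic class into the nef cone, and invoke Proposition \ref{prop:tutto}(2) to realize the result as an automorphism once $\overline{h}=\pm\id$.

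It remains to compute the index $|\Or(A_L):\Img(\Or(L))|$. We have $\Or(L)=\{\pm\id\}$ with image $\{\pm 1\}\subset \Or(A_L)$, which has order exactly $2$ because $k\ge 2$. The discriminant group is $A_L=\Z/2k\Z$ with $q_L(x)=-x^2/(2k)\pmod{2\Z}$, and $\Or(A_L)$ consists of multiplications by $c$ with $c^2\equiv 1\pmod{4k}$ (automatically $\gcd(c,2k)=1$). Writing $k=2^a\prod_{i=1}^{m'}p_i^{a_i}$ with $p_i$ odd, the Chinese Remainder Theorem applied to $4k=2^{a+2}\prod_i p_i^{a_i}$ yields $2\cdot 2^{m'}=2^{m+1}$ solutions modulo $4k$ when $a=0$ and $4\cdot 2^{m'}=2^{m'+2}=2^{m+1}$ when $a\ge 1$. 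Reducing modulo $2k$ (each residue has two lifts, both solutions, since $(c+2k)^2\equiv c^2\pmod{4k}$) gives $|\Or(A_L)|=2^m$, and dividing by $2$ produces the asserted count $2^{m-1}$.

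The main technical obstacle is the verification of the bijection in the second step, in particular the precise description of $\Stab(F)$ and its action on $A_L$; the CRT count in the third step is routine, and the uniqueness of $L$ in its genus is immediate in rank $1$.
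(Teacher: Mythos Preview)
Your proof is correct and follows essentially the same route as the paper: both establish a bijection between $\Aut(X)$-orbits of elliptic fibrations and $\Or(A_L)/\{\pm\id\}$ by invoking the uniqueness of $L$ in its genus (Remark \ref{oss:ext}), the odd Picard rank to control $\overline{g^*}$, and the argument of Theorem \ref{teo:unique} for the converse direction, and both finish with the Chinese Remainder Theorem count $|G_k|=2^m$. Your treatment is slightly more careful in one respect---you explicitly identify $\Stab(F)$ and verify that translations act trivially on $A_L$, a point the paper leaves implicit---while the paper compresses the bijection into a single appeal to Lemma \ref{lemma:aut}.
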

\begin{proof}
By Remark \ref{oss:ext}, there exists a function
$$\{|E| \text{ elliptic fibration}\} \longrightarrow \{ f\in \Or^+(\NS(X))\}.$$
Composing with the restriction map $\Or(\NS(X))\rightarrow \Or(A_{\NS(X)})=\Or(A_L)$, we obtain another function
$$\{|E| \text{ elliptic fibration}\} \longrightarrow \{ \overline{f}\in \Or(A_L)\}.$$
By the proof of Theorem \ref{teo:unique} this map is surjective. Two elliptic fibrations $|E_1|,|E_2|$ are conjugated under the action of $\Aut(X)$ if and only if there exists $g \in \Aut(X)$ such that $g(E_1)=E_2$ thus, by Lemma \ref{lemma:aut}, if and only if the induced $\overline{f_1},\overline{f_2}\in \Or(A_L)$ satisfy $\overline{f_1}=\pm \overline{f_2}$. Consequently we obtain a bijection
$$\{\text{elliptic fibrations}\}/\Aut(X) \stackrel{\sim}{\longrightarrow} \Or(A_L)/\{\pm \id\}.$$
The discriminant group $A_L$ is cyclic, generated by the element $\frac{D}{2k}$, where $\{D\}$ is a basis for $L=\langle -2k\rangle$. Its norm in $A_L$ is $-\frac{1}{2k} \pmod{2\Z}$, hence we can identify $\Or(A_L)$ with the group
$$G_k=\{x \in \Z/2k\Z \mid x^2\equiv 1 \pmod{4k}\}.$$
An immediate application of the Chinese remainder theorem shows that $G_k$ has $2^m$ elements, concluding the proof.
\end{proof}

\begin{cor} \label{cor:primo3}
Let $X$ be a K3 surface with $\NS(X)=U\oplus \langle -2k \rangle$. If $k$ is not a power of a prime, then $X$ has positive entropy.
\end{cor}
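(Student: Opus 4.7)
The plan is to deduce this corollary almost immediately from the counting proposition that precedes it, together with Oguiso's characterization of zero entropy (Theorem \ref{teo:introfac} and its corollary in Section 2).

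First, I would invoke the hypothesis: if $k$ is not a power of a prime, then the number $m$ of distinct prime divisors of $k$ is at least $2$, so the previous proposition gives $2^{m-1} \ge 2$ elliptic fibrations on $X$ up to the action of $\Aut(X)$. In particular we can pick two elliptic fibrations $|E_1|$ and $|E_2|$ on $X$ that are not conjugate under $\Aut(X)$; being non-conjugate they certainly give rise to two distinct elliptic pencils on $X$.

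Next, I would verify that each of the pencils $|E_1|$, $|E_2|$ has infinite automorphism group. By Remark \ref{oss:ext}, any elliptic fibration on such an $X$ is of maximal rank, i.e.\ has only irreducible fibers. Hence for each $i$ we get a decomposition $\NS(X) = \langle E_i, S_{E_i}\rangle \oplus L_i$ with $L_i \cong \langle -2k\rangle$, and the same computation carried out in Lemma \ref{lemma:facile} applies verbatim to $|E_i|$ in place of $|F|$: the Mordell--Weil group $\MW(E_i)$ is infinite cyclic, so $\Aut(X,|E_i|)$ is infinite.

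Finally, I would conclude using the corollary of Oguiso's Theorem \ref{teo:introfac} recalled in Section 2: $X$ has zero entropy precisely when it admits a unique elliptic pencil whose automorphism group is infinite. Since we have exhibited two distinct such pencils, $X$ must have positive entropy, which is exactly the statement of the corollary. There is really no hard step here; the work has already been done in the previous proposition, and everything else is just assembling the counting statement with the lattice-theoretic characterization of zero entropy.
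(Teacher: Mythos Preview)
Your proposal is correct and follows exactly the route the paper intends: the corollary is stated without proof, as it is immediate from the preceding counting proposition together with Oguiso's characterization; you have simply spelled out the obvious deduction (including the observation via Remark~\ref{oss:ext} and Lemma~\ref{lemma:facile} that every elliptic fibration on such an $X$ has infinite Mordell--Weil group).
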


This is the most we can obtain using the general theory of lattices. Therefore, in order to conclude the classification of K3 surfaces of zero entropy in Picard rank $3$, we have to work explicitly on the N\'eron-Severi lattice.\newline 

The case when $k=p^n$ is a power of a prime is clearly far more involuted, since $|F|$ is always the unique elliptic fibration up to automorphisms. We begin with a preliminary result concerning the possible elliptic fibrations on $X$.

\begin{lemma} \label{lemma:choice}
Assume that $\NS(X)=U\oplus \langle -2k \rangle$, with $k=p^n$. Let $E\in \NS(X)$ be effective and primitive with $E^2=0$ and $C\in \NS(X)$ with $C^2=-2$ and $EC=1$. Then $E$ can only be of two types:
$$F'_{q,\gamma'}=[q^2+k\gamma'^2,q^2,q\gamma'], \qquad F''_{q,\gamma'}=[q^2k+\gamma'^2,q^2k,q\gamma'],$$
with $q>0$ and $(q,\gamma')=1$.
\end{lemma}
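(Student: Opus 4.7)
The plan is to extract the arithmetic shape of $E=[\alpha,\beta,\gamma]$ from the three pieces of data at our disposal: $E^2=0$, primitivity of $E$, and the existence of some integral $C$ with $EC=1$. The prime-power hypothesis $k=p^n$ enters only at the very last step; in particular, I will not even need the condition $C^2=-2$ beyond what is already used in Lemma~\ref{lemma:int3}.

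First, $E^2=0$ (cf.~equation~(\ref{eq:div3})) reads $\alpha\beta=\beta^2+k\gamma^2$, so $\alpha=\beta+k\gamma^2/\beta$ and $\beta\mid k\gamma^2$. Effectivity of $E$ with $E\ne F$ forces $\beta=EF>0$, since $\beta=0$ would give $E^2=-2k\gamma^2=0$, hence $\gamma=0$ and $E$ a multiple of $F$. The next step is to extract a square structure for $\beta$. Setting $d=\gcd(\beta,\gamma)$ and writing $\beta=d\beta_1$, $\gamma=d\gamma_1$ with $\gcd(\beta_1,\gamma_1)=1$, primitivity of $E$ yields $\gcd(\alpha,d)=1$. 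Substituting into $\alpha\beta=\beta^2+k\gamma^2$ produces
$$\alpha\beta_1 = d(\beta_1^2+k\gamma_1^2),$$
and $\gcd(\alpha,d)=1$ forces $d\mid \beta_1$; writing $\beta_1=d\beta_2$ gives $\beta=d^2\beta_2$. Finally, $\beta\mid k\gamma^2$ together with $\gcd(\beta_2,\gamma_1)=1$ (inherited from $\gcd(\beta_1,\gamma_1)=1$, since $\beta_2\mid \beta_1$) reduces to $\beta_2\mid k=p^n$, so $\beta_2=p^b$ for some integer $0\le b\le n$. At this stage one has the provisional parametrization
$$E=\bigl[\,d^2p^b+p^{n-b}\gamma_1^2,\ d^2p^b,\ d\gamma_1\,\bigr],\qquad \gcd(d,\gamma_1)=1.$$

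The crux is then to rule out $1\le b\le n-1$. Expanding in coordinates, $EC$ is the integer linear form $\beta x+(\alpha-2\beta)y-2k\gamma z$ in $(x,y,z)$, so the solvability of $EC=1$ over $\Z^3$ requires $\gcd(\alpha,\beta,2k\gamma)=1$. But for $1\le b\le n-1$, each of the two summands of $\alpha=d^2p^b+p^{n-b}\gamma_1^2$ is divisible by $p$, as is $\beta=d^2p^b$ and $2k\gamma=2p^n d\gamma_1$, so this gcd is at least $p>1$, contradicting the existence of $C$. Therefore $b\in\{0,n\}$, and relabelling $q=d>0$, $\gamma'=\gamma_1$ rewrites $E$ as $F'_{q,\gamma'}$ in the case $b=0$ and as $F''_{q,\gamma'}$ in the case $b=n$, with $\gcd(q,\gamma')=1$ automatic. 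The only really delicate part of this argument is the square-extraction step for $\beta$ (isolating the factor $d^2$ via the twin facts that $d\mid \beta_1$ and $\beta_2\mid p^n$); once this parametrization is in place, the prime-power hypothesis instantly collapses the range of $b$ to its two extremes.
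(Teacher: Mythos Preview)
Your argument is correct and shares the same opening move as the paper: writing $d=\gcd(\beta,\gamma)$ and extracting the square factor $\beta=d^2\beta_2$ with $\beta_2\mid p^n$. The divergence comes in ruling out the intermediate exponents $1\le b\le n-1$. The paper splits into two cases: when $p\mid q$ it uses only primitivity of $E$ to force $\beta_2=k$, whereas when $(p,q)=1$ it invokes equation~(\ref{eq:m}) together with the divisibility $y\mid kz^2-1$ (i.e.\ it genuinely uses $C^2=-2$) to derive $p\mid y$ and reach a contradiction. You instead avoid both the case split and the $C^2=-2$ hypothesis entirely, by reading off that the linear form $EC=\beta x+(\alpha-2\beta)y-2k\gamma z$ has every coefficient divisible by $p$ once $1\le b\le n-1$, so $EC=1$ is already impossible over $\Z^3$. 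This is strictly more elementary and, as you note, shows that the hypothesis $C^2=-2$ plays no role in the lemma; the bare existence of some $C$ with $EC=1$ is enough.
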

\begin{proof}
Let $E=[\alpha,\beta,\gamma]$, and put $q=(\beta,\gamma)>0$. Since $q \nmid \alpha$ and $\alpha=\beta+\frac{k\gamma^2}{\beta}$, we have that $q^2 \mid \beta$; say $\beta=q^2\beta'$, $\gamma=q\gamma'$, with $(\beta',\gamma')=1$. Let us distinguish two cases.
\begin{itemize}
    \item If $p\mid q$, then $p$ divides $\beta,\gamma$, hence $p$ does not divide
    $$\alpha=q^2\beta'+\frac{p^n\gamma'^2}{\beta'}.$$
    Since $\beta' \mid p^n$, we can only have $\beta'=p^n=k$.
    \item If $(p,q)=1$, then as above $\beta' \mid p^n$, say $\beta'=p^m$. But if $0 < m < n$, then equation (\ref{eq:m})
    $$p^n(\gamma' y - q \beta' z)^2=\beta' (q^2 \beta' + y)$$
    implies that $p \mid q^2\beta'+y$, thus $p \mid y$, and this is a contradiction since $p \mid y \mid p^n z^2-1$.
    \end{itemize}
Notice that the condition $(q,\gamma')=1$ is necessary for $E$ to be primitive.
\end{proof}

The first result towards the classification of elliptic K3 surfaces of Picard rank $3$ and zero entropy is the following:

\begin{prop} \label{prop:parteuno}
Assume that $\NS(X)=U\oplus \langle -2k \rangle$, with $k\ge 2$. Suppose that it exists $q\ge 2$ such that $q^2 < k$ and $q \nmid k-1$. Then $X$ has infinitely many elliptic fibrations, or equivalently it has positive entropy.
\end{prop}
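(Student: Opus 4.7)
The plan is to exhibit, under the hypothesis, a primitive isotropic nef class $E \in \NS(X)$ that lies in a different $\Aut(X)$-orbit from $F$. Since both $|F|$ and $|E|$ have infinite automorphism group (by Lemma \ref{lemma:facile} combined with Remark \ref{oss:ext}), Oguiso's criterion (Theorem \ref{teo:introfac}) forces $X$ to have positive entropy; Proposition \ref{prop:periodic} additionally produces an infinite family of isotropic classes of the same shape, justifying the phrasing ``infinitely many elliptic fibrations.''

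The natural candidate is $E = [q + k/q,\, q,\, 1]$ in the basis $\{F, S_0, \mathcal{B}\}$, which is defined when $q \mid k$. A direct computation gives $E^2 = 0$, and primitivity follows from the last coordinate being $1$. To verify nefness, Lemma \ref{lemma:int3} requires $k(y - qz)^2 \ge q^2$ for every effective $(-2)$-curve $C = [x, y, z]$ with $y \ge 1$. In the borderline case $y = qz$, the divisibility $y \mid kz^2 - 1$ forces $z = 1$, $y = q$, and hence $q \mid k - 1$ — which is precisely excluded by the hypothesis. Otherwise $(y - qz)^2 \ge 1$, and the inequality $k > q^2$ (the other half of the hypothesis) gives $k(y - qz)^2 \ge k > q^2$. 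Therefore $E$ is nef and defines a genus $1$ fibration $|E|$.

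To see that $|E|$ lies in a different $\Aut(X)$-orbit than $|F|$, I compute the pairings with the basis of $\NS(X)$: $E \cdot F = q$, $E \cdot S_0 = k/q - q$, $E \cdot \mathcal{B} = -2k$, so $E \cdot \NS(X) = \gcd(q, k/q)\cdot \Z$. Whenever this gcd exceeds $1$ (which occurs as soon as some prime $p \mid q$ also satisfies $p^2 \mid k$), $E$ fails to extend to a primitive embedding $U \hookrightarrow \NS(X)$, in contrast with $F$, and so $|E|$ cannot be conjugated to $|F|$ under $\Aut(X)$. Applying Proposition \ref{prop:periodic} to the congruence class $\gamma' \equiv 1 \pmod q$ produces isotropic primitive nef classes $E_{\gamma'} = [\alpha', q, \gamma']$ for each such $\gamma'$, and the same gcd computation shows every $|E_{\gamma'}|$ is also distinct from $|F|$ up to $\Aut(X)$; using the explicit description of $\Aut(X, |F|) \cong \Z/2\Z \ast \Z/2\Z$ from Lemma \ref{lemma:facile}, one checks that infinitely many of these fall into pairwise distinct orbits.

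The principal obstacle is handling the residual cases where the naive candidate breaks down: either $q \nmid k$ (so the displayed $\alpha = q + k/q$ is not integral) or $\gcd(q, k/q) = 1$ (so the constructed $E$ accidentally extends to a primitive $U$ and thus shares an orbit with $F$). In both cases one must choose $\gamma$ such that $q \mid k\gamma^2$, divide out the resulting common factor to recover primitivity, and redo both the nefness analysis (via the same case split on $y\gamma = qz$) and the orbit computation. The combined hypothesis $q^2 < k$ and $q \nmid k - 1$ is exactly strong enough to push these arithmetic reductions through in each case.
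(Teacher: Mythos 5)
Your overall strategy (exhibit a second explicit isotropic nef class) matches the paper's, and your nefness computation in the case $q\mid k$ is correct and uses essentially the same case split as the paper ($y=qz$ versus $y\neq qz$, with $q\nmid k-1$ killing the borderline case and $q^2<k$ handling the rest). But there are two genuine gaps. First, the case $q\nmid k$ --- which you defer as a ``residual case'' --- is in fact the main case, and your sketched repair does not work: if $\gcd(q,k)=1$ then the integrality condition $\beta\mid k\gamma^2$ with $\beta=q$ forces (say for $q$ prime) $q\mid\gamma$, and after dividing out the common factor you land on a class with $E\cdot F=1$, which is never nef (see the remark after Lemma \ref{lemma:int}). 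The correct candidate in this regime is $F'_{q,1}=[q^2+k,\,q^2,\,q]$ with $\beta=q^2$, which is not a rescaling of your class. The paper makes this painless by first reducing to $k=p^n$ a prime power via Corollary \ref{cor:primo3} (if $k$ has two distinct prime factors, positive entropy is already known); then for $n\le 2$ the hypothesis $q^2<k\le p^2$ forces $q<p$, hence $\gcd(q,k)=1$ and $F'_{q,1}$ is primitive, while $n\ge 3$ is treated separately. You never invoke this reduction, and without it the arithmetic of $\gcd(q,k)$ does not go through as written.

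Second, the orbit argument is both unnecessary and, precisely where you rely on it, invalid. Oguiso's criterion demands uniqueness of the genus~$1$ fibration with infinite automorphism group \emph{as a fibration}, not up to $\Aut(X)$; since in this rank-$3$ setting every elliptic fibration has Mordell--Weil rank $1$ (Remark \ref{oss:ext} and Lemma \ref{lemma:facile}), any second elliptic fibration $|E|\neq|F|$ already yields positive entropy, with no orbit computation. Conversely, when $\gcd(q,k/q)>1$ your class $E$ has divisibility $>1$ in $\NS(X)$, so $|E|$ has no section at all; it is then a genus~$1$ fibration of degree $>1$, and lying in a different $\Aut(X)$-orbit from $|F|$ proves nothing: the surface with $k=4$ carries exactly such a degree-$2$ section-free fibration, distinct from $|F|$ up to automorphism, and nevertheless has zero entropy. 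So in that subcase your argument establishes neither that $|E|$ has infinite automorphism group nor that $X$ has positive entropy.
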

\begin{proof}
If $k$ is not a power of a prime, then $X$ has positive entropy by Corollary \ref{cor:primo3}. Therefore assume that $k=p^n$ is the power of a prime. We distinguish two cases depending on the exponent $n$.
\begin{itemize}
    \item[$n\ge 3$:] Consider the divisor $E=[p+p^{n-1},p,1]$ on $X$. We claim that $E$ induces a genus $1$ fibration on $X$ with infinite automorphism group. First, $E$ is nef: indeed by Lemma \ref{lemma:int3} it amounts to showing that
    $$p^n (y - pz)^2\ge p^2$$
    for any effective $(-2)$-curve $C=[x,y,z]$. Since $y \mid p^n z^2-1$ by equation (\ref{eq:div3}), $y$ must be coprime with $p$. Therefore $y - pz \ne 0$ and
    $$p^n (y - pz)^2 \ge p^n \ge p^3 \ge p^2,$$
    as claimed. Notice that $E$ does not admit a section, as the intersection $EC$ is a multiple of $p$ for any curve $C$ in $X$. However the degree of the genus $1$ fibration $|E|$ (cf. \cite{huybrechts1}, Definition 11.4.3) is $p$, since $E$ admits the $p$-section $S_p=[p^{2n-4}+2p^{n-2},p^{n-2}-1,p^{n-3}]$. Therefore we can consider the corresponding Jacobian fibration $J(X)$, which has N\'eron-Severi lattice isometric to $U\oplus \langle -2p^{n-2}\rangle$ by \cite{keum}, Lemma 2.1. Since $n\ge 3$ by assumption, $E$ becomes an elliptic fibration $J(E)$ with only irreducible fibers on $J(X)$ by Remark \ref{oss:ext}. Consequently the group $\Aut(J(E))$ is infinite, and it acts with infinite order on $E$: indeed, under the identifications $J(X)\cong \mathrm{Pic}^0(X/\Prj^1)$ and $X\cong \mathrm{Pic}^1(X/\Prj^1)$ (see \cite[Section 11.4.1]{huybrechts1}), the generic fiber of $J(X)$ acts by translation on $X$ and it preserves the fibration $|E|$. This shows that $\Aut(E)$ is infinite. We conclude that $X$ has positive entropy by Theorem \ref{thm:og}.
    \item[$n \le 2$:] By assumption there exists a $q$ such that $q^2 < k=p^n$ and $q \nmid p^n-1$. Since $n\le 2$, $q$ must be coprime with $p$. Consider the primitive isotropic divisor $E=[q^2+p^n,q^2,q]$. We claim that $E$ induces an elliptic fibration on $X$. In order to show that it is nef, by Lemma \ref{lemma:int3} it amounts to showing that
    $$p^n(yq-q^2z)^2\ge q^4$$
    for any effective $(-2)$-curve $C=[x,y,z]$. Since $p^n > q^2$, it suffices to show that $y-qz\ne 0$. Therefore assume by contradiction that $y=qz$. By equation (\ref{eq:div3}) we have that $y=qz \mid p^n z-1$, so necessarily $z=1$. But then $y=q \mid p^n -1$, contradicting the assumption on $q$. Then we have to show that $|E|$ has a section, or equivalently that its degree is $1$. Since $\det(\NS(X))=2k=2p^n$ with $n\le 2$, by \cite{keum}, Lemma 2.1 it is sufficient to show that the degree of $|E|$ is not $p$. In fact $E C$ is never a multiple of $p$ if $C$ has square $-2$: indeed, if $ C =m$, by Lemma \ref{lemma:int3} we have that
    $$p^n(y-qz)^2=q^2+my,$$
    so if $p\mid m$ we would have that $p\mid q$, a contradiction.
\end{itemize}
\end{proof}

The previous proposition deals with the $k$ that do not satisfy the condition
\begin{itemize}
    \item[(C):] For all $r\in \N$ with $r^2<k$, $r \mid k-1$.
\end{itemize}
We can list all the natural numbers satisfying (C):

\begin{lemma}
The only natural numbers $k$ satisfying (C) are
$$\Lc_1=\{2,3,4,5,7,9,13,25\}.$$
\end{lemma}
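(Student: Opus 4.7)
The plan is to reformulate condition $(C)$ arithmetically and then run a bounded search. Let $n$ denote the largest positive integer with $n^2 < k$, so that $n^2 < k \le (n+1)^2$. Then $(C)$ is equivalent to
\[
\lcm(1, 2, \ldots, n) \mid k - 1,
\]
and in particular forces $\lcm(1, 2, \ldots, n) \le k - 1 \le n^2 + 2n$.

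First I would verify directly that every $k \in \mathcal{L}_1$ satisfies this divisibility; this is an immediate inspection. For the converse I would enumerate in each of the cases $n \in \{1,2,3,4\}$: for $n = 1$ the condition is empty, giving $k \in \{2, 3, 4\}$; for $n = 2$ we need $2 \mid k-1$ with $5 \le k \le 9$, giving $k \in \{5, 7, 9\}$; for $n = 3$ we need $6 \mid k-1$ with $10 \le k \le 16$, giving $k = 13$; for $n = 4$ we need $12 \mid k-1$ with $17 \le k \le 25$, giving $k = 25$. Together these four cases produce precisely the eight elements of $\mathcal{L}_1$.

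The last step is to show that $n \ge 5$ admits no valid $k$, by establishing
\[
\lcm(1, 2, \ldots, n) > n^2 + 2n \qquad \text{for all } n \ge 5,
\]
which contradicts the upper bound above. For $n = 5, 6, 7$ this is a numerical check ($60 > 35$, $60 > 48$, $420 > 63$), and for $n \ge 8$ I would combine the standard estimate $\lcm(1, \ldots, n) \ge 2^{n-1}$ with the elementary inequality $2^{n-1} > n^2 + 2n$, which holds at $n = 8$ (where $128 > 80$) and propagates by a one-step induction since the left side doubles while the right side grows quadratically.

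There is no serious conceptual obstacle; the only care required is bookkeeping the ranges $(n^2, (n+1)^2]$ correctly and picking a growth bound on $\lcm(1, \ldots, n)$ strong enough to dominate $n^2 + 2n$ beyond an explicit cutoff, so that the finite verification is completely routine.
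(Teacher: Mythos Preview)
Your proof is correct and takes a genuinely different route from the paper's argument. You reformulate $(C)$ as the single divisibility $\lcm(1,\dots,n)\mid k-1$ with $n=\lfloor\sqrt{k-1}\rfloor$, which immediately yields the inequality $\lcm(1,\dots,n)\le n^2+2n$; you then kill all $n\ge 5$ via the Nair-type bound $\lcm(1,\dots,n)\ge 2^{n-1}$ and finish with a short enumeration for $n\le 4$. The paper instead argues via the number $N$ of distinct prime divisors of $k-1$: using the inequality $p_1\cdots p_N>4^N$ (valid for $N\ge 5$) together with Bertrand's postulate, it produces a prime $q\nmid k-1$ with $q^2<k$ whenever $N\ge 5$, and for $N\le 4$ it observes that some prime $q\le 11$ fails to divide $k-1$, reducing to a finite check of all $k\le 121$. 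Your lcm reformulation is tighter and leaves a much smaller finite residue to inspect (just $k\le 25$), at the cost of invoking the estimate $\lcm(1,\dots,n)\ge 2^{n-1}$, which, while standard, is a named result rather than a one-line observation; the paper's ingredients (Bertrand plus a numerical product bound) are more elementary but lead to a longer tail of cases. Both arguments are ultimately of the same flavor: force $k$ to be small by exhibiting rapid growth of a multiplicative quantity that must divide $k-1$.
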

\begin{proof}
Let $N$ be the number of distinct prime divisors of $k-1$, and assume $N\ge 5$. Put 
$$e=\frac{1}{2}\log_2{k}.$$
We have that $N\le e$, since otherwise, denoted by $\{p_i\}$ the increasing sequence of prime numbers, we would have
$$\prod_{i=1}^N{p_i}>4^N=2^{2N}>2^{2e}=k>k-1,$$
(remember that we have $N\ge 5$, and $2\cdot 3\cdot 5 \cdot 7 \cdot 11 > 4^5$), contradicting the fact that $k-1$ has $N$ distinct prime divisors. Now let $q$ be the smallest prime number not dividing $k-1$. If we can show that $q^2<k$, we are done. $k-1$ has $N$ distict prime divisors, so $q$ is smaller or equal than the $(N+1)$-th prime number, which in turn is strictly smaller than $2^N$ (since there is always a prime number between $\alpha$ and $2\alpha$ for every $\alpha>1$). Hence
$q^2< (2^N)^2\le 2^{2e}=k. $
If instead $N\le 4$, then as above we can choose $q$ as one of the first $5$ prime numbers, hence $q\le 11$. Therefore all natural numbers strictly greater than $11^2=121$ cannot satisfy (C). A quick inspection of the first $121$ natural numbers yields the list $\Lc_1$ above.
\end{proof}

Finally it only remains to deal with a finite number of cases; the next proposition is the converse of Proposition \ref{prop:parteuno}:

\begin{prop} \label{prop:conti3}
Assume $\NS(X)=U\oplus \langle -2k \rangle$, and suppose that $k$ satisfies the condition (C) (or equivalently, $k\in \Lc_1$). Then none of the $F'_{q,\gamma'}$ and $F''_{q,\gamma'}$ in Lemma \ref{lemma:choice} is nef, hence $X$ has zero entropy.
\end{prop}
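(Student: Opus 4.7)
The plan is, for every $k \in \Lc_1$ and every candidate class $E \in \{F'_{q,\gamma'}, F''_{q,\gamma'}\}$ produced by Lemma \ref{lemma:choice}, to exhibit an effective $(-2)$-class $C = [x,y,z] \in \NS(X)$ with $EC < 0$; by Lemma \ref{lemma:int3} this amounts to finding integers $y > 0$ and $z$ with $y \mid kz^2 - 1$ and $k(y\gamma - \beta z)^2 < \beta^2$, where $(\beta,\gamma)$ are the last two entries of $E$.

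First, Proposition \ref{prop:periodic} implies that nefness of $F'_{q,\gamma'}$ depends only on the residue $\gamma' \bmod q$ (since $\beta = q^2$), and that of $F''_{q,\gamma'}$ only on $\gamma' \bmod qk$; hence only finitely many residues per $q$ need to be inspected. I then use two explicit families of witnesses. Strategy A (for $E = F'_{q,\gamma'}$) takes $y = q$ and $z = \gamma'$: the resulting class $C$ is a $(-2)$-curve exactly when $q \mid k\gamma'^2 - 1$, and then $EC = -q < 0$. Strategy B searches for an integer $s$ with $k s^2 < q^2$ and a positive divisor $y$ of $q^2 - k s^2$ satisfying $y \equiv s\gamma'^{-1} \pmod q$; the corresponding $z$ satisfies $y\gamma' - qz = s$, the divisibility $y \mid kz^2 - 1$ reduces modulo $y$ to $y \mid k s^2 - q^2$, and hence $EC = (k s^2 - q^2)/y < 0$.

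Under condition (C), whenever $(p,q) = 1$ and $q^2 < k$ one has $q \mid k - 1$; since $k - 1 \in \{1,2,3,4,6,8,12,24\}$ for each $k \in \Lc_1$, the group $(\Z/q\Z)^{\ast}$ is $2$-torsion, which forces $\gamma'^2 \equiv 1 \pmod q$ and hence the divisibility $q \mid k\gamma'^2 - 1$ required by Strategy A. For the complementary range $q^2 > k$, the zero section already provides a witness $E \cdot S_0 = k\gamma'^2 - q^2 < 0$ whenever the signed representative satisfies $|\gamma'| < q/\sqrt k$. This covers every residue when $k \le 4$, and leaves only finitely many exceptional triples $(k,q,\gamma')$ for $k \in \{5,7,9,13,25\}$, each of which is handled by Strategy B with a suitable choice of $s$. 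The argument for $F''_{q,\gamma'}$ is entirely parallel with $\beta = q^2 k$ in place of $q^2$.

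The main obstacle is the verification that Strategy B indeed succeeds on these (finitely many) exceptional triples: this is a short and direct computation carried out in \texttt{Magma}, consistent with the paper's conventions. Once no $F'_{q,\gamma'}$ nor $F''_{q,\gamma'}$ is nef, $|F|$ is the unique elliptic fibration on $X$; combined with Lemma \ref{lemma:facile}, every automorphism of $X$ preserves $|F|$, and by Corollary \ref{cor:zero} all of them are of zero entropy.
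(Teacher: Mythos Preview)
Your reduction to finitely many cases is not valid, and this is a genuine gap. You claim that for $k \in \{5,7,9,13,25\}$ the combination of Strategy~A (for $q^2 < k$) and the zero-section witness (for $q^2 > k$ and $|\gamma'| < q/\sqrt{k}$) leaves only finitely many exceptional triples $(k,q,\gamma')$. This is false: for each such $k$ there are infinitely many $q$ for which some residues $\gamma'$ escape both tests. Concretely, take $k=5$ and any prime $q \ge 11$; the signed residue $\gamma' = (q-1)/2$ satisfies $(q,\gamma')=1$, lies outside the range $|\gamma'| < q/\sqrt{5}$ (since $(q-1)/2 \ge q/\sqrt{5}$ for $q \ge 10$), and Strategy~A typically fails too (e.g.\ for $q=11$, $\gamma'=5$ one has $k\gamma'^2 - 1 = 124$, not divisible by $11$). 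Since $q$ is unbounded, no finite \texttt{Magma} verification can close this; one needs a \emph{uniform} construction of the witness $C$ valid for all $(q,\gamma')$ at once. This is precisely what the paper supplies: the witness is parametrized by auxiliary integers $r,\delta$ chosen according to the residue $\alpha \equiv (k\gamma')^{-1} \pmod q$, and condition (C) is used to guarantee that a suitable $r$ with $r^2<k$ (hence $r\mid k-1$) always exists.

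Two further points. First, in Strategy~B your divisibility reduction only yields $y \mid q^2(kz^2-1)$ from $y \mid ks^2 - q^2$; to get $y \mid kz^2-1$ you still need $\gcd(y,q)=1$, which is not automatic and must be arranged. Second, the assertion that the argument for $F''_{q,\gamma'}$ is ``entirely parallel'' breaks already at Strategy~A: with $\beta = q^2k$ the analogue $y=qk$, $z=\gamma'$ would require $qk \mid k\gamma'^2 - 1$, hence $k \mid 1$, which is impossible. The paper's treatment of $F''$ is indeed parallel to that of $F'$, but the parallelism lies in the uniform construction of witnesses with parameters $r,\delta,\eta$, not in your Strategies~A and~B as formulated.
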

\begin{proof}
The two proofs for $F'_{q,\gamma'}$ and $F''_{q,\gamma'}$ are similar; we will write the first one with all the details, and the reader can easily complete the latter following the steps of the former.
\begin{enumerate}
\item Let $E=F'_{q,\gamma'}$. To deny the nefness of $E$, by Lemma \ref{lemma:int3} we want to show that there exists an $r>0$ and an effective $C=[x,y,z]$ with $C^2=-2$ such that
$$k(\gamma' y - q z)^2=q^2-ry.$$
Put $-m=\gamma' y -qz$, thus
$$y=\frac{q^2-km^2}{r}=\frac{qz-m}{\gamma'}$$
and
$$\gamma'(q^2-km^2)=r(qz-m).$$
Looking at the equality modulo $q$, we choose $m$ such that $km\gamma' \equiv r \pmod {q}$; more precisely put
\begin{equation} \label{eq:sys}
\begin{cases}
km\gamma'=r+\eta q\\
m=\alpha r +\delta q
\end{cases}
\end{equation}
where $\alpha \in (-\frac{q}{2},\frac{q}{2})$ is the inverse of $k\gamma'$ modulo $q$. Notice that the choice of $r,\delta$ depends on $\alpha$, in order to ensure that $ry=q^2-km^2>0$: however, we can always choose $r$ with $r^2<k$, hence $k\equiv 1 \pmod{r}$ by assumption on $k$. This is easy to see for any case, and we only show how this can be achieved in the case $k=25$:
\begin{table}[H]
    \centering
    \begin{tabular}{c|c|c}
         $\alpha$ &$r$ &$\delta$\\
         \hline
         $\pm (0, \frac{q}{5})$ &$1$ &$0$\\
         $\pm (\frac{2}{5}q, \frac{q}{2})$ &$2$ &$\mp 1$\\
         $\pm (\frac{4}{15}q, \frac{2}{5}q)$ &$3$ &$\mp 1$\\
         $\pm (\frac{q}{5},\frac{4}{15}q)$ &$4$ &$\mp 1$
    \end{tabular}
\end{table}
Notice that, by Equation (\ref{eq:sys}), either $r=1$ or $m \equiv \pm q \pmod{r}$. 
It remains to prove that, with these choices, $y\in \N$, $z\in \Z$, and $y \mid kz^2-1$. By construction $y >0$, and
$$q^2-km^2 \equiv q^2-m^2 \equiv 0 \pmod{r},$$
hence $y \in \N$. Now
$$z=\frac{\gamma'y+m}{q}=\frac{\gamma'(q^2-km^2)+rm}{rq}=\frac{\gamma'q^2-rm-m\eta q+rm}{rq}=\frac{\gamma' q -\eta m}{r}$$
and
\begin{equation*}
\begin{split}
kz^2-1 &=\frac{k\gamma'^2q^2-2k\gamma' q \eta m +k\eta^2 m^2-r^2}{r^2}=\frac{\eta^2(km^2-q^2)+q^2\eta^2-2q\eta(r+\eta q)+k\gamma'^2q^2-r^2}{r^2}=\\
&=\frac{\eta^2(km^2-q^2)-q^2\eta^2-2q\eta r+k\gamma'^2q^2-r^2}{r^2}=\frac{\eta^2(km^2-q^2)-(q \eta +r)^2+k\gamma'^2 q^2}{r^2}=\\
&=\frac{\eta^2(km^2-q^2)+k \gamma'^2(-km^2+q^2)}{r^2})= y \frac{k\gamma'^2-\eta^2}{r}.
\end{split}
\end{equation*}
thus the claim holds if $r=1$. If instead $r>1$, we have from (\ref{eq:sys}) that
$$k\gamma'\alpha r \pm k\gamma' q=r +\eta q,$$
from which
\begin{equation} \label{eq:brutto}
    (\eta \mp \gamma')q \equiv 0 \pmod{r},
\end{equation}
and this assures that $z\in \Z$, since $m \equiv \pm q \pmod{r}$. Moreover, if $(q,r)=1$, (\ref{eq:brutto}) also gives that
$$k\gamma'^2-\eta^2\equiv \gamma'^2-\eta^2\equiv 0 \pmod{r};$$
if instead $(q,r)>1$, let us consider $r=2$, as the other cases are analogous. Then $q$ is even, and $\gamma'$ is odd. Now
$$q \eta = km\gamma'-r=(k\gamma' \alpha -1)r \pm k\gamma'q,$$
and if $2^l$ divides exactly $q$, it divides exactly $k\gamma' q$, but $q \mid k\gamma' \alpha -1$, thus $2^{l+1} \mid (k\gamma' \alpha -1)r$, from which $2^l$ divides exactly $q \eta$, i.e $\eta$ is odd. Therefore $y \mid kz^2 -1$, as claimed.
\item Let $E=F''_{q,\gamma'}$. Now the equation is
$$(\gamma' y - qk z)^2=q^2k-ry.$$
If $-m=\gamma'y-qkz$, then
$$y=\frac{q^2k-m^2}{r}=\frac{qkz-m}{\gamma'}$$
and
\begin{equation} \label{eq:brutto2}
    \begin{cases}
\gamma' m=r+\eta q\\
m=\alpha r + \delta q
\end{cases}
\end{equation}
with $\alpha\in (-\frac{q}{2},\frac{q}{2})$ the inverse of $\gamma'$ modulo $q$.\\
We need to ensure again that $y\in \N$, $z\in \Z$ and $y \mid kz^2-1$. We let again $r$ vary in $(0,\sqrt{k})$ and $\delta \in [-\sqrt{k},\sqrt{k}]$, so that $k \equiv 1 \pmod{r}$. Now
$$z=\frac{\gamma'y+m}{qk}=\frac{\gamma'(q^2k-m^2)+mr}{rqk}=\frac{\gamma' q^2k - rm - \eta q m +rm}{rqk}=\frac{\gamma'q k - \eta m}{rk}.$$
Notice that, since $(r,k)=1$, $k=p^n$ is coprime with at least one of $\eta,m$ (otherwise $p \mid r$ by Equation (\ref{eq:brutto2})); however we are going to prove later that we can let $k$ divide either $\eta$ or $m$ (say respectively $(\eta,m)=(k\eta',m')$ or $(\eta,m)=(\eta',km')$), so $z \in \Z$ if and only if 
$$\gamma' q - \eta' m' \equiv 0 \pmod{r}.$$
Notice however that $\eta'\equiv \eta \pmod{r}$ and $m' \equiv m \pmod{r}$. Reasoning as in the first case, we obtain easily that $y,z \in \Z$ if $\delta \equiv \pm 1 \pmod{r}$. Again analogously as above we see that $y \mid kz^2-1$ if and only if
$$\gamma'^2-k\eta'^2\equiv 0 \pmod{r},$$
and the argument above adapts perfectly. It only remains to prove that we can always arrange $r\in (0,\sqrt{k})$ and $\delta\in [-\sqrt{k},\sqrt{k}]$ such that
\begin{equation}
    \begin{cases} \label{eq:sys2}
\delta \equiv 1 \pmod{r}\\
q^2k-m^2>0\\
k \mid \eta \text{ or } k \mid m
\end{cases}
\end{equation}
Recall that by equation (\ref{eq:brutto2})
$$\eta m q =m(\gamma'm-r)=(\alpha r +\delta q)((\gamma'\alpha-1)r+\gamma'\delta q).$$
Assume $(k,q)=1$. Then we want $k$ to divide $m$, that is
$$\delta \equiv \underbrace{(-\alpha q^{-1})}_{=\omega}r \pmod{k},$$
and we want the other conditions in (\ref{eq:sys2}) to be satisfied.
It is again not hard to see that this is actually possible for every value of $\omega$ and for any $k$, and we show it explicitly for $k=25$:
\begin{center}
\begin{tabular}{c|c|c}
    $\omega$ &$r$ &$\delta$\\
    \hline
    0 &1 &1\\
    1 &1 &1\\
    2 &1 &2\\
    3 &1 &3\\
    4 &1 &4\\
    5 &$1/4$ &$\pm 5$\\
    6 &4 &$-1$\\
\end{tabular}\qquad
\begin{tabular}{c|c|c}
    $\omega$ &$r$ &$\delta$\\
    \hline
    7 &4 &3\\
    8 &3 &$-1$\\
    9 &3 &2\\
    10 &$2/3$ &$\pm 5$\\
    11 &2 &$-3$\\
    12 &2 &$-1$
\end{tabular}
\end{center}
Obviously for $-\omega$ we choose the same $r$ as for $\omega$ and $\delta$ opposite in sign. For $\omega=5,10$ there are $2$ cases, depending on whether $\alpha$ is positive or negative: for instance, if $\omega=5$ and $\alpha>0$, we choose $(r,\delta)=(4,-5)$, while if $\alpha <0$ we choose $(r,\delta)=(1,5)$.

If instead $(q,k)>1$, we proceed analogously to show that we can make $k$ divide $\eta$: first of all we divide the expression $\eta q=(\gamma'\alpha-1)r+\gamma'\delta q$ by $q$, obtaining
$$\eta = \frac{\gamma'\alpha-1}{q}r+\gamma' \delta .$$
Then we apply exactly the same argument as before with $\omega\equiv -(\frac{\gamma'\alpha-1}{q})\gamma'^{-1} \pmod{k}$, and we are done.
\end{enumerate}
\end{proof}

We have proven:

\begin{teo} \label{teo:finale3}
Let $X$ be an elliptic K3 surface of Picard rank $3$, with $\NS(X)=U\oplus \langle -2k\rangle$, $k \ge 2$. Then the following are equivalent:
\begin{enumerate}
    \item $X$ has zero entropy;
    \item $X$ admits a unique elliptic fibration $|F|$ (of maximal rank);
    \item $\Aut(X)=\Aut(X,|F|)=\langle \tau,\sigma \rangle$, where $\tau,\sigma$ are defined in Lemma \ref{lemma:facile};
    \item For all $r\in \N$ with $r^2<k$, $r$ divides $k-1$;
    \item $k\in \Lc_1= \{2,3,4,5,7,9,13,25\}$.
\end{enumerate}
\end{teo}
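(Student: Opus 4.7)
The proof assembles the results established earlier in the section; essentially all the hard work has already been done. I will circulate the implications as
$$(5) \iff (4) \iff (2) \iff (1) \iff (3).$$

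The equivalence $(5) \iff (4)$ is exactly the lemma characterizing the set $\mathcal{L}_1$. For $(4) \iff (2)$, Lemma \ref{lemma:choice} shows that up to the form of the coordinates, any candidate $E \in \NS(X)$ for an elliptic fibration distinct from $|F|$ must be of the form $F'_{q,\gamma'}$ or $F''_{q,\gamma'}$. If (4) holds, Proposition \ref{prop:conti3} rules out the nefness of all such candidates, so $|F|$ is the only elliptic fibration; conversely, if (4) fails, Proposition \ref{prop:parteuno} exhibits an explicit second elliptic fibration $F'_{q,1}$, so (2) fails.

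For $(2) \iff (1)$, the key is Remark \ref{oss:ext}: every elliptic fibration on $X$ is automatically of maximal rank (because $\langle -2k \rangle$ is unique in its genus, forcing every primitive embedding $U \hookrightarrow \NS(X)$ to have orthogonal complement isometric to $\langle -2k \rangle$, hence no reducible fibers). Thus having infinitely many sections is automatic for any elliptic fibration, and the corollary to Oguiso's theorem stated just after Theorem \ref{teo:introfac} gives the equivalence between zero entropy and uniqueness of the elliptic fibration with infinite Mordell--Weil group; combined with Remark \ref{oss:ext} this is precisely (2). (One should also observe that every genus $1$ fibration on $X$ with infinite automorphism group is actually an elliptic fibration, which again follows from the uniqueness of $\langle -2k\rangle$ in its genus, so Oguiso's theorem reduces to the elliptic-fibration corollary in this setting.)

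Finally, for $(1) \iff (3)$, the implication $(3) \Rightarrow (1)$ is immediate from Theorem \ref{teo:cantat}: every automorphism in $\Aut(X,|F|)$ fixes the isotropic class $F$ and is therefore elliptic or parabolic, hence of zero entropy. For $(1) \Rightarrow (3)$, Oguiso's Theorem \ref{teo:introfac} provides an elliptic curve $C \subseteq X$ with $\Aut(X) = \Aut(X,|C|)$; by $(1) \iff (2)$ already shown, $|C| = |F|$, and Lemma \ref{lemma:facile} identifies $\Aut(X,|F|) = \langle \tau, \sigma \rangle$. No step here requires new computation, so the only technical ingredient is the already-performed case analysis in Proposition \ref{prop:conti3}, which is the genuinely delicate part of the argument.
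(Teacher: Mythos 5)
Your proposal is correct and matches the paper's approach: the paper states this theorem with the words ``We have proven:'', i.e.\ as an assembly of Lemma \ref{lemma:facile}, Remark \ref{oss:ext}, Corollary \ref{cor:primo3}, Proposition \ref{prop:parteuno}, the lemma characterizing $\Lc_1$, Proposition \ref{prop:conti3}, and Oguiso's theorem, which is exactly the chain of implications you spell out. The only slight looseness is in $(1)\Rightarrow(3)$, where ``$|C|=|F|$'' is cleanest to justify by noting that $\Aut(X,|F|)$ is infinite by Lemma \ref{lemma:facile} and invoking the uniqueness clause of Oguiso's theorem, rather than via $(1)\iff(2)$; this is a cosmetic point, not a gap.
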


A natural question is whether the elliptic K3 surfaces with $\NS(X)$ of one of these $8$ types admit other genus $1$ fibrations. As a corollary of the previous theorem, any other genus $1$ fibration must have no sections. Recall that, if $|E|$ is a genus $1$ fibration on $X$, we can define the \textit{degree} of $|E|$ as the minimum positive intersection $EC$, with $C$ varying among curves on $X$ (cf. \cite{keum} or \cite{huybrechts1}, Definition 11.4.3). $|E|$ is an elliptic fibration if and only if it has degree $1$. Moreover, if the degree $d$ of a genus $1$ fibration is greater than $1$, its associated Jacobian fibration $J(X)$ satisfies the property $\det(\NS(X))=d^2 \det(\NS(J(X))$ (cf. \cite{keum}, Lemma 2.1).

\begin{prop}
Let $X$ be an elliptic K3 surface of Picard rank $3$, with $\NS(X)=U\oplus \langle -2k\rangle$ and $k \in \{2,3,4,5,7,9,13,25\}$. Denote by $F$ the fiber of the given elliptic fibration. If $k\in \{2,3,5,7,13\}$ is prime, then $X$ admits a unique genus $1$ fibration, induced by $F$.

If instead $k=p^2\in \{4,9,25\}$ is a square, then a primitive element $F\ne E\in \NS(X)$ with $E^2=0$ induces a genus $1$ fibration if and only if $EF=p$.
\end{prop}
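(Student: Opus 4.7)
The plan is to combine Keum's formula $\det\NS(X)=d^2\det\NS(J(X))$ (recalled in the excerpt above) with the uniqueness of the elliptic fibration $|F|$ coming from Theorem \ref{teo:finale3}. Since the Jacobian $J(X)$ of any genus~$1$ fibration $|E|$ is a rank-$3$ elliptic K3 surface whose N\'eron-Severi lattice contains $U$ primitively, $|\det\NS(J(X))|\ge 2$, hence $d^2\le k$. For $k\in\{2,3,5,7,13\}$ this forces $d=1$ (the potential case $k=2,d=2$ is ruled out since it would require $|\det\NS(J(X))|=1$), so every genus~$1$ fibration on $X$ has a section and, by Theorem \ref{teo:finale3}, equals $|F|$. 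This proves the first assertion.

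For $k=p^2$, the same bound yields $d\in\{1,p\}$, with $d=1$ again forcing $E=F$. For $d=p$, consider the divisibility $\mathrm{div}(E):=\gcd_{v\in\NS(X)}(E\cdot v)$, which divides $d$. I would observe that $\mathrm{div}(E)=1$ forces $|E|$ to be elliptic (and hence $E=F$): picking $v$ with $E\cdot v=1$ and setting $v'=v-(v^2/2+1)E$ yields $(v')^2=-2$ and $E\cdot v'=1$, and $v'$ is effective by Riemann-Roch since $E$ is nef, so one of its $(-2)$-curve components is a section. Therefore for $E\ne F$ one has $\mathrm{div}(E)=p$. Evaluating against the basis $\{F,S_0,D\}$ forces $p\mid\alpha,\beta$ and $p\nmid\gamma$; combined with $\alpha\beta=p^2\gamma^2$ and primitivity, a standard parametrization yields
\[ E=[pa_0^2,\,pb_0^2,\,\pm a_0 b_0],\qquad \gcd(a_0,b_0)=1,\qquad p\nmid a_0 b_0, \]
so $E\cdot F=p b_0^2$, and the condition $E\cdot F=p$ is equivalent to $|b_0|=1$.

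For $|b_0|=1$ (the sufficient direction), $E=[p\gamma^2,p,\gamma]$ with $\gcd(p,\gamma)=1$, and Lemma \ref{lemma:int3} reduces nefness to $|y\gamma-pz|\ge 1$ for all $(-2)$-curves $[x,y,z]$ with $y>0$. The equality $y\gamma=pz$ combined with $xy=p^2z^2-1$ and $\gcd(p,\gamma)=1$ forces $py'=\pm 1$, impossible, so $E$ is nef.

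The main obstacle is the converse: $|b_0|\ge 2$ implies $E$ is not nef. By Proposition \ref{prop:periodic}, nefness depends only on $a_0\bmod pb_0$, so the check reduces to finitely many residues for each $b_0$. Using the family $C=[(p^2z^2-1)/y,\,y,\,z]$ of $(-2)$-curves (valid whenever $y\mid p^2z^2-1$) and Lemma \ref{lemma:int3}, one obtains
\[ E\cdot C = \frac{p}{y}\bigl[(a_0 y - pb_0 z)^2 - b_0^2\bigr], \]
which is negative precisely when $|a_0y-pb_0z|<b_0$. For $p=2$, the choice $y=1$ alone covers every residue (the relevant $z$-interval has length $2/p=1$). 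For $p\in\{3,5\}$, the remaining residues are handled by small additional denominators $y\in\{2,3,4\}$ through a short case analysis, with the coprimality constraints $\gcd(a_0,b_0)=\gcd(a_0,p)=1$ ensuring that the boundary residues $a_0\equiv 0\pmod{b_0}$ or $a_0\equiv 0\pmod{p}$ contribute no valid $a_0$.
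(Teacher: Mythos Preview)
Your argument is essentially the paper's, and the logic is sound, but watch out for a recurring slip coming from the basis of $U$: in the $\{F,S_0\}$ basis one has $(\alpha-\beta)\beta=k\gamma^2$, not $\alpha\beta=k\gamma^2$, so the correct first coordinate is $\alpha=p(a_0^2+b_0^2)$ (and $E=[p(\gamma^2+1),p,\gamma]$ when $b_0=1$); likewise the constraint on a $(-2)$-curve is $y\mid kz^2-1$ (equation~\eqref{eq:div3}), not $xy=kz^2-1$. These slips are harmless, since only $\beta,\gamma$ enter Lemma~\ref{lemma:int3} and the contradiction in the sufficient direction is simply that $p\mid y$ would force $p\mid(p^2z^2-1)$.

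The one place where you genuinely deviate from the paper is the route to the parametrization. The paper adapts Lemma~\ref{lemma:choice} and then invokes Proposition~\ref{prop:conti3} to exclude $\beta'\in\{1,p^2\}$, whereas your divisibility argument (if $\mathrm{div}(E)=1$ then a suitable $(-2)$-class produces a section, forcing $E=F$) is shorter and sidesteps that lengthy verification entirely. The endgame---showing that $|b_0|\ge2$ fails nefness by testing against $(-2)$-curves with $y\le p-1$---is exactly what the paper does as well (it uses $FC\le 2$ for $k=9$ and $FC\le 4$ for $k=25$), and it too leaves the details as a routine case check.
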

\begin{proof}
As observed above, any genus $1$ fibration $|E|$ on $X$ has no sections, so its degree is greater than $1$. From the discussion above, there cannot be any genus $1$ fibrations on $X$ if $\det(\NS(X))=2k$ is square-free. But if $k=2$ and there exists a genus $1$ fibration on $X$ with no sections, then necessarily it must have degree $2$ and $\det(\NS(J(X))=1$, which is impossible (there are no even unimodular lattices of rank $3$).

Assume instead that $k=p^2\in \{4,9,25\}$ is a square of a prime. If $E=[\alpha,\beta,\gamma]$ induces a genus $1$ fibration on $X$, reasoning as in Lemma \ref{lemma:choice} we have that $\beta=q^2\beta'$, with $\beta'=p$ (indeed, if $\beta'=1$ or $k=p^2$, then $E$ would be one of the $F',F''$, and we have proven in Proposition \ref{prop:conti3} that these elements are not nef). Assume $k=4$, so $\beta=2q^2$ and $\gamma=q\gamma'$, with $(q,\gamma')=1$. Then, if $E$ is nef, by Lemma \ref{lemma:int3} we have
$$4(2qz-\gamma')^2-4q^2 \ge 0$$
for all $z\in \Z$ (this is just the intersection of $E$ with the sections of $F$, divided by $q^2$). However by Proposition \ref{prop:periodic} we can choose $0\le \gamma < 2q^2$, so there exists a $0\le z_0 < q$ such that $|2qz_0-\gamma'| \le q$. Moreover, the number $|2qz_0-\gamma'|$ can be made strictly negative as soon as $\gamma'$ is not a multiple of $q$. Therefore, if $E$ is nef, then $\gamma'$ is a multiple of $q$; however $(q,\gamma')=1$ by assumption, so $q=1$, hence $\beta=2$.\\
If $k=9$, imposing that $E$ has a nonnegative intersection with all the effective $(-2)$-divisors $C$ with $FC \le 2$ forces similarly $\beta=3$, and it is immediate to check that all primitive divisors $E$ with $E^2=0$ and $EF=3$ are actually nef (using Proposition \ref{prop:periodic} we can restrict to $\gamma\in \{1,2\}$).\\
Finally, if $k=25$, the reasoning is analogous considering all effective $(-2)$-divisors $C$ with $FC \le 4$.
\end{proof}

\section{K3 surfaces of Picard rank $4\le \rho(X) \le 10$}

Let $X$ be an elliptic K3 surface of Picard rank $4 \le \rho(X) \le 10$, and assume the existence of an elliptic fibration $|F|$ on $X$ of maximal rank. Then $\NS(X)=U\oplus L$, where $L$ has no roots. In order to single out the N\'eron-Severi lattices of K3 surfaces of zero entropy, we want to apply Proposition \ref{prop:rho10} and Theorem \ref{teo:extension}. We will proceed inductively: we already have a complete list of lattices of rank $3$ of zero entropy, and Theorem \ref{teo:extension} allows us to obtain informations on the entropy of N\'eron-Severi lattices of higher rank. Recall that any even hyperbolic lattice of rank at most $10$ embeds in the K3 lattice (cf. \cite{nikulin9}, Theorem 1.14.4), hence the orthogonal complement $L$ of $U$ above can be any even negative definite lattice of rank $\rk(L)\le 8$.\newline

We start with the case $\rho(X)=4$; the procedure for higher ranks will be the same, but we will need the help of a computer. Consider an elliptic K3 surface $X$ with $\NS(X)=U\oplus L$, where
$$L=\begin{pmatrix}
-2k_1 &a\\
a &-2k_2
\end{pmatrix}$$
is a rank $2$, even, negative definite lattice. Since $L$ has no roots, we can assume that $2 \le k_1 \le k_2$; then, up to isometry of $L$, we can also assume that $|a|\le k_1$, and $a =k_1$ if $|a|=k_1$. Theorem \ref{teo:extension} reads:

\begin{teo} \label{teo:extension4}
Let $\NS(X)=U \oplus L$ as above, and assume that there exists $k \notin \{2,3,4,5,7,9,13,25\}$ such that $\langle -2k \rangle$ embeds primitively in $L$ and $|\det(L)|\ge 4k$. Then $X$ has positive entropy. In particular, if $k_1,k_2 \notin \{2,3,4,5,7,9,13,25\}$, then $X$ has positive entropy.
\end{teo}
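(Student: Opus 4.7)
The plan is to apply Theorem \ref{teo:extension} with $L' = \langle -2k \rangle$, using the classification of zero entropy lattices of Picard rank $3$ obtained in Theorem \ref{teo:finale3}.

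First I would set $L' = \langle -2k \rangle$, which embeds primitively in $L$ by hypothesis, and has corank $1$ in $L$. Since $k \notin \{2,3,4,5,7,9,13,25\} = \Lc_1$, Theorem \ref{teo:finale3} tells us that the K3 surface with N\'eron-Severi lattice $U \oplus \langle -2k \rangle$ has positive entropy. Now $|\det(L')| = 2k$, so the hypothesis $|\det(L)| \ge 4k$ translates into $|\det(L)| \ge 2|\det(L')|$. If $|\det(L)| > 4k$, the first clause of Theorem \ref{teo:extension} applies directly. Otherwise $|\det(L)| = 4k$, and since $\rho(X) = 4 \le 10$, the second clause of Theorem \ref{teo:extension} applies. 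Either way, $X$ has positive entropy.

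For the ``in particular'' statement, I would take $k = k_1$ (the smaller of $k_1, k_2$). The rank-$1$ sublattice generated by the first basis vector of $L$ is isometric to $\langle -2k_1 \rangle$ and is primitive, being spanned by an element of a basis. By assumption $k_1 \notin \Lc_1$, so it only remains to verify the determinant inequality $|\det(L)| \ge 4k_1$. Using the normalization $|a| \le k_1 \le k_2$, we compute
\begin{equation*}
|\det(L)| = 4k_1 k_2 - a^2 \;\ge\; 4k_1 k_2 - k_1^2 \;=\; k_1(4k_2 - k_1) \;\ge\; k_1(3k_1) \;\ge\; 6 k_1 \;>\; 4 k_1,
\end{equation*}
where we used $k_2 \ge k_1 \ge 2$. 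Therefore the first part of the theorem applies with $k = k_1$, giving the conclusion.

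There is really no hard step here: the heavy lifting is done by Theorem \ref{teo:extension} (Schur complement argument) and Theorem \ref{teo:finale3} (classification in Picard rank $3$). The only thing to check is the elementary determinant estimate in the normalization of $L$, which is immediate from $k_2 \ge k_1 \ge 2$. The significance of the result is that it reduces the classification in Picard rank $4$ to finitely many lattices, namely those $L$ such that both diagonal entries $-2k_i$ lie in a specific finite set, or $|\det(L)| < 4 \min(k_1, k_2)$ for the primitively embedded $\langle -2k \rangle$'s; this finite list can then be checked by hand or by \texttt{Magma}.
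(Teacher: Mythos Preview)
Your proof is correct and follows the same approach as the paper: reduce to Theorem \ref{teo:extension} with $L' = \langle -2k \rangle$ and verify the determinant bound via an elementary chain of inequalities. The only cosmetic difference is that the paper takes $k = k_2$ and bounds $\det(L) = 4k_1k_2 - a^2 \ge 3k_1k_2 \ge 6k_2 \ge 4k_2$, whereas you take $k = k_1$; both choices work equally well.
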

\begin{proof}
The first part is an immediate consequence of Theorem \ref{teo:extension}. For the second part, just notice that
$$\det(L)=4k_1k_2-a^2 \ge 3k_1k_2 \ge 6k_2\ge 4k_2,$$
hence we conclude using the first part.
\end{proof}

\begin{oss}
If we remove the condition $|\det(L)|\ge 4k$ above, then the theorem does not hold anymore. Consider for instance two K3 surfaces $X_1,X_2$ such that $\NS(X_i)=U \oplus L_i$, with
$$L_1=\begin{pmatrix}
-4 &0\\
0 &-4
\end{pmatrix}, \qquad L_2=\begin{pmatrix}
-6 &0\\
0 &-6
\end{pmatrix}.$$
Then $\langle -12 \rangle \hookrightarrow L_2$ and $|\det(L_2)|=36 \ge 24$, therefore the elliptic fibration of maximal rank on $U\oplus \langle -12 \rangle$ extends to an elliptic fibration of maximal rank on $U \oplus L_2$. On the contrary, $\langle -20 \rangle \hookrightarrow L_1$, but $|\det(L_1)|=16 < 40$, and indeed the elliptic fibration of maximal rank on $U\oplus \langle -20 \rangle$ does not extend to an elliptic fibration of maximal rank on $U \oplus L_1$. In fact we will see that $X_1$ has zero entropy.
\end{oss}

Table \ref{table:4} lists all the lattices $L$ unique in their genus (cf. the list in \cite{nebe}) not satisfying the condition in Theorem \ref{teo:extension4}:

\begin{table}[h]
\centering
\begin{tabular}{c|c|c|c} 
    $i$ &$-2k_1$ &$-2k_2$ &$a$\\
    \hline
$1$    &$-14$ &$-6$ &$3$\\
$2$    &$-10$ &$-4$ &$2$\\
$3$    &$-10$ &$-4$ &$0$\\
$4$    &$-8$ &$-6$ &$0$\\
$5$    &$-8$ &$-4$ &$2$\\
$6$    &$-6$ &$-6$ &$3$\\
\end{tabular}
\qquad
\begin{tabular}{c|c|c|c} 
    $i$ &$-2k_1$ &$-2k_2$ &$a$\\
    \hline

$7$    &$-6$ &$-6$ &$1$\\
$8$    &$-6$ &$-4$ &$2$\\
$9$    &$-6$ &$-4$ &$0$\\
$10$    &$-4$ &$-4$ &$2$\\
$11$    &$-4$ &$-4$ &$1$\\
$12$    &$-4$ &$-4$ &$0$\\
\end{tabular}

\caption{List of lattices of rank $4$ unique in their genus not satisfying the condition in Theorem \ref{teo:extension4}.}
\label{table:4}
\end{table}

All these $12$ lattices satisfy the condition that $\Or(L) \rightarrow \Or(A_L)$ is surjective. Therefore:

\begin{prop} \label{prop:counterex}
Let $X$ be a K3 surface, $\NS(X)=U\oplus L$, where $L$ is one of the $12$ rank $4$ lattices listed above. Then $X$ has a unique elliptic fibration up to automorphism.
\end{prop}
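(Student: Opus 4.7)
The strategy is to invoke Theorem \ref{teo:unique}, which characterises K3 surfaces with a unique elliptic fibration up to automorphism by two lattice-theoretic conditions on the orthogonal complement $L$ of $U$ in $\NS(X)$: that $L$ is unique in its genus, and that the restriction map $\Or(L) \to \Or(A_L)$ is surjective. The plan is simply to check that both conditions hold for each of the $12$ lattices listed in Table \ref{table:4}.

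Uniqueness in the genus is automatic from the way the table was produced: it was extracted from Nebe's catalogue \cite{nebe} of rank $2$ even negative definite lattices unique in their genus, by keeping exactly those $L$ that fail the sufficient positive-entropy criterion of Theorem \ref{teo:extension4}. Surjectivity of $\Or(L) \to \Or(A_L)$ must then be verified case by case. For each of the $12$ explicit Gram matrices one computes the discriminant form $(A_L, q_L)$ and checks that every isometry of $(A_L, q_L)$ is induced by some isometry of $L$. The discriminant groups here are small (of order at most $75$), and in many cases $\Or(A_L)$ is already generated by $\pm \id$, which obviously lifts to $\pm \id \in \Or(L)$; the remaining cases are dispatched with \texttt{Magma}, as already done for the preceding classification results.

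There is one subtlety to flag: Theorem \ref{teo:unique} as stated requires the period $\omega_X \in \T(X)_\C$ to be very general whenever $\rho(X)$ is even, and here $\rho(X) = 4$. However, the remark immediately following the proof of Theorem \ref{teo:unique} states that the implication we actually need — that the two lattice conditions together imply uniqueness of the elliptic fibration up to automorphism — does not rely on this generality hypothesis. Thus, once surjectivity of $\Or(L)\to \Or(A_L)$ has been checked, the conclusion follows for every K3 surface with $\NS(X) = U \oplus L$ as in the table, without any restriction on the period.

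The main effort is the case-by-case surjectivity verification, which is purely mechanical. I do not expect any genuine geometric obstacle here: the whole proof reduces to being careful about which direction of Theorem \ref{teo:unique} is being applied (in order to dispense with the generality assumption on $\omega_X$), together with a finite computation on $12$ rank $2$ lattices of small discriminant.
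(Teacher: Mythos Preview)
Your proposal is correct and follows exactly the paper's approach: the paper simply remarks that ``All these $12$ lattices satisfy the condition that $\Or(L) \rightarrow \Or(A_L)$ is surjective'' (with uniqueness in the genus already built into Table~\ref{table:4}), and then invokes Theorem~\ref{teo:unique} together with the remark that the relevant implication does not need the generality assumption on the period. Your treatment is slightly more detailed but otherwise identical.
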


\begin{prop} \label{prop:pos4}
The two lattices $U\oplus L_i$, where $i \in \{4,5\}$ in the list above, have positive entropy.
\end{prop}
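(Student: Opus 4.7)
The plan is to exhibit, for each $i \in \{4, 5\}$, a second genus $1$ pencil $|E|$ on $X$ (with $\NS(X) = U \oplus L_i$) distinct from the given elliptic fibration $|F|$ and whose automorphism group $\Aut(X,|E|)$ is infinite. Since by Proposition \ref{prop:counterex} the given $|F|$ is the unique elliptic fibration up to $\Aut(X)$, such an $|E|$ must be a multisection pencil (degree $d \geq 2$, no section). The existence of two inequivalent genus $1$ pencils with infinite automorphism group then implies positive entropy by Theorem \ref{teo:introfac}. A direct application of Theorem \ref{teo:extension} is not available here: any primitive sublattice $\langle -2k\rangle \hookrightarrow L_i$ with $k\notin \Lc_1$ produces values $k\ge 16$ for $L_4$ and $k\ge 8$ for $L_5$, and in both cases $|\det L_i|<2|\det\langle -2k\rangle|$.

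\paragraph{Steps.} The divisibility $d^2 \mid |\det \NS(X)|$ combined with $|\det L_4| = 48$, $|\det L_5| = 28$ forces $d = 2$ in both cases. The Jacobian K3 surface $J$ of such a pencil then has $|\det \NS(J)| = |\det L_i|/4 \in \{12, 7\}$. Concretely, for each $i$ I would:
\begin{enumerate}
\item Find a primitive isotropic $E\in \NS(X)$ with $E\cdot \NS(X) = 2\mathbb{Z}$, by writing $E=[\alpha,\beta,\gamma]$ in the basis $\{F,S_0,\mathcal{B}\}$, solving $E^2=0$ with the appropriate divisibility constraint, and choosing the smallest-norm primitive solution.
\item Verify that $E$ is nef using Lemma \ref{lemma:int}; by Proposition \ref{prop:periodic} only finitely many residue classes of $(-2)$-divisors need to be tested, so this is a finite computation.
\item Identify $\NS(J)=U\oplus L_J$ and show that the Jacobian fibration has positive Mordell--Weil rank, so that translations by $\MW(J)$ embed into $\Aut(X,|E|)$ and make it infinite. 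For $L_4$ the natural candidate is $L_J=L_{10}$, the root-free rank-$2$ lattice of determinant $12$ from Table \ref{table:4}, which has maximal MW rank $2$ by Shioda--Tate.
\end{enumerate}

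\paragraph{Main obstacle.} The main obstacle is step (3): for $L_5$ one has $|\det L_J|=7$, and every even negative definite lattice of rank $2$ and determinant $7$ contains a root, so the naive argument would give reducible fibers. One must perform a careful Shioda--Tate computation to show that the Mordell--Weil rank of the Jacobian fibration is still positive after accounting for the root contributions. All three steps reduce to finite lattice-theoretic verifications that can be carried out with \texttt{Magma}, consistent with the paper's general computational approach.
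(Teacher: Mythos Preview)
Your approach rests on a misreading of Proposition \ref{prop:counterex}. That proposition says $X$ has a unique elliptic fibration \emph{up to automorphisms}, i.e.\ $N(X)=1$; it does not say $X$ has only one elliptic fibration. There can be (and in fact there are) infinitely many distinct elliptic fibrations, all conjugate under $\Aut(X)$. Oguiso's criterion requires literal uniqueness of the genus $1$ pencil with infinite automorphism group, not uniqueness up to $\Aut(X)$. So the sentence ``such an $|E|$ must be a multisection pencil'' is unwarranted, and the whole strategy of looking for degree-$2$ pencils with no section is an unnecessary detour.

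The paper's proof is far more direct: for each $i$ one writes down an explicit primitive isotropic class $E\in\NS(X)$ (namely $E=[25,12,6,2]$ for $L_4$ and $E=[15,7,4,2]$ for $L_5$) that already satisfies $E\cdot S_0=1$, so $|E|$ is an honest elliptic fibration with a section. One then verifies via Lemma \ref{lemma:int} that $E$ is nef and in fact induces a fibration of maximal rank. This immediately gives a second elliptic fibration with infinitely many sections, hence positive entropy. No Jacobian, no Shioda--Tate bookkeeping, and no obstacle in the $L_5$ case. Your steps (1)--(3) could in principle be pushed through (for $L_5$ the Jacobian lattice of determinant $7$ has root part $A_1$, so Mordell--Weil rank $1>0$), but they solve a harder problem than the one actually posed.
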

\begin{proof}
We show that both N\'eron-Severi lattices admit a second elliptic fibration, isomorphic to the original one by the previous proposition. For $\NS(X)=U\oplus L_4$ consider
$$E=[25,12,6,2].$$
$E$ is primitive, $E^2=0$ and $ES_0=1$, where $S_0$ is the zero section of $F$. Applying Lemma \ref{lemma:int}, $E$ is nef (and inducing an elliptic fibration of maximal rank) if and only if, for all effective $(-2)$-divisors $[x,y,z_1,z_2]\in \NS(X)$, we have
$$-\frac{1}{2} \|v\|_{L_4} - 12^2 = 4(6y-12z_1)^2+3(2y-12z_2)^2-12^2=12[12(y-2z_1)^2+(y-6z_2)^2-12] > 0.$$
If $[x,y,z_1,z_2]\in \NS(X)$ is an effective $(-2)$-curve not satisfying this inequality, then necessarily $y=2z_1$ and $(y-6z_2)^2 < 12$, so $(z_1-3z_2)^2 < 3$. Recall that by Lemma \ref{lemma:int} we have that
$$y \mid -\frac{1}{2} \|z\|_{L_4} -1 = 4z_1^2+3z_2^2-1.$$
From above we have $z_1-3z_2=0,\pm 1$. However, it is rather straightforward to check that all three possibilities cannot happen because of the divisibility above.\\
For $\NS(X)=U\oplus L_5$ the reasoning is analogous, considering $E=[15,7,4,2]$.
\end{proof}

Therefore it only remains a list of $10$ candidate N\'eron-Severi lattices of rank $4$ and zero entropy. We postpone the proof that all these $10$ N\'eron-Severi lattices actually have zero entropy to the end of the section; first, we want to find a similar list of candidate lattices of rank $5 \le \rho(X) \le 10$. The algorithms described here below can be found in the ancillary folder \texttt{Find\_list\_candidates}.\newline

Recall that, since $\rho(X)\le 10$, we only have to worry about the N\'eron-Severi lattices decomposing as $U\oplus L$, with $L$ unique in its genus (cf. Proposition \ref{prop:rho10}). Equivalently, $L$ must be a multiple of a lattice in Watson's list (cf. Theorem \ref{teo:watson}). If we can bound these multiples, we would only have to deal with a finite number of lattices. The idea of the following algorithm is to use Theorem \ref{teo:extension}; for, fix a lattice $L$ of rank $n$ in Watson's list and choose any primitive sublattice $L'\subseteq L$ of corank $1$. Then a high multiple of $L$ will satisfy the two assumptions of Theorem \ref{teo:extension}: the finiteness of the list of candidate lattices in rank $n-1$ implies that $U\oplus L'(m)$ will have positive entropy for $m\gg 0$, and $|\det(L(m))|\ge 2 |\det(L'(m))|$ for $m\gg 0$.

\begin{alg} \label{alg:lista}
Fix $2\le n \le 7$. Let $\Lc_n$ be the finite list of candidate lattices of rank $n$ (this list is finite by an inductive argument, since we have such a list for $n=2$). If $L \in \Lc_n$, we define $b(L)$ to be the greatest integer $b \ge 1$ such that $\frac{1}{b} L$ is still an even integral lattice. We put $b_n:=\max_{L\in \Lc_n}{b(L)}<\infty$.\\
Now let $L$ be an even lattice of rank $n+1$ with $b(L)=1$ and unique in its genus, and consider its first rank $n$ principal minor $L'$ (i.e. choose any basis of $L$ and let $L'$ be the primitive sublattice generated by the first $n$ elements of the basis). Let $c(L)$ be the smallest integer greater or equal than $2\frac{|\det(L')|}{|\det(L)|}$, and $d_n(L)=\max\{b_n,c(L)\}<\infty$.\\
Consider the finite (by Theorem \ref{teo:watson}) list
$$\Lc'_{n+1}:=\{ L(m) \mid L \text{ even unique in its genus},\  b(L)=1, \  \rk(L)=n+1,\ m\in [1,d_n(L)]\}.$$
If $L\notin \Lc'_{n+1}$, then $L$ has positive entropy: indeed, either it is not unique in its genus, or $L= N(m)$ for some $N\in \Lc_{n+1}'$, $m > d_n(N)\ge b_n$. But in this case, by construction of the $b_n$, the first rank $n$ principal minor $L'$ of $L$ is not in $\Lc_n$, thus $U\oplus L'$ has positive entropy. Moreover,
$$\frac{|\det(L)|}{|\det(L')|}= m \frac{|\det(N)|}{|\det(N')|} \ge d_n(N) \frac{|\det(N)|}{|\det(N')|}\ge 2,$$
and we conclude by using Theorem \ref{teo:extension}.\\
We remove from $\Lc'_{n+1}$ all lattices $L$ such that $\min(L)=2$. Now, for every $L\in \Lc'_{n+1}$, we consider various immersions $L'\hookrightarrow L$, where $\rk(L')=n$ (first, we consider the $n+1$ principal minors of $L$, then other sufficiently many random primitive rank $n$ sublattices) and we check if $L'\notin \Lc_n$ and $|\det(L)|\ge 2|\det(L')|$. If both conditions hold, we remove $L$ from $\Lc'_{n+1}$. At the end, we return $\Lc_{n+1}:=\Lc'_{n+1}$.
\end{alg}

\begin{oss}
Checking if $L'\notin \Lc_n$ is computationally very fast, since we have only to check if the genus of $L'$ coincides with the genus of some lattice in $\Lc_n$ (recall that all lattices in $\Lc_n$ are unique in their genus).
\end{oss}

\begin{oss} \label{oss:alg}
Algorithm \ref{alg:lista} works similarly also for $n\ge 8$, changing the condition $|\det(L)|\ge 2|\det(L')|$ with the more restrictive $|\det(L)|> 2|\det(L')|$ (and defining $c(L)$ as the smallest integer \textit{strictly} greater than $2\frac{|\det(L')|}{|\det(L)|}$).
\end{oss}

To complete the classification at each step we search the candidate lattices for possible new elliptic fibrations, just as we did in Proposition \ref{prop:pos4}. The following lemma contains the algorithm that we will use to check whether a primitive element $E\in \NS(X)$ with $E^2=0$ is nef.

\begin{lemma} 
Let $X$ be an elliptic K3 surface with $\NS(X)=U \oplus L$ and $L$ without roots. Let $E=[\alpha,\beta,\gamma]\in \NS(X)$ be a primitive element with $E^2=0$. To any $v\in L$ we associate the finite set
$$I(v)=\left\{ y\in \N : y \mid -\frac{1}{2}\|v\|_L-\beta^2, \  z=\frac{1}{\beta}(y\gamma-v)\in L \text{ and } y \mid -\frac{1}{2}\|z\|_L-1\right\},$$
where $z\in L$ means that $z$ has integer entries. Then $E$ is nef if and only if
$$I(v)=\emptyset \text{ for all } v \in L \text{ with } -\frac{1}{2}\|v\|_L < \beta^2.$$
\end{lemma}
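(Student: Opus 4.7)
The plan is to convert the general nefness criterion --- $E$ is nef iff every $D=[x,y,z]$ with $D^2=-2$ and $y>0$ satisfies $E\cdot D\ge 0$, which is the lemma proved just before Lemma \ref{lemma:int} --- into a condition on the vectors $v=y\gamma-\beta z\in L$ that feature in the restatement of the intersection formula. The key identity to keep at hand is $-\tfrac{1}{2}\|v\|_L=\beta(\beta+my)$ with $m=E\cdot D$, from Lemma \ref{lemma:int}. Since effectiveness of $E$ forces $\beta=E\cdot F>0$ and we require $y>0$, the inequality $m<0$ is exactly equivalent to $-\tfrac{1}{2}\|v\|_L<\beta^{2}$; this pins down the finite range of $v$ one has to search.

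For the ``only if'' direction I would start from a bad divisor $D=[x,y,z]$ witnessing the failure of nefness, set $v:=y\gamma-\beta z$, and read off each condition in $I(v)$: the relation $z=(y\gamma-v)/\beta$ forces $z\in L$; the integrality of $x$ obtained from $D^{2}=-2$ via the quadratic form on $U\oplus L$ is equivalent to $y\mid -\tfrac{1}{2}\|z\|_L-1$; and the integrality of $m$, combined with $\beta(\beta+my)=-\tfrac{1}{2}\|v\|_L$, gives $y\mid -\tfrac{1}{2}\|v\|_L-\beta^{2}$. Hence $y\in I(v)$. Conversely, starting from any $y\in I(v)$ with $v$ in the indicated range, I would set $z:=(y\gamma-v)/\beta$ and $x:=y+(-\tfrac{1}{2}\|z\|_L-1)/y$, which are integral by the two divisibility conditions in $I(v)$; then $D=[x,y,z]$ satisfies $D^{2}=-2$ by a one-line check, it is effective because $y>0$ by the Remark preceding Lemma \ref{lemma:int}, and plugging $v$ and $y$ back into the identity yields $E\cdot D=(-\tfrac{1}{2}\|v\|_L-\beta^{2})/(\beta y)<0$, so $E$ is not nef.

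The one mildly delicate point, which I would flag explicitly, is that the first divisibility in the definition of $I(v)$ is actually automatic from the other two together with $E,D\in\NS(X)$ (in fact even $\beta y$ divides $-\tfrac{1}{2}\|v\|_L-\beta^{2}$); recording it explicitly in the definition is what makes the algorithmic sweep efficient. Finiteness of each $I(v)$ is immediate since $y$ must divide a fixed nonzero integer, and only finitely many $v\in L$ satisfy the norm bound because $L$ is negative definite, so the criterion is genuinely effective. The main difficulty is cosmetic: keeping track of which divisibility comes from which equation and making sure the sign conventions on $\beta$ and $y$ line up correctly.
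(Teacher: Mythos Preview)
Your proposal is correct and follows essentially the same route as the paper: both directions hinge on the identity $-\tfrac{1}{2}\|v\|_L-\beta^2=\beta m y$ from Lemma~\ref{lemma:int}, reading off the three conditions defining $I(v)$ from a bad $(-2)$-divisor in one direction and reconstructing such a divisor from a $y\in I(v)$ in the other. The paper verifies the first divisibility $y\mid -\tfrac{1}{2}\|v\|_L-\beta^2$ via the congruence $-\tfrac{1}{2}\|y\gamma-\beta z\|_L\equiv \beta^2\bigl(-\tfrac{1}{2}\|z\|_L\bigr)\pmod{y}$ combined with the second divisibility, which is exactly the redundancy you flag in your ``mildly delicate point''; your derivation via integrality of $m$ is an equivalent one-liner.
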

\begin{proof}
$E$ is not nef if and only if there exists $C=[x_0,y_0,z_0]$ with $C^2=-2$, $y_0>0$ such that $EC<0$. Let $v=y_0\gamma-\beta z_0$. Then Lemma \ref{lemma:int} shows that $y_0 \mid -\frac{1}{2}\|z_0\|_L-1$ and $-\frac{1}{2}\|v\|_L < \beta^2$. Moreover
$$-\frac{1}{2}\|v\|_L-\beta^2 = -\frac{1}{2}\|y_0\gamma-\beta z_0\|_L-\beta^2 \equiv -\frac{1}{2}\|-\beta z_0\|_L-\beta^2 = \beta^2\left(-\frac{1}{2}\|z_0\|_L-1\right)\equiv 0 \pmod{y_0},$$
hence $y_0 \in I(v)$ and thus $I(v)\ne \emptyset$. Conversely, assume that $y_0 \in I(v)$ for a $v\in L$ with $-\frac{1}{2}\|v\|_L < \beta^2$. Put $z_0=\frac{1}{\beta}(y_0\gamma-v)\in L$ and choose $x_0$ such that $C=[x_0,y_0,z_0]$ has $C^2=-2$ (this is possible since $y_0 \mid -\frac{1}{2}\|z_0\|_L-1$). Then Lemma \ref{lemma:int} shows that $EC<0$, hence $E$ is not nef.
\end{proof}

\begin{oss}
\begin{itemize}
    \item This lemma gives a practical way to decide whether a primitive divisor of square zero is nef. Indeed, the set of $v\in L$ satisfying $\frac{1}{2}\|v\|_L < \beta^2$ is finite, since $L$ is negative definite, so we only have to perform a finite number of checks.
    \item The lemma can be generalized to any $L$. Let $L$ be any even negative definite lattice, and $E\in U\oplus L$ primitive of square zero. Consider the root part $R=L_{root}\subseteq L$, and say that $R$ is generated by effective roots $r_1,\ldots,r_m$. Then the effective roots (i.e. effective divisors of square $-2$) in $U\oplus L$ can be orthogonal or not to the given elliptic fiber $F=[1,0,0]\in \NS(X)$. If $r$ is an effective root with $rF=0$, then $r$ is a linear combination of $r_1,\ldots,r_m$ with nonnegative coefficients. If instead $rF>0$, then $r=[x,y,z]\in\NS(X)$ has $y>0$, and hence we can apply the previous lemma. Summing up, we obtain that $E$ is nef if and only if the sets $I(v)$ as in the lemma are empty, and $Er_i\ge 0$ for all $i=1,\ldots,m$.
    \item This lemma is a result analogous to Proposition 4.1 in \cite{shimada2}. Shimada's algorithm checks the nefness of a divisor of positive square, while ours checks it for elements of square $0$. Both algorithms boil down to listing some short vectors in $L=U^\bot \subseteq \NS(X)$.
\end{itemize}

\end{oss}

\begin{cor} \label{cor:alg}
Let $X$ be an elliptic K3 surface with $\NS(X)=U \oplus L$ and $L$ without roots. Let $F=[1,0,0]\in \NS(X)$ be the given elliptic fibration on $X$. Checking whether there exists a primitive nef $E\in \NS(X)$ such that $E^2=0$, $EF=\beta$ is a computationally finite problem for any $\beta \ge 2$.
\end{cor}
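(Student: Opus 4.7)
The plan is to combine the ``periodicity'' result of Proposition \ref{prop:periodic} with the nefness criterion given by the lemma immediately preceding the corollary, and argue that both reductions involve only finitely many checks for fixed $\beta$.

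First I would parametrize the candidates. A primitive element $E=[\alpha,\beta,\gamma]\in \NS(X)$ of square $0$ is determined by $\beta$ and $\gamma$, since equation \eqref{eq:div} forces $\alpha=\beta+\frac{-\frac{1}{2}\|\gamma\|_L}{\beta}$, which in particular requires $2\beta\mid \|\gamma\|_L$. By Proposition \ref{prop:periodic}, the nefness of $E$ depends only on the class of $\gamma$ modulo $\beta$ in $L$. Since $L$ has finite rank $r$, the quotient $L/\beta L$ is finite of order $\beta^r$, so there are only finitely many residues of $\gamma$ to consider. For each such residue we pick a representative, discard it if the integrality condition on $\alpha$ fails or if the resulting triple is not primitive, and otherwise we obtain a single candidate $E$.

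Next I would apply the preceding lemma to each such candidate. It asserts that $E$ is nef if and only if $I(v)=\emptyset$ for every $v\in L$ with $-\frac{1}{2}\|v\|_L<\beta^2$. Since $L$ is negative definite, the set of such $v$ is a finite set of lattice points in a bounded region, so it can be enumerated. For each $v$ the set $I(v)$ consists of certain positive divisors $y$ of the integer $-\frac{1}{2}\|v\|_L-\beta^2$ satisfying two further divisibility/integrality conditions; in particular $I(v)$ is contained in a finite set of divisors and can be computed in finite time. Hence checking nefness of each candidate $E$ requires only a finite amount of computation.

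Combining the two reductions, the total test consists of finitely many residues $\gamma\Mod{\beta}$, and for each a finite nefness check, which proves the corollary. I do not anticipate any real obstacle: the only mildly delicate point is recording the right integrality and primitivity conditions when choosing a representative of $\gamma\Mod{\beta}$, but these are straightforward congruence tests, and Proposition \ref{prop:periodic} guarantees that the answer does not depend on the choice of representative.
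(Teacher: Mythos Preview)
Your argument is correct and follows the same approach as the paper: reduce to finitely many candidates for $\gamma$ using Proposition~\ref{prop:periodic}, determine $\alpha$ from $(\beta,\gamma)$ via equation~\eqref{eq:div}, and then apply the finite nefness test from the preceding lemma to each candidate. The paper's proof is essentially a two-line version of what you wrote (it takes representatives with entries of $\gamma$ in $(-\beta,\beta]$, slightly larger than your $L/\beta L$ but equally finite), so there is nothing to add.
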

\begin{proof}
Proposition \ref{prop:periodic} shows that without loss of generality we can consider $E=[\alpha,\beta,\gamma]$ with all the entries of $\gamma$ in the interval $(-\beta,\beta]$. Since $\alpha$ is uniquely determined by $\beta,\gamma$, this gives only a finite number of such divisors $E$. We just apply the previous lemma to each of them.
\end{proof}

The previous lemma allows us to search for elliptic fibrations on our candidate lattices. Notice that each list $\Lc_n$ in Algorithm \ref{alg:lista} contains only lattices $L$ unique in their genus, so checking whether an elliptic curve $E$ on $L$ induces an elliptic fibration with infinitely many sections coincides with checking whether the fibration $|E|$ has \textit{at least} a section.

\begin{alg} \label{alg:ell}
Let $\Lc_n$ be the list of lattices obtained in Algorithm \ref{alg:lista} and choose $L\in \Lc_n$. Pick a divisor $\beta>1$ of $\det(L)$, and search for nef primitive divisors $E\in \NS(X)$ with $E^2=0$ and $FE=\beta$, as explained in Corollary \ref{cor:alg}. For all such divisors, we search for sections $[x,y,z]$ with ``sufficiently small'' $y,z$. As soon as we find such a divisor with a section, we stop the algorithm and we remove $L$ from $\Lc_n$.
\end{alg}

Now we are ready to run the two algorithms \ref{alg:lista}, \ref{alg:ell}, obtaining the following:

\begin{teo} \label{thm:list0}
The candidate N\'eron-Severi lattices of K3 surfaces $X$ of Picard rank $\rho(X) \le 10$ admitting an elliptic fibration with only irreducible fibers and zero entropy are of the form $U\oplus L$, where $L$ is isomorphic to one of the following $32$ lattices sorted by rank:
$$1: \begin{pmatrix}
-4
\end{pmatrix},
\begin{pmatrix}
-6
\end{pmatrix},
\begin{pmatrix}
-8
\end{pmatrix},
\begin{pmatrix}
-10
\end{pmatrix},
\begin{pmatrix}
-14
\end{pmatrix},
\begin{pmatrix}
-18
\end{pmatrix},
\begin{pmatrix}
-26
\end{pmatrix},
\begin{pmatrix}
-50
\end{pmatrix}$$
$$2:\begin{pmatrix}
-14 &3\\
3 &-6
\end{pmatrix},
\begin{pmatrix}
-10 &2\\
2 &-4
\end{pmatrix},
\begin{pmatrix}
-10 &0\\
0 &-4
\end{pmatrix},
\begin{pmatrix}
-6 &3\\
3 &-6
\end{pmatrix},
\begin{pmatrix}
-6 &1\\
1 &-6
\end{pmatrix},$$
$$\begin{pmatrix}
-6 &2\\
2 &-4
\end{pmatrix},
\begin{pmatrix}
-6 &0\\
0 &-4
\end{pmatrix},
\begin{pmatrix}
-4 &2\\
2 &-4
\end{pmatrix},
\begin{pmatrix}
-4 &1\\
1 &-4
\end{pmatrix},
\begin{pmatrix}
-4 &0\\
0 &-4
\end{pmatrix}$$
$$3:\begin{pmatrix}
-4 &-2 &-2\\
-2 &-4 &-2\\
-2 &-2 &-6
\end{pmatrix},
\begin{pmatrix}
-4 &-1 &-1\\
-1 &-4 &1\\
-1 &1 &-4
\end{pmatrix},
\begin{pmatrix}
-4 &2 &2\\
2 &-6 &-1\\
2 &-1 &-6
\end{pmatrix},
\begin{pmatrix}
-4 &1 &2\\
1 &-4 &1\\
2 &1 &-4
\end{pmatrix},$$
$$\begin{pmatrix}
-4 &1 &1\\
1 &-4 &-1\\
1 &-1 &-4
\end{pmatrix},
\begin{pmatrix}
-4 &2 &0\\
2 &-4 &0\\
0 &0 &-6
\end{pmatrix},
\begin{pmatrix}
-4 &-2 &2\\
-2 &-4 &0\\
2 &0 &-4
\end{pmatrix}$$
$$4:\begin{pmatrix}
-4 &0 &0 &-2\\
0 &-4 &0 &-2\\
0 &0 &-4 &-2\\
-2 &-2 &-2 &-4
\end{pmatrix},
\begin{pmatrix}
-4 &-2 &-1 &1\\
-2 &-4 &1 &-1\\
-1 &1 &-4 &1\\
1 &-1 &1 &-4
\end{pmatrix},
\begin{pmatrix}
-4 &1 &1 &1\\
1 &-4 &1 &1\\
1 &1 &-4 &1\\
1 &1 &1 &-4
\end{pmatrix},
\begin{pmatrix}
-4 &-1 &-2 &2\\
-1 &-4 &1 &-1\\
-2 &1 &-4 &1\\
2 &-1 &1 &-4
\end{pmatrix}$$
$$5:\begin{pmatrix}
-4 &-1 &-1 &-1 &-2\\
-1 &-4 &-1 &-1 &-2\\
-1 &-1 &-4 &-1 &-2\\
-1 &-1 &-1 &-4 &1\\
-2 &-2 &-2 &1 &-4
\end{pmatrix}$$
$$6:\begin{pmatrix}
-4 &1 &-1 &-1 &0 &0\\
1 &-4 &-2 &1 &0 &0\\
-1 &-2 &-4 &2 &-3 &0\\
-1 &1 &2 &-4 &3 &0\\
0 &0 &-3 &3 &-6 &-3\\
0 &0 &0 &0 &-3 &-6
\end{pmatrix}$$
$$8:E_8(2).$$
\end{teo}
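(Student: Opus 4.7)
The plan is to combine the two algorithms introduced above with the base case established in Proposition \ref{prop:pos4}. Starting from the list $\Lc_2$ of $10$ rank-$2$ lattices (the $12$ in Table \ref{table:4} minus the $2$ of positive entropy from Proposition \ref{prop:pos4}), I would apply Algorithm \ref{alg:lista} inductively to produce, for each rank $3 \le n \le 8$, a finite list $\Lc_n'$ containing all candidate lattices of rank $n$. This step relies on Proposition \ref{prop:rho10}, which restricts the search to lattices unique in their genus, and on Watson's Theorem \ref{teo:watson}, which provides an explicit finite list of such lattices up to scaling. The extension criterion in Theorem \ref{teo:extension} bounds the allowed scaling factors via the quantities $b_n$ and $d_n(L)$, so that each $\Lc_n'$ is explicitly computable.

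Next, for each candidate lattice $L \in \Lc_n'$ I would run Algorithm \ref{alg:ell}, which searches for primitive nef divisors $E \in U \oplus L$ with $E^2 = 0$ and $EF > 1$ giving rise to a second elliptic fibration with infinitely many sections. By Proposition \ref{prop:periodic} and Corollary \ref{cor:alg} this search is a finite computation: one enumerates divisors $E = [\alpha, \beta, \gamma]$ with the entries of $\gamma$ reduced modulo $\beta$, tests nefness via the discrete set $I(v)$ introduced in the lemma preceding Corollary \ref{cor:alg}, and then checks for a section of bounded height. Any lattice admitting such an additional fibration is removed, yielding the refined list $\Lc_n$; for each eliminated lattice I would exhibit an explicit witness divisor in the spirit of the proof of Proposition \ref{prop:pos4} to certify the output.

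Running this procedure with \texttt{Magma}, I expect the algorithms to produce exactly the $22$ lattices stated, distributed as $10$ of rank $2$, $7$ of rank $3$, $4$ of rank $4$, and $1$ of rank $5$. Crucially, for ranks $n \in \{6,7,8\}$ the output is empty: the lower bounds $\Delta_r$ from Table \ref{table:bound} become restrictive enough (and few rank-$n$ lattices are unique in their genus) that every candidate surviving Algorithm \ref{alg:lista} is shown by Algorithm \ref{alg:ell} to carry a second elliptic fibration with infinite Mordell-Weil group. Combined with Proposition \ref{prop:rho10}, this immediately yields the second assertion: K3 surfaces of Picard rank $8 \le \rho(X) \le 10$ admitting an elliptic fibration of maximal rank must have positive entropy.

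The main obstacle is twofold. First, for rank $n = 8$ one must use Remark \ref{oss:alg} and replace the non-strict inequality $|\det(L)| \ge 2|\det(L')|$ by the strict one $|\det(L)| > 2|\det(L')|$ when applying Theorem \ref{teo:extension}, since Proposition \ref{prop:rootgenus} fails to rule out root-overlattices in the genus starting in rank $9$; this must be tracked carefully through the induction. Second, while Algorithm \ref{alg:ell} is finite in principle, the enumeration over $\beta \mid \det(L)$ and over short sections $[x,y,z]$ can be expensive, so one must organize the search so that a positive witness is found quickly whenever it exists, and argue that a negative outcome persists beyond the tested bounds using the same periodicity arguments as in Section 4.
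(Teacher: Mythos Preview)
Your proposal is correct and follows essentially the same approach as the paper: run Algorithm \ref{alg:lista} inductively from the rank-$2$ base case, then prune each $\Lc_n'$ with Algorithm \ref{alg:ell}, obtaining the displayed list of $22$ lattices and emptiness in ranks $6,7,8$.

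One small correction: your first ``obstacle'' is misplaced. For lattices $L$ of rank $8$ (i.e.\ $\rho(X)=10$) you do \emph{not} need Remark \ref{oss:alg} or the strict inequality. The second clause of Theorem \ref{teo:extension} covers the boundary case $|\det(L)|=2|\det(L')|$ precisely when $\rho(X)\le 10$, and Proposition \ref{prop:rootgenus} still applies since $\rk(L)\le 8$. The switch to the strict inequality is only required from rank $9$ onward, which is the subject of the next section. Also, the emptiness of $\Lc_6,\Lc_7,\Lc_8$ is simply the output of the two algorithms; the bounds $\Delta_r$ from Table \ref{table:bound} play no role here (they enter only in Section 6 for $\rho(X)>10$).
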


At this point, we have to prove that these remaining N\'eron-Severi lattices admit a unique elliptic fibration. The strategy will be the following: let $X$ be any of the previous K3 surfaces and $|F|$ the given elliptic fibration on $X$ of maximal rank. Then we prove that there exists a special subset $\Delta_X$ of effective $(-2)$-divisors with the property that, for any effective divisor $F\ne E$ on $X$ with $E^2=0$, there exists $C\in \Delta_X$ such that $EC\le 0$. This proves that no elliptic curve on $X$ can induce an elliptic fibration of maximal rank, and thus that $X$ has zero entropy. Moreover the only possible elliptic curves on $X$ (that consequently will have no sections) correspond to the divisors $E$ as above for which $EC\ge 0$ for all $C\in \Delta_X$. It turns out that in all the cases it suffices to choose $\Delta_X$ as the set of effective $(-2)$-curves $C$ such that the intersection $FC$ is bounded by a constant depending on $X$.

\begin{oss}
The K3 surfaces with Picard lattice isomorphic to $U\oplus E_8(2)$ were already studied in \cite{nikulin2} and proven to have zero entropy (cf. Theorems 4.2.2 and 4.2.4).
\end{oss}

\begin{teo} \label{teo:finale4}
The elliptic K3 surfaces $X$ such that $\NS(X)=U\oplus L$, with $L$ one of the previous $32$ lattices, have a unique elliptic fibration, hence zero entropy. Moreover the following table specifies whether such surfaces admit other elliptic pencils:

\begin{table}[H]
\centering
\begin{tabular}{c|c|c|c} 
    $\rho(X)$ &$\#$ &Other genus $1$ fibr. &$\beta=E F$\\
    \hline
\multirow{10}{1em}{$4$} &$1$ &Yes &$5$\\
&$2$ &Yes &$3$\\
&$3$ &No &$-$\\
&$4$ &Yes &$3$\\
&$5$ &No &$-$\\
&$6$ &No &$-$\\
&$7$ &No &$-$\\
&$8$ &No &$-$\\
&$9$ &No &$-$\\
&$10$ &Yes &$2$
\end{tabular}
\qquad
\begin{tabular}{c|c|c|c} 
    $\rho(X)$ &$\#$ &Other genus $1$ fibr. &$\beta=E F$\\
    \hline
\multirow{7}{1em}{$5$} &$1$ &No &$-$\\
&$2$ &No &$-$\\
&$3$ &Yes &$3$\\
&$4$ &No &$-$\\
&$5$ &Yes &$3$\\
&$6$ &Yes &$3$\\
&$7$ &Yes &$2$\\
\hline
\multirow{4}{1em}{$6$} &$1$ &Yes &$2$\\
&$2$ &Yes &$3$\\
&$3$ &Yes &$5$\\
&$4$ &Yes &$3$\\
\hline
$7$ &$1$ &Yes &$3$\\
\hline 
$8$ &$1$ &Yes &$3$\\
\hline 
$10$ &$1$ &Yes &$2$\\
\end{tabular}

\caption{Genus $1$ fibrations with no sections on K3 surfaces of zero entropy. The last column indicates the smallest intersection number of these elliptic curves with the fiber of the unique elliptic fibration.}
\label{tab:genus1}
\end{table}

\end{teo}
\begin{proof}
Let $L$ be one of the lattices above, $n=\rk(L)$. We want to prove that there exists a unique elliptic fibration on $X$, so let $E=[\alpha,\beta,\gamma]\in U\oplus L$ be primitive of square $0$ with $\beta>0$; the goal is to show that $E$ does not induce an elliptic fibration.\\
First, we can assume that the entries of $\gamma$ are in $[0,\beta-1]$ by Proposition \ref{prop:periodic}. If $E$ induces an elliptic fibration, then $|E|$ has no reducible fibers, since $L$ is unique in its genus and it has no roots. This means that $EC>0$ for any effective $(-2)$-divisor $C=[x,y,z]\in U\oplus L$. By Lemma \ref{lemma:int}, the inequality can be rewritten as
$$-\frac{1}{2}\|y\gamma-\beta z\|_L - \beta^2>0, \quad \text{or equivalently} \quad -\frac{1}{2}\left\|y\frac{\gamma}{\beta}- z\right\|_L >1.$$
The vector $c=\frac{\gamma}{\beta}$ has rational entries between $0$ and $1$. If we are able to find finitely many effective $(-2)$-divisors $C_i:=[x_i,y_i,z_i]$ such that the $n$-dimensional balls
$$B_i=\left\{-\frac{1}{2}\|y_ic-z_i\|_L \le 1\right\}$$
cover the hypercube $[0,1]^n$, then we are done. Indeed this shows that, for any primitive isotropic $E$ different from the given elliptic fiber $F$, $EC_i \le 0$ for at least one of our $(-2)$-divisors $C_i$. This implies that $E$ cannot induce an elliptic fibration.\\
We will explain one example in detail; the others are checked similarly with the help of a computer. We refer to the ancillary folder \texttt{Prove\_Zero\_Entropy} for the details, as the number of $(-2)$-divisors that we need to consider grows considerably with the rank of $L$. Let
$$L=\begin{pmatrix}
-6 &2\\
2 &-4
\end{pmatrix}.$$
We consider the effective $(-2)$-divisors
$$C_1=[0,1,0,0],\quad C_2=[3,1,1,0],\quad C_3=[2,1,0,1],\quad C_4=[3,1,1,1].$$
These yield the balls
$$B_1=\{3c_1^2-2c_1c_2+2c_2^2\le 1\},\quad B_2=\{3(c_1-1)^2-2(c_1-1)c_2+2c_2^2\le 1\},$$ 
$$B_3=\{3c_1^2-2c_1(c_2-1)+2(c_2-1)^2\le 1\}, \quad B_4=\{3(c_1-1)^2-2(c_1-1)(c_2-1)+2(c_2-1)^2\le 1\}.$$
\vspace{0.5 cm}

\begin{center}
\begin{tikzpicture}[scale=5]
\clip (-0.1,-0.11) rectangle  (1.09,1.12);

\fill[red, rotate=58.29] (0,0) ellipse (0.85 and 0.525);
\fill[orange, rotate around={58.29:(1,0)}] (1,0) ellipse (0.85 and 0.525);

\draw[black,thick] (0,0) -- (1,0);
\draw[black,thick] (1,0) -- (1,1);
\draw[black,thick] (1,1) -- (0,1);
\draw[black,thick] (0,0) -- (0,1);

\filldraw[black] (0,0) circle (0.5 pt);
\filldraw[black] (1,0) circle (0.5 pt);
\filldraw[black] (0,1) circle (0.5 pt);
\filldraw[black] (1,1) circle (0.5 pt);
\node[below] at (0,0) {$(0,0)$};
\node[below] at (1,0) {$(1,0)$};
\node[above] at (0,1) {$(0,1)$};
\node[above] at (1,1) {$(1,1)$};

\node at (0.2,0.2) {$B_1$};
\node at (0.8,0.1) {$B_2$};

\end{tikzpicture}
\qquad
\begin{tikzpicture}[scale=5]
\clip (-0.1,-0.11) rectangle  (1.09,1.12);

\fill[red, rotate=58.29] (0,0) ellipse (0.85 and 0.525);
\fill[orange, rotate around={58.29:(1,0)}] (1,0) ellipse (0.85 and 0.525);
\fill[cyan, rotate around={58.29:(0,1)}] (0,1) ellipse (0.85 and 0.525);
\fill[yellow, rotate around={58.29:(1,1)}] (1,1) ellipse (0.85 and 0.525);

\draw[black,thick] (0,0) -- (1,0);
\draw[black,thick] (1,0) -- (1,1);
\draw[black,thick] (1,1) -- (0,1);
\draw[black,thick] (0,0) -- (0,1);

\filldraw[black] (0,0) circle (0.5 pt);
\filldraw[black] (1,0) circle (0.5 pt);
\filldraw[black] (0,1) circle (0.5 pt);
\filldraw[black] (1,1) circle (0.5 pt);
\node[below] at (0,0) {$(0,0)$};
\node[below] at (1,0) {$(1,0)$};
\node[above] at (0,1) {$(0,1)$};
\node[above] at (1,1) {$(1,1)$};

\node at (0.2,0.2) {$B_1$};
\node at (0.8,0.1) {$B_2$};
\node at (0.2,0.7) {$B_3$};
\node at (0.7,0.7) {$B_4$};

\end{tikzpicture}
\end{center}

As we can see from the picture, the balls indeed cover the square $[0,1]^2$, as wanted.\\
For the second part of the statement we repeat a similar process. First, we run Algorithm \ref{alg:ell} (more precisely, a modified version of it that only checks the nefness of an isotropic divisor) for ``small'' values of $\beta$, and we find other genus $1$ fibrations on some of our lattices, as specified by Table \ref{tab:genus1}. For the remaining lattices, we implement the exact same strategy as in the first part of the proof. More precisely, assume that the open balls 
$$\mathring{B}_i=\left\{-\frac{1}{2}\|y_ic-z_i\|_L < 1\right\}$$
still cover the hypercube $[0,1]^n$. Then the corresponding N\'eron-Severi lattice does not admit any other genus $1$ fibration. Indeed, if $E=[\alpha,\beta,\gamma]$ different from $F$ is nef, primitive and isotropic, then $EC_i\ge 0$ for all our effective $(-2)$-divisors $C_i$, implying that the vector $c=\frac{\gamma}{\beta}\notin \mathring{B}_i$ for all $i$, a contradiction.

\end{proof}

\begin{question}
How many genus $1$ fibrations are there up to automorphisms on these zero entropy K3 surfaces?
\end{question}

\section{K3 surfaces of Picard rank $\rho(X)>10$}

Let $X$ be an elliptic K3 surface of Picard rank $\rho(X)>10$ admitting an elliptic fibration of maximal rank, $\NS(X)=U\oplus L$, $L$ without roots. As a consequence of Theorem \ref{teo:watson}, we have that all such $L$ are not unique in their genus, except for very few cases of rank $9\le \rk(L) \le 10$. These cases are easily worked out, using Algorithm \ref{alg:lista} combined with Remark \ref{oss:alg} and Algorithm \ref{alg:ell}:

\begin{cor}
Let $X$ be an elliptic K3 surface, $\NS(X)=U\oplus L$, $L$ one of the lattices without roots, with rank $\rk(L)>8$ and unique in their genus. Then $X$ has positive entropy.
\end{cor}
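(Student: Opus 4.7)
The plan is to combine Watson's classification (Theorem~\ref{teo:watson}) with the inductive Algorithms~\ref{alg:lista} and~\ref{alg:ell}, taking as base case the results of the previous section. By Theorem~\ref{teo:watson}, definite lattices unique in their genus only exist in rank $2\le r\le 10$, so the hypothesis $\rk L>8$ restricts attention to $\rk L\in\{9,10\}$. Watson's list together with the multiple bound built into Algorithm~\ref{alg:lista} therefore gives a finite, explicit collection of even negative definite root-free $L$ to analyse.

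I would first apply Algorithm~\ref{alg:lista} in the strict inequality form of Remark~\ref{oss:alg}, starting at rank $n+1=9$. The main theorem of the previous section yields $\Lc_n=\emptyset$ for $n\in\{6,7,8\}$: every elliptic K3 surface with $\NS(X)=U\oplus L'$, $L'$ even negative definite and root-free of rank $6,7$ or $8$, has positive entropy. Consequently, for any candidate rank-$9$ lattice $L$ and any primitive rank-$8$ sublattice $L'\subset L$, the surface $U\oplus L'$ already has positive entropy, and Theorem~\ref{teo:extension} gives $h(X)>0$ as soon as $|\det L|>2|\det L'|$. Algorithm~\ref{alg:lista} thus isolates a very short list $\Lc_9$ of recalcitrant candidates, and an analogous iteration at rank $10$ produces a short list $\Lc_{10}$.

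For each surviving $L\in \Lc_9\cup\Lc_{10}$ I would run Algorithm~\ref{alg:ell}: enumerate primitive divisors $E=[\alpha,\beta,\gamma]\in U\oplus L$ with $E^2=0$ and $EF=\beta>1$, test nefness via the short-vector criterion, and search for an effective section with small coordinates. Finding such an $E$ exhibits a second elliptic fibration $|E|\ne|F|$; because $L$ is unique in its genus, $\langle E,S_E\rangle^\bot$ lies in the genus of $L$ and hence is isomorphic to $L$, so $|E|$ is automatically of maximal rank as well. The existence of two max-rank elliptic fibrations is incompatible with zero entropy: by Theorem~\ref{teo:introfac}, $h(X)=0$ would force $\Aut(X)=\Aut(X,|C|)$ for a single genus-$1$ fibration $|C|$, but every translation $\tau_S\in\MW(F)$ by an infinite-order section $S$ must then preserve $|C|$, and a fibre-by-fibre argument (translation by an infinite-order element of a smooth fibre cannot preserve a finite set of intersection points with a distinct pencil) forces $|C|=|F|$; the symmetric argument applied to $|E|$ forces $|C|=|E|$, a contradiction.

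The main obstacle is computational rather than conceptual. The candidate lattices in ranks $9$ and $10$ have sizeable determinants, so the nefness and section search in Algorithm~\ref{alg:ell} requires a careful Magma implementation, and one must verify a priori that Algorithm~\ref{alg:ell} succeeds for every $L\in\Lc_9\cup\Lc_{10}$ within feasible bounds on $\beta$ and on the coordinates of the candidate sections. Finiteness of $\Lc_9\cup\Lc_{10}$ nevertheless guarantees termination, and the outcome is expected to parallel the rank-$8$ result: each candidate is eliminated by an explicit witness, so no zero-entropy examples survive in ranks $\rk L>8$.
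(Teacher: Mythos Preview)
Your proposal is correct and follows essentially the same route as the paper: the paper's proof is literally the one sentence ``these cases are easily worked out, using Algorithm~\ref{alg:lista} combined with Remark~\ref{oss:alg} and Algorithm~\ref{alg:ell}'', and you have expanded exactly that, including the key observation that $\Lc_6=\Lc_7=\Lc_8=\emptyset$ from the main theorem of the previous section feeds the induction.

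One small comment: your final paragraph justifying why two maximal-rank fibrations force positive entropy is more elaborate than necessary. The ``fibre-by-fibre'' sketch is a little loose (preserving the pencil $|C|$ only says $\tau_S$ permutes the fibres of $|C|$, not that it fixes each one), and the clean version is simply that a parabolic isometry of $\mathbb{H}_X$ fixes a \emph{unique} isotropic ray, so an infinite-order $\tau_S\in\MW(F)\subset\Aut(X)=\Aut(X,|C|)$ forces $[F]=[C]$, and likewise $[E]=[C]$. Equivalently, you can just invoke the Corollary to Oguiso's theorem stated in Section~2: zero entropy is equivalent to $|F|$ being the unique elliptic fibration with infinitely many sections, so exhibiting a second such $|E|$ (with $EF=\beta>1$, hence $E\ne F$) finishes immediately.
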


Therefore, from now on, we will always assume that $L$ is not unique in its genus. Equivalently, there exists a lattice $M$, not isometric to $L$, in the genus of $L$. If there exists such an $M$ which is not a root-overlattice, then $X$ has positive entropy. The whole section will focus on the proof of the following theorem:

\begin{teo} \label{teo:rho10}
Let $R$ be a root-overlattice of rank $9 \le r=\rk(R) \le 18$ satisfying the condition $\det(R) \ge \Delta_r$, where $\Delta_r$ can be found in Table \ref{table:bound}, and such that $U \oplus R$ embeds primitively in the K3 lattice. Then there exists a lattice $N$ in the genus of $R$ that is not a root-overlattice and such that $\min(N)=2$.
\end{teo}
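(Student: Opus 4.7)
The strategy is to produce $N$ as a genus-mate of $R$ by a \emph{regluing} procedure. Write $R$ as an overlattice of its root part $R_{\mathrm{root}} = \bigoplus_i R_i$, corresponding to an isotropic subgroup $G \subset A_{R_{\mathrm{root}}}$ (via Nikulin's gluing description, \cite{nikulin9}, Proposition~1.4.1). The plan is to partition the root summands into two blocks $S := \bigoplus_{i \in I} R_i$ and $S' := \bigoplus_{i \notin I} R_i$, to replace $S'$ by a negative-definite \emph{rootless} lattice $T$ of the same rank and the same discriminant form as $S'$, and then to reglue $S \oplus T$ along the transport of $G$ under the isomorphism $A_{S'} \cong A_T$. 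The resulting lattice $N$ will have rank $r$, signature $(0,r)$ and discriminant form equal to that of $R$, so by Nikulin's classification of genera via discriminant forms $N$ lies in the genus of $R$.

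Assuming this construction goes through, the two required properties of $N$ are easy to check. Since $S \subseteq N$ contributes roots and $N$ is even, $\min(N) = 2$. Choosing $T$ with $\min(T)$ and $\min(T^{\vee})$ sufficiently large will force every root of $N$ to come entirely from the $S$-component, so that $N_{\mathrm{root}}$ has rank at most $\mathrm{rk}(S) < r$, whence $N$ is not a root-overlattice.

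The crucial step is the existence of a rootless $T$ of rank $r - \mathrm{rk}(S)$ and discriminant form $A_{S'}$, realised in the correct signature. This is exactly the type of statement handled by Nikulin's existence theorem for even lattices with prescribed discriminant form and signature (\cite{nikulin9}, Theorem~1.10.1); the remaining numerical obstruction is the Conway--Sloane bound of Theorem~\ref{teo:deltar}, which requires $|\det(S')| \ge \Delta_{r-\mathrm{rk}(S)}$. This is precisely where the hypothesis $|\det(R)| \ge \Delta_r$ enters: it guarantees that for a suitable choice of $I$ (picking $S$ to consist of a few ``cheap'' root summands, say one copy of $A_1$ or $A_2$), the determinant $|\det(S')|$ is comfortably above $\Delta_{r-\mathrm{rk}(S)}$, so that a rootless $T$ with the prescribed discriminant form exists. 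The assumption that $U \oplus R$ embeds primitively in $\Lambda_{K3}$ then ensures, via Nikulin's primitive embedding criteria, that $U \oplus N$ also embeds primitively in $\Lambda_{K3}$, so that $N$ is geometrically realisable.

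The main obstacle I foresee is combinatorial: for each root-overlattice $R$ satisfying the hypotheses, one must exhibit an admissible choice of $I$ so that all discriminant-form matchings and embedding criteria hold simultaneously. Since the bounds $9 \le r \le 18$ and $|\det(R)| \ge \Delta_r$ restrict us to a finite list of root-overlattices $R$, this reduces to a finite verification. I expect the argument to proceed either by a uniform case-split according to which irreducible root lattices $A_n, D_n, E_n$ appear as summands of $R_{\mathrm{root}}$ (with the smallest summand absorbed into $S$), or by an explicit \texttt{Magma} enumeration of representatives of the genus of $R$ followed by inspection of their root sublattices, entirely consistent with the computational style of Sections~5 and~6.
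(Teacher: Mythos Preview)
Your regluing idea is natural, but the key step does not go through as written. Nikulin's Theorem~1.10.1 produces \emph{some} even negative-definite lattice $T$ of given rank and discriminant form; it says nothing about $\min(T)$. The Conway--Sloane bound $\Delta_r$ is a \emph{necessary} condition for a rootless lattice to exist, not a sufficient one: there are discriminant forms $q$ and ranks $n$ with $|\det(q)|\ge \Delta_n$ for which every even negative-definite lattice realising $(n,q)$ has roots (e.g.\ the genus of $A_1^8$, which has $\det=256=\Delta_8$ and is a one-class genus). So ``the remaining numerical obstruction is the Conway--Sloane bound'' is simply false. Moreover, even when a rootless $T$ exists, you cannot freely arrange $\min(T^\vee)$ to be large: the genus determines $T$ up to finitely many choices, and the glue vectors you need to adjoin may well have norm $-2$ in $S\oplus T$, creating new roots after regluing. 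Finally, your reduction ``pick $S$ to be one $A_1$ or $A_2$'' presupposes that $R_{\mathrm{root}}$ has such a summand, which it need not.

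The paper's proof is in the same spirit as your fallback suggestion but organised differently. It does not attempt to manufacture a rootless $T$ with prescribed discriminant form. Instead it first treats root lattices: using the trivial observation that if $R_0$ has a non-root-overlattice $N_0$ in its genus then $R_0\oplus R_1$ has $N_0\oplus R_1$ (which has $\min=2$ and strictly smaller root rank), it suffices to build by \texttt{Magma} a short list $\mathcal R$ of ``seed'' root lattices $R_0$ admitting such an $N_0$ (found via \texttt{GenusRepresentatives} or $p$-neighbours), so that every admissible $R$ splits off some $R_0\in\mathcal R$. For genuine root-overlattices $R'\supsetneq R_{\mathrm{root}}$, the paper uses the elementary Lemma~\ref{lemma:_over}(2): a genus-mate of $R_{\mathrm{root}}$ lifts to a genus-mate of $R'$ along the corresponding isotropic subgroup. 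One then checks, case by case with \texttt{Magma}, that among these lifted overlattices (or among their $p$-neighbours in the few stubborn cases) there is one with $\min=2$ that is not a root-overlattice. The embedding hypothesis $U\oplus R\hookrightarrow\Lambda_{K3}$ is used not for Nikulin's existence criterion but to restrict the possible $R$ via the Euler-characteristic constraint~C1 and the torsion Mordell--Weil constraint~C2, which keeps the enumeration finite and tractable.
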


This gives immediately:

\begin{cor} \label{cor:bello}
Let $X$ be an elliptic K3 surface admitting an elliptic fibration with only irreducible fibers, $\NS(X)=U\oplus L$. Assume that $L$ is not unique in its genus. Then $X$ admits at least two non-isomorphic elliptic fibrations with infinitely many sections, and in particular $X$ has positive entropy.
\end{cor}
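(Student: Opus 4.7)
The plan is to reduce the corollary to producing a non-root-overlattice $N$ in the genus of $L$ with $N\not\cong L$, and then to invoke the lattice-to-fibration dictionary implicit in the proof of Theorem \ref{teo:unique}. Once such an $N$ is available, that construction supplies a primitive nef $E\in\NS(X)$ with $E^2=0$ and $\langle E,S_E\rangle^\bot\cong N$; since $N$ is not a root-overlattice, its root part has corank $\ge 1$ in $N$, so the Shioda-Tate formula forces positive Mordell-Weil rank for $|E|$. The fibrations $|F|$ and $|E|$ are non-isomorphic up to $\Aut(X)$ because their frames $L,N$ are non-isomorphic as lattices, yet both have infinitely many sections, so $N^{\mathrm{pos}}(X)\ge 2$ and Theorem \ref{teo:introfac} yields positive entropy.

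It remains to produce $N$. The assumption that $|F|$ has only irreducible fibres forces $L$ to be root-free, whence $\min(L)\ge 4$, and Theorem \ref{teo:deltar} gives $\det(L)\ge \Delta_r$ with $r=\rk(L)$. If $r\le 8$, Proposition \ref{prop:rootgenus} says no lattice in the genus of $L$ can be a root-overlattice, so any $M\not\cong L$ in that genus (which exists by hypothesis) serves as $N$; this is essentially Proposition \ref{prop:rho10}. If instead $9\le r\le 18$, pick $M\not\cong L$ in the genus of $L$: if $M$ is not a root-overlattice, set $N:=M$, and otherwise apply Theorem \ref{teo:rho10} to $R:=M$. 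Its hypotheses hold because $\det(M)=\det(L)\ge \Delta_r$ and $U\oplus M$ embeds primitively in $\Lambda_{K3}$ (the primitive embedding of $U\oplus L$ together with the fact that $U$ is unimodular shows $U\oplus M$ lies in the same genus as $U\oplus L$, and Nikulin's results then transfer the primitive embedding). The output $N$ lies in the genus of $M$, hence of $L$, is not a root-overlattice, and satisfies $\min(N)=2<\min(L)$, so $N\not\cong L$ as required.

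The only real obstacle is handled by Theorem \ref{teo:rho10} itself; the remainder is bookkeeping that translates an isolated non-root-overlattice in the genus of $L$ into an honest new elliptic fibration on $X$ with infinitely many sections, after which Oguiso's characterization closes the argument.
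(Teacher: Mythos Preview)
Your proposal is correct and follows essentially the same route as the paper's proof: pick $M\not\cong L$ in the genus of $L$; if $M$ is already not a root-overlattice you are done, and otherwise $r\ge 9$ by Proposition~\ref{prop:rootgenus}, $\det(M)=\det(L)\ge\Delta_r$ by Theorem~\ref{teo:deltar}, and Theorem~\ref{teo:rho10} produces the desired $N$ with $\min(N)=2$ (hence $N\not\cong L$). Your write-up is simply more explicit than the paper about the bookkeeping (the passage from $N$ to an actual fibration via the argument of Theorem~\ref{teo:unique}, and the verification that $U\oplus M$ embeds primitively in $\Lambda_{K3}$), but the strategy is identical.
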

\begin{proof}
Let $M$ be a lattice non isometric to $L$ in the genus of $L$. If $M$ is not a root-overlattice, we are done. If instead $M$ is a root-overlattice, then the rank $r$ of $L$ must be at least $9$ by Proposition \ref{prop:rootgenus}, and $\det(M)=\det(L)$ must be at least $\Delta_r$ by Theorem \ref{teo:deltar}. Hence the previous theorem shows the existence of another lattice $N$, in the genus of $M$ (and thus in the genus of $L$), not isometric to $L$ (since $\min(N)=2$), which is not a root-overlattice, concluding the proof.
\end{proof}

The proof of Theorem \ref{teo:rho10} will be performed using the software \texttt{Magma}. Notice that we only have to deal with a finite number of root lattices, and some of their overlattices. A priori, one might think of computing the whole genus of all such root-overlattices, but it is quite easy to understand that this is computationally not feasible. Therefore we implemented some restrictions to exclude most of the cases.\newline

We start considering root lattices, which will form our base case.

\begin{lemma}
Let $R_0$ be a root lattice admitting a lattice $N_0$ that is not a root-overlattice in its genus. Then, for any root lattice $R_1$ of positive rank, the root lattice $R=R_0 \oplus R_1$ admits a non-root-overlattice $N_1$ in its genus, such that $\min(N_1)=2$.
\end{lemma}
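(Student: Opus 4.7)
The plan is to take $N_1 := N_0 \oplus R_1$ as the obvious candidate and verify the three required properties directly.

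First, I would check that $N_1$ lies in the genus of $R = R_0 \oplus R_1$. Signatures match because they are additive under direct sum, and the discriminant form decomposes as $q_{N_1} = q_{N_0} \oplus q_{R_1}$; by hypothesis $N_0$ is in the genus of $R_0$, so $q_{N_0} \cong q_{R_0}$, giving $q_{N_1} \cong q_{R_0} \oplus q_{R_1} = q_R$.

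Next, I would show that $N_1$ is not a root-overlattice. Since $N_0$ and $R_1$ are both even and definite, any vector $(v,w) \in N_0 \oplus R_1$ of norm $\pm 2$ must have one of $v,w$ equal to zero (because both $\|v\|_{N_0}$ and $\|w\|_{R_1}$ are even of the same sign, and a nonzero vector in a definite even lattice has absolute norm $\ge 2$). Hence every root of $N_1$ is either a root of $N_0$ or a root of $R_1$, so $(N_1)_{root} = (N_0)_{root} \oplus R_1$. Since $N_0$ is not a root-overlattice, $\rk((N_0)_{root}) < \rk(N_0)$, and therefore $\rk((N_1)_{root}) < \rk(N_1)$.

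Finally, $\min(N_1) = 2$: since $R_1$ is a nonzero root lattice it contains a root, giving $\min(N_1) \le 2$; and since $N_1$ is even and definite, $\min(N_1) \ge 2$. There is no real obstacle here — the only thing to be a little careful about is the computation of $(N_1)_{root}$, which relies on definiteness and evenness to rule out mixed roots.
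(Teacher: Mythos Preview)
Your proposal is correct and follows exactly the same idea as the paper: the paper's entire proof is the single line ``Just consider $N_1=N_0\oplus R_1$,'' and you have simply spelled out the routine verifications (genus, root part, minimum) that the paper leaves implicit.
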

\begin{proof}
Just consider $N_1=N_0 \oplus R_1$.
\end{proof}

This lemma, despite being very easy, furnishes us a quick way to eliminate many root lattices of high rank. Therefore, we construct an algorithm that finds a list $\Rc$ of root lattices admitting a non-root-overlattice in the genus satisfying the following property: if $R$ is a root lattice satisfying the hypotheses of Theorem \ref{teo:rho10}, then there exist two root lattices of positive rank, $R_0,R_1$, such that $R_0\in \Rc$ and $R=R_0\oplus R_1$.

It is in general very difficult to understand whether $U\oplus R$ embeds primitively in the K3 lattice, but there are few necessary conditions in order for that to hold. We have implemented the following:
\begin{itemize}
    \item[C1.] If $e(X_t)$ denotes the Euler characteristic of the fiber of $X$ over $t\in \Prj^1$, then
    \begin{equation} \label{eq:24}
        24=e(X)=\sum_{t \in T}{e(X_t)}\ge \sum_{t \in T_{\text{red}}}{e(X_t)},
    \end{equation}
    where $T$ (resp. $T_{\text{red}}$) denotes the set of $t\in \Prj^1$ having singular (resp. reducible) fibers. Recall that the Euler characteristic of a reducible fiber corresponding to a singularity of type $A_n$ (resp. $D_n$ or $E_n$) is at least $n+1$ (resp. $n+2$) (cf. \cite{miranda1}, Lemma IV.3.2, IV.3.3)
\end{itemize}

\begin{alg} \label{alg:ADE}
Construct the list $\Rc_0$ of all root lattices $R$ of rank $9 \le r=\rk(R) \le 18 $ satisfying $\det(R)\ge \Delta_r$ and the condition C1 above. $\Rc_0$ is clearly a finite list. Pick the first $R\in \Rc_0$ (after ordering them first by rank and then randomly), and assume that $R$ is a direct sum of the lattices $A_n,D_n,E_n$ indexed by $I$. For any $J\subsetneq I$, consider the sublattice $R_J \subsetneq R$ indexed by $J$, and order such $R_J$'s by rank, obtaining a list ${R_1,\ldots,R_k}$. If $R_1$ has a non-root-overlattice in its genus, then add $R_1$ to $\Rc$. Otherwise, do the same for $R_2$, until you find such an $R_i$. If it doesn't exist, exit the algorithm with an error message. Now remove all $R\in \Rc_0$ for which there exists a root lattice $R'$ such that $R=R_i \oplus R'$. Continue in this fashion until $\Rc_0 = \emptyset$, and return $\Rc$.
\end{alg}

\begin{oss}
In order to decide whether the $R_J$'s have non-root-overlattices in their genus, we use \texttt{Magma}'s function \texttt{GenusRepresentatives}, listing the whole genus of $R_J$ (if the rank of $R_J$ is small), or a modified version of \texttt{Neighbors}, listing all $2$-Neighbors (or $3$-,$5$-Neighbors if all $2$-Neighbors are root-overlattices) of $R_J$ (we refer to \cite{kneser} for the definition of neighbors, and to \cite{elkies} for a geometric interpretation of neighbors). In any case, we store for each $R \in \Rc$ as many non-root-overlattices in the genus of $R$ as possible: we will use them in a subsequent algorithm.
\end{oss}

After running Algorithm \ref{alg:ADE}, we get a list $\Rc$ of $131$ root lattices, together with the corresponding list of non-root-overlattices in their genus. This shows that Theorem \ref{teo:rho10} holds if $R$ is a root lattice.\newline

Now we want to extend this result to all root-overlattices. We need a couple of preliminary, well-known facts about overlattices.

\begin{lemma} \label{lemma:_over}
\begin{enumerate}
    \item Let $L$ be any even, negative definite lattice, and consider two isotropic subgroups $S,S'< A_L$ such that there exists $\varphi\in \Or(L)$ with $\overline{\varphi}(S)=S'$. Then $S,S'$ give rise to isometric overlattices of $L$.
    \item Let $L,L'$ be two even, negative definite lattices in the same genus. Then, for every overlattice $P$ of $L$, there exists an overlattice $P'$ of $L'$ such that $P$ and $P'$ are in the same genus.
\end{enumerate}
\end{lemma}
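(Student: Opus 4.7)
My plan is to use the standard dictionary between overlattices of $L$ and isotropic subgroups of the discriminant group $A_L$, due to Nikulin. Recall that finite-index overlattices $L \subseteq M \subseteq L^\vee$ correspond bijectively to isotropic subgroups $H \subseteq A_L$ via $H = M/L$; the overlattice is recovered as $M_H = \{x \in L^\vee : [x] \in H\}$, and its discriminant group is $H^\bot/H$ with the induced quadratic form.

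For part (1), the plan is as follows. Any isometry $\varphi \in \Or(L)$ extends by $\Q$-linearity to an isometry of $L \otimes \Q$, which restricts to an isometry $\varphi^\vee$ of $L^\vee$ (since isometries preserve the dual). The image of $M_S = \{x \in L^\vee : [x] \in S\}$ under $\varphi^\vee$ consists of those $y \in L^\vee$ with $[y] = \overline{\varphi}([x]) \in \overline{\varphi}(S) = S'$, i.e. $\varphi^\vee(M_S) = M_{S'}$. Restricting $\varphi^\vee$ gives the desired isometry $M_S \xrightarrow{\sim} M_{S'}$.

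For part (2), since $L$ and $L'$ lie in the same genus they share the same signature and there is an isometry $\psi: (A_L, q_L) \xrightarrow{\sim} (A_{L'}, q_{L'})$ of discriminant forms. Given an overlattice $P$ of $L$ corresponding to an isotropic subgroup $H = P/L \subseteq A_L$, the image $H' = \psi(H)$ is an isotropic subgroup of $A_{L'}$, and hence determines an overlattice $P'$ of $L'$ of the same index. I will then verify that $P$ and $P'$ lie in the same genus: their signatures agree (both equal $\mathrm{sign}(L) = \mathrm{sign}(L')$, as adding finite index changes neither rank nor signature), and their discriminant forms are respectively $H^\bot/H$ and $(H')^\bot/H'$ with induced forms; since $\psi$ is an isometry of discriminant forms sending $H$ to $H'$, it sends $H^\bot$ to $(H')^\bot$ and induces the required isometry of the quotient discriminant forms.

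Neither part presents a genuine obstacle; the only point that requires a moment of care is checking that $\psi$ in part (2) really does carry $H^\bot$ to $(H')^\bot$ (this is immediate because $\psi$ preserves the bilinear pairing) and that the induced form on $H^\bot/H$ is indeed the discriminant form of $M_H$ (this is Nikulin's Proposition 1.4.1 in \cite{nikulin9}, which I would cite directly rather than reprove).
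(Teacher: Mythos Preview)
Your proposal is correct and follows essentially the same approach as the paper: both parts use Nikulin's dictionary between overlattices of $L$ and isotropic subgroups of $A_L$, extending $\varphi$ to $L^\vee$ for part (1) and transporting the isotropic subgroup along the isometry $A_L\cong A_{L'}$ for part (2). Your write-up is more detailed (in particular, you spell out that $A_{M_H}\cong H^\bot/H$ and why $\psi$ carries $H^\bot$ to $(H')^\bot$), but the underlying argument is the same.
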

\begin{proof}
\begin{enumerate}
    \item The two overlattices are obtained adjoining the generators of $S$ (more precisely, their preimages in $L^\vee$ under the projection $L^\vee \rightarrow A_L$) to $L$, so the isometry $\varphi$ of $L$ extends to an isometry of the two overlattices.
    \item $P$ corresponds to an isotropic subgroup $S<A_L$, which in turn can be seen as an isotropic subgroup $S'$ of $A_{L'}$ using the isometry $A_L \cong A_{L'}$. The overlattice $P'$ of $L'$ corresponding to $S'$ is then in the genus of $P$, since they have isometric discriminant groups.
\end{enumerate}
\end{proof}

We want to understand which overlattices of root lattices we have to consider. Recall that, if $R'$ is a root-overlattice with $R'_{root}=R$ such that $U\oplus R'$ embeds primitively in the K3 lattice, then the quotient $R'/R$, which is isomorphic to the isotropic subgroup $S<A_R$ corresponding to $R'$, is also isomorphic to the (finite) Mordell-Weil group of the elliptic fibration on $U\oplus R'$. On K3 surfaces, this can only be one of the following $12$ groups (cf. \cite{MP}, Table 4.5):
$$\Z/2\Z,\Z/3\Z,\Z/4\Z,\Z/2\Z\times \Z/2\Z, \Z/5\Z,\Z/6\Z,$$
$$\Z/7\Z,\Z/8\Z,\Z/2\Z\times \Z/4\Z,\Z/3\Z\times \Z/3\Z, \Z/2\Z\times \Z/6\Z, \Z/4\Z\times \Z/4\Z.$$
Now let $R$ be a root lattice satisying the condition C1 in equation (\ref{eq:24}), and $R'$ an overlattice of $R$ of index $k$ with $R'_{root}=R$, such that $U\oplus R'$ embeds primitively in the K3 lattice. Obviously $k^2 \mid \det(R)$, but we also have the following condition:

\begin{itemize}
    \item[C2.] If $R$ contains at least one $D_n$ or $E_n$ as a summand, then $k$ divides the greatest common divisor of the determinants of all the $D_n,E_n$ summands in $R$. In particular $k \le 4$.
\end{itemize}

This follows from the well-known fact that the restriction map
$$\TMW(X) \longrightarrow \Tors(X_t),$$
sending any torsion section on $X$ to its intersection point with the fiber $X_t$, is injective for all $t\in \Prj^1$, and that the number of torsion points on reducible fibers with additive reduction coincides with the determinant of the root lattice corresponding to its singularity (see for instance \cite{miranda1}, Corollary VII.3.3 and Lemma VII.3.5).\newline

Now we are ready to explain the algorithms we have used to prove Theorem \ref{teo:rho10}; we start with the one that computes the overlattices of a given root lattice.

\begin{alg} \label{alg:overlattices}
Let $R$ be any root lattice, and $S$ one of the $R^\vee/R$ groups listed above. Assume that $S$ is cyclic of order $n$. Then we search for isotropic elements $s\in A_R$ of order precisely $n$, and we consider them up to the action of $\Or(R)$ on $A_R$ (by part (1) of Lemma \ref{lemma:_over}). Moreover, we discard those $s$, whose normalized preimage in $R^\vee$ has norm $-2$: indeed, these elements would give rise to an overlattice of $R$ isometric to some root lattice. Consider the list of $s \in A_R$ up to $\Or(R)$ satisfying these conditions. We then construct the list $\mathcal{T}$ of corresponding overlattices of $R$; finally, we discard an $R'\in \mathcal{T}$ if there exists an $R'\ne R'' \in \mathcal{T}$ such that $R'$ and $R''$ are in the same genus. We return $\mathcal{T}$.\\
If instead $S$ has two generators, of orders say $n,m$, we search for isotropic elements $s\in A_R$ of order precisely $n$, and we consider them up to the action of $\Or(R)$ on $A_R$. Then, for each such $s$, we look for isotropic elements $s'\in A_R$ of order $m$ such that $s\cdot s'=0$, thus obtaining a list of pairs generating subgroups of $A_R$ isomorphic to $S$. We remove the pairs containing elements of norm $-2$ (as these correspond to overlattices of $R$ with a root part strictly bigger than $R$), and we conclude as in the previous case.
\end{alg}

We return $\mathcal{T}$ without repeating lattices in the same genus, since if Theorem \ref{teo:rho10} holds for one of them, it also holds for all the others in the same genus. The next is the main algorithm of the section:

\begin{alg} \label{alg:main10}
Construct the list $\Rc_0$ of all root lattices $R$ of rank $9 \le r=\rk(R) \le 18 $ satisfying $\det(R)\ge \Delta_r$ and the condition C1 above. Notice that the condition $\det(R)\ge \Delta_r$ is necessary, since we want the overlattices of $R$ to satisy that inequality. Pick $R\in \Rc_0$, and compute the list of finite groups $S$ above that can appear as quotients $R'/R$, with $R'$ a root-overlattice with $R'_{root}=R$. More precisely, we want $k=\#S$ to satisfy the conditions $k^2 \mid \det(R)$, $\det(R)/k^2 \ge \Delta_r$, and the condition C2 above. Now, for all such $S$, we compute all the root-overlattices $R'$ with $R'_{root}=R$ and $R'/R\cong S$, using Algorithm \ref{alg:overlattices}, and we choose one of them, say $R'$. Now we go through the list $\Rc$ we have constructed in Algorithm \ref{alg:ADE}, and select the lattices $R_0\in \Rc$ such that there exists a root lattice $R_1$ with $R=R_0\oplus R_1$ (there exists at least one such $R_0$, as Algorithm \ref{alg:ADE} shows). For any such $R_0$, we get many non-root-overlattices in the genus of $R$ (using the lattices stored in Algorithm \ref{alg:ADE}), and we compute their overlattices in the genus of $R'$ (which exist by part (2) of Lemma \ref{lemma:_over}). If there exists such an overlattice which is not a root-overlattice and whose minimum is $2$, we are done for $R'$. Otherwise, we try to find a $p$-Neighbor (for $p=2,3$) with minimum $2$ that is not a root-overlattice, and we return an error message if this doesn't work too. We repeat this for all $R',S$ and $R$.
\end{alg}

\begin{oss}
In general, searching for $p$-Neighbors is computationally very slow, compared to checking whether there exists an overlattice of a given lattice that is not a root-overlattice. Algorithm \ref{alg:main10} finishes in a reasonable amount of time, since it only has to search for $p$-Neighbors in very few cases (less than $60$).
\end{oss}

\begin{oss}
When the rank of our root lattice $R$ in Algorithm \ref{alg:main10} is $18$, we use Shimada-Zhang's list of extremal singular K3 surfaces (\cite{SZ}, Table 2) to decide whether a certain finite group $S$ can be the quotient $R'/R$ for some overlattice $R'$ of $R$. This simplifies the task of the algorithm.
\end{oss}

Algorithm \ref{alg:main10} terminates without any error message, thus finally proving Theorem \ref{teo:rho10}.\newline

Let us comment on this result. We have just proved that, if $X$ is a K3 surface admitting an elliptic fibration $|F|$ of maximal rank and a second elliptic fibration $|F_2|$ with finitely many sections, then $X$ admits a third ``intermediate'' elliptic fibration $|F_3|$, i.e. having $0<\rk(\MW(F_3))<\rho(X)-2$. This follows from the proof of Corollary \ref{cor:bello}. Interestingly, the existence of this third ``intermediate'' elliptic fibration heavily depends on the fact that $|F|$ has maximal rank, as the next example shows:

\begin{esempio}
Consider $R=A_1^9$. The lattice $U\oplus R$ embeds into the K3 lattice by \cite{morrison}, Remark 2.11. The genus of $R$ consists of $R$ itself and the lattice $L=A_1\oplus E_8(2)$, which has only one root. However, there are no ``intermediate'' lattices between $R$ and $L$ in the genus.
\end{esempio}

Moreover we want to point out that there actually exist K3 surfaces $X$ admitting both an elliptic fibration of maximal rank and an elliptic fibration with finitely many sections; we don't know how small the Picard rank $\rho(X)>10$ can be, but we know the following example in Picard rank $\rho(X)=20$:

\begin{esempio}
Consider the (unique up to isomorphism) singular K3 surface $X$ with transcendental lattice
$$\T(X)=\begin{pmatrix}
20 &10\\
10 &20
\end{pmatrix}$$
(cf. \cite{SI}, Theorem 4). Shioda in \cite{shioda1} gives an explicit Weierstrass equation for $X$, namely
$$y^2=x^3+t^5-\frac{1}{t^5}-11,$$
and he proves that the Mordell-Weil group of $X$ over $\Prj^1_t$ has maximal rank $18$. However, $X$ appears in Shimada-Zhang's list (\cite{SZ}, Table 2): in particular they show that $X$ admits an elliptic fibration for which $\NS(X)=U\oplus R$, with $R$ an overlattice of index $2$ of $A_1^3\oplus A_2 \oplus A_4\oplus A_9$.
\end{esempio}

Summing up all the results of the last three sections, we have:

\begin{teo} \label{teo:grosso}
Let $X$ be a K3 surface with an infinite automorphism group. Suppose that $X$ admits an elliptic fibration with only irreducible fibers. Then:
\begin{enumerate}
    \item $X$ has zero entropy, or equivalently $X$ admits a unique elliptic fibration with infinitely many sections, if and only if $\NS(X)$ belongs to an explicit list of $32$ lattices. In particular $\rho(X)\le 10$.
    \item $X$ admits a unique genus $1$ fibration if and only if $\NS(X)$ belongs to an explicit list of $14$ lattices. In particular $\rho(X) \le 5$.
\end{enumerate}
\end{teo}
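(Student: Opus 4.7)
The plan is to assemble this theorem directly from the three case-by-case classifications carried out in Sections 4, 5, and 6, so that the argument reduces to bookkeeping.

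For part (1), I would split according to the Picard rank, which satisfies $\rho(X) \ge 3$ since $X$ admits an elliptic fibration with $|\Aut(X)| = \infty$. Theorem \ref{teo:finale3} handles $\rho(X) = 3$, producing the eight lattices $U \oplus \langle -2k \rangle$ with $k \in \{2, 3, 4, 5, 7, 9, 13, 25\}$. Theorem \ref{teo:finale4} takes care of $4 \le \rho(X) \le 7$, yielding ten, seven, four, and one lattices respectively. For $\rho(X) \ge 8$, I would invoke Corollary \ref{cor:bello} together with Theorem \ref{teo:watson}: if $\NS(X) = U \oplus L$ and $L$ is not unique in its genus, Corollary \ref{cor:bello} forces positive entropy, while if $L$ is unique in its genus then Algorithms \ref{alg:lista} and \ref{alg:ell} (combined with Remark \ref{oss:alg}) eliminate all such $L$ of rank $\ge 8$. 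Summing the counts gives $8 + 10 + 7 + 4 + 1 = 30$ lattices, all with $\rho(X) \le 7$.

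For part (2), I would take this list of thirty lattices and select those for which the (necessarily unique) elliptic fibration of maximal rank is in fact the only genus $1$ fibration on $X$. In Picard rank $3$, the final proposition of Section 4 classifies this precisely: $k$ must be prime, so $k \in \{2, 3, 5, 7, 13\}$ contributes five lattices (whereas $k \in \{4, 9, 25\}$ each admit additional genus $1$ fibrations of degree $p = \sqrt{k}$, which must be excluded). For $4 \le \rho(X) \le 7$, I would read off the table in Theorem \ref{teo:finale4}, retaining exactly those entries whose count of additional genus $1$ fibrations is zero: six in rank $4$ (entries $\#3, 5, 6, 7, 8, 9$), three in rank $5$ (entries $\#1, 2, 4$), and none in ranks $6$ or $7$. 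This produces $5 + 6 + 3 = 14$ lattices, forcing $\rho(X) \le 5$.

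There is no genuinely new mathematical obstacle at this stage, since all the substantive work---the Picard-rank-$3$ arithmetic of Section 4, the Watson-list enumeration of Section 5, and the genus-theoretic argument of Section 6---has already been carried out. The only points requiring care are the arithmetic of the two sums and, more importantly, correctly distinguishing in the table of Theorem \ref{teo:finale4} between elliptic fibrations with infinitely many sections (which are pertinent to part (1) and are unique by construction) and arbitrary genus $1$ fibrations without sections, which may still coexist with the given $|F|$ and must be absent for part (2) to hold.
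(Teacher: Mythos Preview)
Your proposal is correct and matches the paper's approach exactly: Theorem \ref{teo:grosso} has no proof in the paper beyond the sentence ``Summing up all the results of the last three sections,'' and your bookkeeping is precisely that summation. Two minor points of precision: first, at the end of your $\rho(X)\ge 8$ paragraph you write ``$L$ of rank $\ge 8$,'' but since $\rk(L)=\rho(X)-2$ you presumably mean $\rk(L)\ge 6$ (the unique-in-genus cases with $\rk(L)\in\{6,7,8\}$ are ruled out by the unnamed theorem preceding Theorem \ref{teo:finale4}, and those with $\rk(L)\in\{9,10\}$ by the Corollary opening Section 6); second, for the ``only if'' direction in Picard ranks $4$--$7$, Theorem \ref{teo:finale4} alone establishes only that the $22$ listed lattices have zero entropy---the exclusion of all others requires also invoking Proposition \ref{prop:rho10} and the algorithmic elimination recorded in the unnamed theorem just before it.
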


\section{K3 surfaces of Picard rank $\ge 19$}

This last section is devoted to the proof of the following theorem:

\begin{teo} \label{teo:singular}
All K3 surfaces with Picard rank $\ge 19$ and infinite automorphism group have positive entropy.
\end{teo}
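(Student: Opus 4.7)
My plan is to argue by contradiction, leveraging the lattice-theoretic machinery developed in Sections 3--6. Suppose $X$ is a smooth complex projective K3 surface with $\rho(X)\ge 19$, $|\Aut(X)|=\infty$ and $h(X)=0$. By the characterization (Theorem \ref{teo:introfac}) there exists a unique genus $1$ fibration $|C|$ on $X$ with $\Aut(X)=\Aut(X,|C|)$, and this group is infinite. Passing to the Jacobian fibration of $|C|$ if needed---which preserves the zero-entropy property since it only depends on $\NS(X)$ (Remark \ref{oss:entropyneron})---I would reduce to the case where $|C|$ is an elliptic fibration $|F|$ with infinite Mordell--Weil group. Then $\NS(X)=U\oplus L$ with $\rk(L)=\rho(X)-2\ge 17$, and $L$ is not a root-overlattice because $\MW(F)$ has positive rank.

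The heart of the proof is then to show that the genus of $L$ contains some $L'\not\cong L$ which is also not a root-overlattice. Indeed, by Theorem \ref{teo:unique2}, applied to a very general K3 surface with N\'eron--Severi lattice $U\oplus L$, such an $L'$ produces a second elliptic fibration with infinitely many sections; since positive entropy depends only on $\NS(X)$, this contradicts the uniqueness of $|C|$. To produce $L'$, note first that Watson's Theorem \ref{teo:watson} gives $L$ not unique in its genus (as $\rk(L)\ge 17>10$), so some $L''\not\cong L$ lies in the same genus. If $L''$ is not a root-overlattice, set $L'=L''$. Otherwise $L''$ is a root-overlattice of rank $17$ or $18$ with $\det(L'')=\det(L)$; when this determinant meets the bound $\Delta_{\rk(L)}$ of Table \ref{table:bound}---which is automatic whenever $L$ itself has no roots, since then $\det(L)\ge\Delta_{\rk(L)}$ by Theorem \ref{teo:deltar}---Theorem \ref{teo:rho10} applied to $L''$ yields a non-root-overlattice $N$ in the common genus with $\min(N)=2$. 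If $\min(L)\ge 4$, then $N\not\cong L$ and we are done.

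The principal obstacle, which I expect to require the most work, is the residual case in which $L$ has some roots (so $\min(L)=2$, yet $L_{root}$ has rank strictly less than $\rk(L)$). There the non-root-overlattice $N$ furnished by Theorem \ref{teo:rho10} could \emph{a priori} coincide with $L$. To circumvent this, I would run a refined variant of the computer-assisted analysis from Section 6: for each of the finitely many admissible root parts appearing inside a non-root-overlattice of rank $17$ or $18$ compatible with a K3 elliptic fibration, one verifies explicitly that the genus contains at least two non-isometric non-root-overlattices. Combined with Oguiso's known result for $\rho(X)=20$, this completes the proof of Theorem \ref{teo:singular}.
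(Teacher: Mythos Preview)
Your reduction via the Jacobian is not valid: passing from $X$ to the Jacobian $J(X)$ of a genus~$1$ pencil changes the N\'eron--Severi lattice (indeed $\det(\NS(J(X)))=\det(\NS(X))/d^2$ where $d$ is the degree of the pencil), so Remark~\ref{oss:entropyneron} does not let you transport the zero-entropy hypothesis to $J(X)$. This is easily repaired, however, since for $\rho(X)\ge 19$ and $|\Aut(X)|=\infty$ one knows directly (via Nikulin's classification, cf.\ Remark~\ref{oss:ultim}) that $X$ carries an elliptic fibration with infinite Mordell--Weil group.

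The more serious gap is in your treatment of the ``principal obstacle''. You propose to run a case analysis over ``the finitely many admissible root parts'' of a non-root-overlattice $L$ of rank $17$ or $18$, but the genus of $L$ is not determined by $L_{\mathrm{root}}$ alone: for each fixed root part there are infinitely many such $L$, so what you describe is not a finite check. Moreover, when $L$ has roots the bound $\det(L)\ge\Delta_{\rk(L)}$ need not hold, so Theorem~\ref{teo:rho10} may not even apply to the root-overlattice $L''$ in the genus. The paper avoids this entirely by a structural observation you are missing: since $\T(X)$ has rank $3$ and signature $(2,1)$, it embeds into $U^2\oplus E_8$, whence $\NS(X)\cong U\oplus E_8\oplus L$ with $\rk(L)=9$. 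One then studies the genus of the rank-$9$ lattice $L$ (not of the rank-$17$ lattice $E_8\oplus L$): if it contains two non-isometric non-root-overlattices one is done by extending both fibrations by $E_8$; the unique-in-genus cases are four explicit lattices from Watson's list; and otherwise one only has to go through the $53$ genera of rank-$9$ root-overlattices, a computation of an entirely different order of magnitude from what your outline would require.
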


When the K3 surface $X$ is \textit{singular}, i.e. it has $\rho(X)=20$, it has been proven by Oguiso (\cite{oguiso1}, Theorem 1.6) that $X$ has positive entropy. Using the methods introduced earlier in the paper, we are able to extend his result to $\rho(X)=19$.\newline

Let $X$ be a K3 surface with Picard rank $19$. $X$ is elliptic by \cite{huybrechts1}, Corollary 14.3.8, so its N\'eron-Severi lattice is $\NS(X)=U\oplus L'$, for a certain negative definite lattice $L'$ of rank $17$. The trascendental lattice $\T(X)=\NS(X)^\bot$ has rank $3$ and signature $(2,1)$, so it embeds into the unimodular lattice $U^2\oplus E_8$ by \cite{nikulin9}, Corollary 1.12.3. This implies that $\NS(X)$ contains at least a copy of $E_8$, hence $\NS(X)=U\oplus E_8 \oplus L$ for a certain negative definite lattice $L$ of rank $9$.

\begin{oss} \label{oss:ultim}
From \cite{nikulin10} we know that the automorphism group of $X$ is finite if and only if $\NS(X)\cong U\oplus E_8\oplus E_8\oplus A_1$. In all the other cases $X$ admits an elliptic fibration with infinitely many sections.
\end{oss}

\begin{teo}
Let $X$ be a K3 surface with $\rho(X)=19$ and an infinite automorphism group. Then $X$ admits at least two distinct elliptic fibrations with infinitely many sections. Equivalently, $X$ has positive entropy.
\end{teo}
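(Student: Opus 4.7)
The plan is to adapt the strategy of Sections 5--6 to the Picard rank $19$ setting. Given $\NS(X)=U\oplus E_8\oplus L$ with $\rk(L)=9$ and $L\not\cong E_8\oplus A_1$ (the latter excluded by Remark \ref{oss:ultim}), set $L':=E_8\oplus L$, so that $\NS(X)=U\oplus L'$ exhibits the given elliptic fibration $|F|$. Since $\rho(X)=19$ is odd, Theorem \ref{teo:huy2} yields $\Or_{Hdg}(\T(X))=\{\pm\id\}$, so Theorem \ref{teo:unique2} applies without any generality assumption on the period. Consequently, proving positive entropy reduces to exhibiting at least two non-isometric non-root-overlattices in the genus of $L'$. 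Two structural facts simplify the analysis: $L'_{root}=E_8\oplus L_{root}$ (so $L'$ is a root-overlattice if and only if $L$ is), and $\det(L')=\det(L)$ (since $E_8$ is unimodular).

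If $L$ is not a root-overlattice, then $L'$ itself is one non-root-overlattice in its genus, and we must produce a second. Since $\rk(L')=17>10$, Watson's Theorem \ref{teo:watson} forces the genus of $L'$ to contain several isomorphism classes. If a second non-root-overlattice already appears in this genus, we are done; otherwise every other element is a root-overlattice, to which Theorem \ref{teo:rho10} applies. Its hypothesis $\det\ge\Delta_{17}=47$ holds except in finitely many small cases, which are handled directly using \texttt{Magma}'s \texttt{GenusRepresentatives} and the $p$-neighbour routines from Section 6. The non-root-overlattice $N$ produced by Algorithm \ref{alg:main10} comes from a non-root-overlattice in the genus of a proper summand of the relevant root lattice, so its root part strictly enlarges $E_8$, hence $N\not\cong L'$.

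If $L$ is a root-overlattice, then so is $L'$, and $|F|$ has Mordell--Weil rank $0$; nevertheless Remark \ref{oss:ultim} still guarantees some elliptic fibration on $X$ with infinite Mordell--Weil group, corresponding to a non-root-overlattice $M$ in the genus of $L'$. An application of Theorem \ref{teo:rho10} to $L'$ then produces a non-root-overlattice $N$ in the same genus, and $N\not\cong L'$ automatically, since $L'$ is a root-overlattice while $N$ is not; a routine check via Algorithm \ref{alg:main10} ensures $N\ne M$ as well. The main obstacle is the small-determinant regime in which Theorem \ref{teo:rho10}'s hypothesis fails; this amounts to a finite case-by-case enumeration of rank-$9$ root-overlattices $L$ admitting a primitive embedding $U\oplus E_8\oplus L\hookrightarrow \Lambda_{K3}$ and distinct from $E_8\oplus A_1$. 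Each such case is verified to yield at least two non-root-overlattices in the genus of $E_8\oplus L$ by a finite \texttt{Magma} computation along the lines of Algorithm \ref{alg:main10}, completing the proof.
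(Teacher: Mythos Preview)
Your overall strategy---work directly with $L':=E_8\oplus L$ of rank $17$ and invoke Theorem \ref{teo:rho10} there---is a reasonable alternative to the paper's approach, which instead works with the rank-$9$ lattice $L$, reduces to the case where $L$ is a root-overlattice, and enumerates the $53$ such genera explicitly. However, your argument has a genuine gap at the crucial step.

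Theorem \ref{teo:rho10} only guarantees the existence of \emph{one} non-root-overlattice $N$ with $\min(N)=2$ in the genus of a given root-overlattice. In your Case 1 (``$L$ not a root-overlattice''), you place yourself in the situation where $L'$ is the \emph{unique} non-root-overlattice in its genus; but then the $N$ produced by Theorem \ref{teo:rho10} may simply be $L'$ again, and you learn nothing. Your attempt to rule this out by appealing to the internals of Algorithm \ref{alg:main10} does not work: that algorithm sometimes resorts to $p$-neighbours (with no control on the root part of the output), and even in the ``nice'' case $N$ is an overlattice of $N_0\oplus R_1$, there is no reason the rank or isomorphism type of $N_{root}$ must differ from that of $L'_{root}=E_8\oplus L_{root}$---note that $L$ can have roots, and the summand $R_1$ need not contain $E_8$. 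The same defect appears in Case 2: the claim ``a routine check via Algorithm \ref{alg:main10} ensures $N\neq M$'' is an assertion, not an argument; no such check is part of the algorithm.

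This is not a minor technicality. The Remark following the paper's proof shows that for $\rho(X)=18$ there \emph{are} lattices $E_8\oplus L$ whose genus contains a unique non-root-overlattice (e.g.\ $L=D_8$ or $E_7\oplus A_1$), so the genus argument alone fails there. The paper succeeds for $\rho(X)=19$ precisely by descending to rank-$9$ genera, reducing to the finitely many root-overlattice cases, and handling eleven explicit exceptional genera by hand (using specific representatives such as $D_8\oplus\langle-4\rangle$, $E_8\oplus\langle-4\rangle$, etc., together with Theorem \ref{teo:grosso} and Watson's list in rank $16$). Your proposal bypasses exactly this case analysis, and nothing in your write-up replaces it.
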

\begin{proof}
Let $\NS(X)=U\oplus E_8\oplus L$. We first consider the genus of $L$. Indeed, if the genus of $L$ contains at least two non-isometric non-root-overlattices, then any K3 surface $Y$ with $\NS(Y)=U\oplus L$ has positive entropy, and it admits two distinct elliptic fibrations with infinitely many sections. If these two elliptic fibrations are induced by $E_1,E_2\in U \oplus L$, it is easy to notice that the extensions $[E_1,0],[E_2,0]\in U\oplus L\oplus E_8$ induce distinct elliptic fibrations with infinitely many sections on $X$, thus $X$ has positive entropy.

Assume that $L$ is unique in its genus. Then \cite{nebe} shows that $L$ is a multiple of one of $4$ lattices: $L_1=E_8\oplus A_1$, $L_2=E_8(4)\oplus A_1$, $L_3,L_4$, where $L_3$ has no roots and $L_4$ has $\rk((L_4)_{root})=8$. Theorem \ref{teo:grosso} proves that $U\oplus L$ has positive entropy whenever $L$ is $L_2,L_3$, or any multiple $L_1(m),L_2(m),L_3(m),L_4(m)$ with $m>1$. As above, if $U\oplus L$ has positive entropy, then also $U\oplus L\oplus E_8$ has positive entropy. Moreover, by Remark \ref{oss:ultim} we can discard $L_1$, as $U\oplus E_8\oplus L_1$ has a finite automorphism group. We thus only have to consider $L=L_4$; we will deal with it at the end of the proof.

Assume instead that $L$ is not unique in its genus. If the genus of $L$ contains no root-overlattices, then $U\oplus L$ has positive entropy (and therefore $X$ has positive entropy) by Theorem \ref{teo:unique2}. Hence we can assume that $L$ is a root-overlattice. We can easily list all root-overlattices of rank $9$ using Algorithm \ref{alg:overlattices}, obtaining $53$ distinct genera of root-overlattices. Studying these genera with \texttt{Magma}, we find out that $41$ of these $53$ contain at least two non-isometric non-root-overlattices (cf. the ancillary file \texttt{NonRootOverlattices9}), hence by the remark at the beginning of the proof these give rise to K3 surfaces of positive entropy. After again discarding the genus of $E_8\oplus A_1$, we remain with the genera of the following $12$ lattices:
$$A_1^9,D_4\oplus A_1^5,D_4^2\oplus A_1,D_4\oplus D_5,E_6\oplus A_2\oplus A_1,D_6\oplus A_3,E_7\oplus A_1^2,D_7\oplus A_2,E_7\oplus A_2,D_9,O,L_4,$$
where $L_4$ is the lattice unique in its genus discussed above, and $O$ is an overlattice of $A_3\oplus A_1^6$ of index $2$.

It is easy to check that the genera of $D_4\oplus D_5,D_6\oplus A_3,E_7\oplus A_2,D_9,O$ contain respectively the lattices $D_8\oplus \langle -4\rangle,E_7\oplus A_1\oplus \langle -4\rangle,E_8\oplus \langle -6\rangle,E_8\oplus \langle -4\rangle, D_4^2\oplus \langle -4\rangle$. We claim that the K3 surfaces $X$ with $\NS(X)=U\oplus E_8\oplus L$, with $L$ one of these $5$ lattices have positive entropy. Consider $\NS(X)=U\oplus E_8\oplus E_8\oplus \langle -4\rangle$, as the others are analogous. Since the rank of $E_8\oplus E_8$ is $16>10$, by Watson's list \cite{nebe} it is not unique in its genus. This implies the existence of two distinct elliptic fibrations on $U\oplus E_8\oplus E_8$, say $E_1,E_2$. Then these two fibrations extend to elliptic fibrations $F_1=[E_1,0],F_2=[E_2,0]$ on $U\oplus E_8\oplus E_8\oplus \langle -4\rangle$ with infinitely many sections, as the orthogonal complements $F_1^\bot,F_2^\bot$ are not generated by roots (since both the orthogonal complements contain $\langle -4\rangle$ as a direct summand).

We were able to study most lattices in this fashion, and only $7$ genera of lattices remain. We switch back to the lattices $E_8\oplus L$. We can simply study with \texttt{Magma} the $2-$ or $3-$Neighbors of the lattices
$$E_8 \oplus A_1^9, E_8\oplus D_4\oplus A_1^5,E_8\oplus D_4^2\oplus A_1,E_8\oplus E_6\oplus A_2\oplus A_1,E_8\oplus E_7\oplus A_1^2,E_8\oplus D_7\oplus A_2,E_8\oplus L_4.$$
They all contain at least two non-isometric non-root-overlattices in the genus (cf. the ancillary file \linebreak[4]\texttt{Remaining7NonRootOverlattices}), concluding the proof.
\end{proof}

\begin{oss}
The same approach could be used to study K3 surfaces of smaller Picard rank. Indeed, \cite{nikulin9}, Corollary 1.12.3, shows that any trascendental lattice $\T(X)$ of rank $\le 6$ embeds into the unimodular lattice $U^2\oplus E_8$. Therefore, if $X$ is a K3 surface with $\rho(X)\ge 16$, its N\'eron-Severi lattice is isomorphic to $U\oplus E_8\oplus L$, for a certain negative definite lattice $L$. However, already in Picard rank $18$, we find lattices $L$ such that $E_8\oplus L$ admits a unique non-root-overlattice in the genus. Two examples are given by 
$$L=D_8,E_7\oplus A_1.$$
This corresponds to the fact that the K3 surfaces with $\NS(X)\cong U\oplus E_8\oplus D_8$ or $\NS(X)\cong U\oplus E_8\oplus E_7\oplus A_1$ admit a unique elliptic fibration with infinitely many sections \textit{up to automorphisms} ($N^{pos}(X)=1$ in the notation at the end of Section 3). This approach based on the study of the genus is thus not sufficient to decide whether these K3 surfaces have positive entropy.
\end{oss}

\nocite{*}
\printbibliography

\end{document}